\let\ssection=\section
\renewcommand{\section}{\setcounter{equation}{0}\ssection}
\theoremstyle{plain}
\newtheorem{thm}{Theorem}%[section]
\newtheorem{lem}{Lemma}[section]
\newtheorem{cor}[lem]{Corollary}
\newtheorem{prop}[lem]{Proposition}
\theoremstyle{definition}
\newtheorem{defi}[lem]{Definition}
\newtheorem{rem}[lem]{Remark}
\newtheorem{ex}[lem]{Example}
\newcommand{\R}{\mathbb{R}}
\newcommand{\Z}{\mathbb{Z}}
\newcommand{\C}{\mathbb{C}}
\newcommand{\bbH}{\mathbb{H}}
\newcommand{\bbM}{\mathbb{M}}
\newcommand{\bbO}{\mathbb{O}}
\newcommand{\bbS}{\mathbb{S}}
\newcommand{\bbK}{\mathbb{K}}
\newcommand{\tA}{\mathbb{K}\left[(\Z_2)^n\right]}
\newcommand{\Cc}{\mathcal{C}}
\newcommand{\A}{\mathcal{A}}
\newcommand{\I}{\mathcal{I}}
\newcommand{\cN}{\mathcal{N}}
\newcommand{\Cl}{\mathit{C}\ell}
\newcommand{\GL}{\mathrm{GL}}
\newcommand{\Lc}{\mathcal{L}} 
\newcommand{\Zc}{\mathcal{Z}}
\newcommand{\gS}{\mathfrak{S}}
\newcommand{\half}{\frac{1}{2}}
\def\a{\alpha}
\def\b{\beta}
\def\d{\delta}
\def\g{\gamma}
\def\r{\rho}
\def\s{\sigma}
\def\l{\lambda}
\begin{document}

\title[A series of algebras generalizing the octonions...]{A series of algebras
generalizing the octonions and Hurwitz-Radon identity}

\author{Sophie Morier-Genoud
\and
Valentin Ovsienko}

\address{
Sophie Morier-Genoud,
Institut Math\'e\-ma\-tiques de Jussieu,
UMR 7586,
Universit\'e Pierre et Marie Curie Paris VI,
4 place Jussieu, case 247,
75252 Paris Cedex 05, France}

\address{
Valentin Ovsienko,
CNRS,
Institut Camille Jordan,
Universit\'e Claude Bernard Lyon~1,
43 boulevard du 11 novembre 1918,
69622 Villeurbanne cedex,
France}

\email{sophiemg@math.jussieu.fr,
ovsienko@math.univ-lyon1.fr}

\date{}

\begin{abstract}
We study non-associative twisted group algebras over $(\Z_2)^n$
with cubic twisting functions.
We construct a series of algebras that
extend the classical algebra of octonions
in the same way as the Clifford algebras extend the algebra of quaternions.
We study their properties, give several equivalent definitions and
prove their uniqueness within some natural assumptions.
We then prove a simplicity criterion.

We present two applications of the constructed algebras and the developed technique.
The first application is a simple explicit formula for the following famous square identity:
$(a_1^2+\cdots+a_{N}^2)\,(b_1^2+\cdots+b_{\rho(N)}^2)=
c_1^2+\cdots+c_{N}^2$, where $c_k$ are bilinear functions
of the $a_i$ and $b_j$ and where $\rho(N)$ is the Hurwitz-Radon function.
The second application is the relation to Moufang loops and, in particular,
to the code loops.
To illustrate this relation, we provide an explicit coordinate formula for the factor set
of the Parker loop.
\end{abstract}

\maketitle

{\bf Key Words}: 
Graded commutative algebras,
non-associative algebras,
Clifford algebras,
octonions,
square identities,
Hurwitz-Radon function,
code loop, Parker loop.

\medskip

{\bf Mathematics Subject Classification (2010)}:
16W50, 15A66,  11E25, 94B25.

\thispagestyle{empty}

%\vspace{-1cm}

%\newpage

\tableofcontents

%%%%%%%%%%%%%%%%%%%%%%%%%%%%%%%%%
\section{Introduction}
%%%%%%%%%%%%%%%%%%%%%%%%%%%%%%%%%

The starting idea of this work is the following naive question:
\textit{is there a natural way to multiply $n$-tuples of $0$ and $1$}?

Of course, it is easy to find such algebraic structures.
The abelian group $\left(\Z_2\right)^n$ provides such a multiplication,
but the corresponding group algebra $\bbK\left[(\Z_2)^n\right]$, 
over any field of scalars $\bbK$,
is not a simple algebra. 
A much more interesting algebraic structure on
$\bbK\left[(\Z_2)^n\right]$ is given by the twisted product 
\begin{equation}
\label{TwistProd}
u_x\cdot{}u_y=\left(-1\right)^{f(x,y)}
u_{x+y},
\end{equation}
where $x,y\in\left(\Z_2\right)^n$
and $f$ is a two-argument function on $\left(\Z_2\right)^n$
with values in $\Z_2\cong\{0,1\}$.
We use the standard notations $u_x$ for the element of $\bbK\left[(\Z_2)^n\right]$
corresponding to $x\in\left(\Z_2\right)^n$.
The only difference between the above product and
that of the group algebra $\bbK\left[(\Z_2)^n\right]$ is
the sign.
Yet, the structure of the algebra changes completely.
Throughout the paper the ground field $\bbK$ is assumed to be $\R$ or $\C$ 
(although many results hold for an arbitrary field of characteristic $\not=2$).

Remarkably enough, the classical Clifford algebras can be obtained
as twisted group algebras.
The first example is
the algebra of quaternions, $\bbH$.
This example was found by many authors but probably first in \cite{Lyc}.
The algebra $\bbH$ is a twisted $(\Z_2)^2$-algebra.
More precisely, consider the 4-dimensional vector space over $\R$ spanned by
$(0,0),\,(0,1),\,(1,0)$ and $(1,1)$ with the multiplication:
$$
u_{\left(x_1,x_2\right)}\cdot{}u_{\left(y_1,y_2\right)}=
\left(-1\right)^{x_1y_1+x_1y_2+x_2y_2}
u_{\left(x_1+y_1,\,x_2+y_2\right)}.
$$
It is easy to check that
the obtained twisted $(\Z_2)^2$-algebra is, indeed, isomorphic to $\bbH$,
see also~\cite{OM} for a different grading on the quaternions (over $(\Z_2)^3$).

Along the same lines, a Clifford algebra with $n$ generators,
is a $\left(\Z_2\right)^n$-graded algebra, see \cite{AM1}.
The (complex) Clifford algebra $\Cl_n$ is
isomorphic to the twisted group algebras over $\left(\Z_2\right)^n$
with the product
\begin{equation}
\label{Cliffunet}
u_{(x_1,\ldots,x_n)}\cdot{}u_{(y_1,\ldots,y_n)}=
\left(-1\right)^
{\sum_{1\leq{}i \leq{}j\leq{}n}x_iy_j}
u_{(x_1+y_1,\ldots,x_n+y_n)},
\end{equation}
where $(x_1,\ldots,x_n)$ is an $n$-tuple of $0$ and $1$.
The above twisting function is bilinear and therefore is a
2-cocycle on $\left(\Z_2\right)^n$.
The real Clifford algebras $\Cl_{p,q}$ are also twisted group algebras over $\left(\Z_2\right)^n$,
where $n=p+q$.
The twisting function~$f$ in the real case contains an extra term
$\sum_{1\leq{}i\leq{}p}x_iy_i$ corresponding to the signature (see Section \ref{SigSec}).

\begin{figure}[hbtp]
\begin{center}
\psfragscanon
\includegraphics[width=6cm]{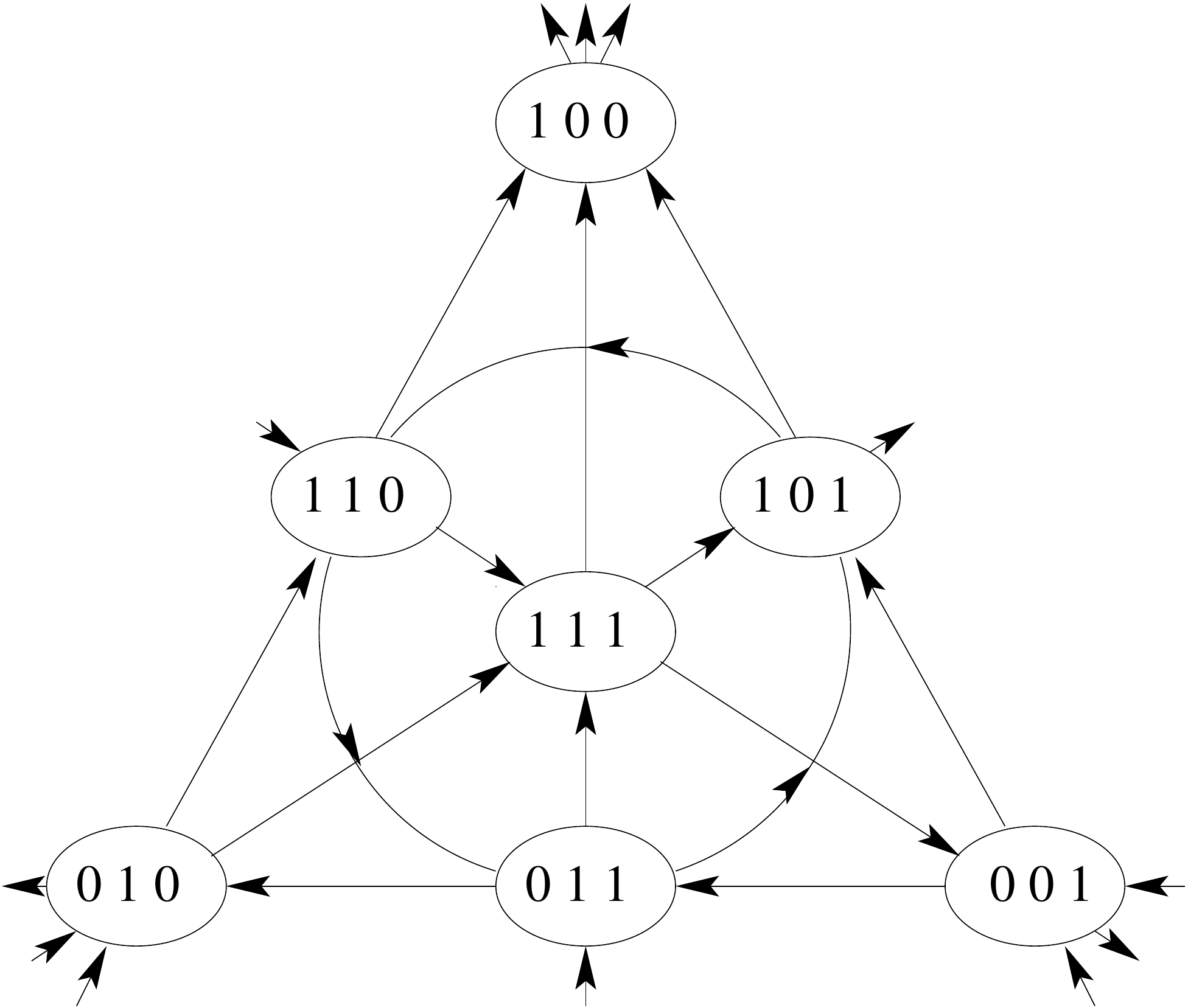}
\end{center}
\caption{$\left(\Z_2\right)^3$-grading on the octonions.}
\label{OandM}
\end{figure}

The algebra of octonions $\bbO$ can also be viewed as
a twisted group algebra \cite{AM}.
It is isomorphic to $\R\left[(\Z_2)^3\right]$
equipped with the following product:
$$
u_{(x_1,x_2,x_3)}\cdot{}u_{(y_1,y_2,y_3)}=
\left(-1\right)^{
\left(x_1x_2y_3+x_1y_2x_3+y_1x_2x_3+
\sum_{1\leq{}i\leq{}j\leq3}\,x_iy_j\right)}
u_{(x_1+y_1,\,x_2+y_2,\,x_3+y_3)}.
$$
Note that the twisting function in this case is a polynomial of degree 3,
and does not define a 2-cocycle.
This is equivalent to the fact that the algebra $\bbO$ is not associative.
The multiplication table on $\bbO$ is usually represented by the
Fano plane.
The corresponding $\left(\Z_2\right)^3$-grading is given
in Figure \ref{OandM}.
We also mention that different group gradings on $\bbO$ were studied in \cite{Eld},
we also refer to \cite{Bae} for a survey on the octonions and Clifford algebras.

In this paper, we introduce two series of complex algebras, $\bbO_{n}$ and $\bbM_n$,
and of real algebras, $\bbO_{p,q}$ and $\bbM_{p,q}$.
The series $\bbO_n$ and $\bbO_{p,q}$ generalize the algebra of octonions 
in a similar way as the Clifford
algebras generalize the algebra of quaternions.
The situation can be represented
by the following diagram
$$
\label{SubmodDiagramm}
\begin{CD}
@.@. \vdots@. \vdots \\
@.@. @AAA@AAA \\
@.@. \Cl_{4}@.\bbO_{5} \\
@.@.@AAA@AAA\\
@.@.\Cl_{3}@.\bbO_{4}\\
@.@.@AAA @AAA \\
\R@> >>\C @> >> \bbH @> >> \bbO @> >>\mathrm{\bbS}@> >>\cdots
\end{CD}
$$
where the horizontal line represents the Cayley-Dickson procedure
(see, e.g., \cite{Bae,ConSmi}),
in particular, $\bbS$ is the 16-dimensional algebra of sedenions.
The algebra $\bbM_n$ ``measures'' the difference between
$\bbO_n$ and~$\Cl_n$.

The precise definition is as follows.
The (complex) algebras $\bbO_{n}$ are twisted group algebras $\bbK\left[(\Z_2)^n\right]$ with the
product (\ref{TwistProd}), given by the function
\begin{equation}
\label{NashProd}
f_\bbO(x,y)=\sum_{1\leq{}i<j<k\leq{}n}
\left(
x_ix_jy_k+x_iy_jx_k+y_ix_jx_k
\right)+
\sum_{1\leq{}i\leq{}j\leq{}n}\,x_iy_j,
\end{equation}
for arbitrary $n$.
The algebras $\bbM_n$ are defined by the twisting function
\begin{equation}
\label{ForgProd}
f_\bbM(x,y)=\sum_{1\leq{}i<j<k\leq{}n}
\left(
x_ix_jy_k+x_iy_jx_k+y_ix_jx_k
\right),
\end{equation}
which is just the homogeneous part of degree 3 of the function $f_\bbO$
(i.e., with the quadratic part removed).
In the real case, one can again add the signature term
$\sum_{1\leq{}i\leq{}p}x_iy_i$, which
only changes the square of some generators,
and define the algebras $\bbO_{p,q}$ and $\bbM_{p,q}$.

The function $f_\bbO$ is a straightforward generalization of the
twisting function corresponding to the octonions.
In particular, the algebra $\bbO_3$ is just the complexified octonion algebra
$\bbO\otimes{}\C$.
In the real case, $\bbO_{0,3}\cong\bbO$,
the algebras $\bbO_{3,0}\cong\bbO_{2,1}\cong\bbO_{1,2}$
are isomorphic to another famous algebra called the algebra of split-octonions.
The first really interesting new example is the algebra $\bbO_5$
and its real forms $\bbO_{p,q}$ with $p+q=5$.

The algebras $\bbO_n$ and $\bbM_n$ are not associative,
moreover, they are not alternative.
It turns out however, that these algebras
have nice properties similar to those of the octonion algebra
and of the Clifford algebras at the same time.

As an ``abstract algebra'', $\bbO_n$ can be defined 
in a quite similar way as the Clifford algebras.
The algebra~$\bbO_n$ has
$n$ generators $u_1,\cdots,u_n$ such that $u_i^2=-1$ and
\begin{equation}
\label{RugaetMenia}
u_i\cdot{}u_j=-u_j\cdot{}u_i,
\end{equation}
respectively, together with the 
antiassociativity relations 
\begin{equation}
\label{RugaetMeniaBis}
u_i\cdot(u_j\cdot{}u_k)=-(u_i\cdot{}u_j)\cdot{}u_k,
\end{equation}
for $i\not=j\not=k$.
We will show that {\it the algebras $\bbO_n$
are the only algebras with $n$ generators $u_1,\cdots,u_n$
satisfying (\ref{RugaetMenia}) and (\ref{RugaetMeniaBis}) and such that
any three monomials  $u,v,w$ either associate or antiassociate
independently of the order of $u,v,w$}.

The relations of higher degree are then calculated inductively using the following
simple ``linearity law''.
Given three monomials $u,v,w$, then 
$$
u\cdot(v\cdot{}w)=(-1)^{\phi(\deg{u},\deg{v},\deg{w})}\,(u\cdot{}v)\cdot{}w,
$$
where $\phi$ is the trilinear function uniquely defined by the
above relations of degree~3, see Section \ref{GenRelSect} for the details.
For instance, one has
$
u_i\cdot((u_j\cdot{}u_k)\cdot{}u_\ell)=(u_i\cdot(u_j\cdot{}u_k))\cdot{}u_\ell,
$
for $i\not=j\not=k\not=\ell,$ etc.

The presentation of $\bbM_n$ is exactly the same as above, except that
the generators of $\bbM_n$ commute.
We will prove two classification results characterizing the algebras $\bbO_n$ and $\bbM_n$
algebras in an axiomatic way.

Our main tool is the notion of \textit{generating function}.
This is a function in one argument $\a: (\Z_2)^n\rightarrow \Z_2$ that encodes
the structure of the algebra.
Existence of a generating function is a strong condition.
This is a way to distinguish the series $\bbO_n$ and $\bbM_n$
from the classical Cayley-Dickson algebras.

The main results of the paper contain four theorems and
their corollaries.

\begin{enumerate}
\item
Theorem \ref{Thmalpha} states that the generating function
determines a (complex) twisted group algebra completely.
\item
Theorem~\ref{AlphMainTh} is a general characterization of non-associative
twisted group algebras over $(\Z_2)^n$ with symmetric non-associativity
factor, in terms of generating functions.
\item
Theorem \ref{SimProp} answers the question for which $n$ (and $p,q$)
the constructed algebras are simple.
The result is quite similar to that for the Clifford algebras, except that
the algebras $\bbO_n$ and $\bbM_n$ degenerate for one value of $n$
over 4 and not 1 over 2 as~$\Cl_n$.
\item
Theorem \ref{Solcx} provides explicit formul{\ae} of the
Hurwitz-Radon square identities.
The algebras $\bbO_n$ (as well as $\bbM_n$) are not composition algebras.
However, they have natural Euclidean norm $\cN$.
We obtain a necessary and sufficient condition
for elements $u$ and~$v$ to satisfy
$
\cN(u\cdot{}v)=\cN(u)\,\cN(v).
$
Whenever we find two subspaces $V,W\subset\bbO_n$
consisting of elements satisfying this condition,
we obtain a square identity generalizing the famous ``octonionic''
8-square identity.
\end{enumerate}

The algebras $\bbO_n$ and $\bbM_n$ are
closely related to the theory of Moufang loops and, in particular,
to code loops, see \cite{Gri,DV,NV} and references therein.
Indeed, the homogeneous elements $\pm{}u_x$, where $x\in(\Z_2)^n$
form a Moufang loop of rank~$2^{n+1}$.
As an application, we show in  Section \ref{LaSec} how the famous Parker loop
fits into our framework.

Our main tools include variations on the cohomology of $\left(\Z_2\right)^n$
and the linear algebra over~$\left(\Z_2\right)^n$.
A brief account on this subject is presented in Section \ref{CSec} and in Appendix.

\medskip

\noindent \textbf{Acknowledgments}.
This work was completed at the
Mathematisches Forschungsinstitut Oberwolfach (MFO).
The first author has benefited from the award of a
\textit{Leibniz Fellowship},
the second author is also grateful to MFO for hospitality.
We are pleased to thank
Christian~Duval, Alexey Lebedev, Dimitry Leites,
John McKay and Sergei Tabachnikov
for their interest and helpful comments.
We are also grateful to anonymous referees for a number of
useful comments.

%%%%%%%%%%%%%%%%%%%%%%%%%%%%%%%%%
%%%%%%%%%%%%%%%%%%%%%%%%%%%%%%%%%
\section{Twisted group algebras over $\left(\Z_2\right)^n$}
%%%%%%%%%%%%%%%%%%%%%%%%%%%%%%%%%
%%%%%%%%%%%%%%%%%%%%%%%%%%%%%%%%%

In this section, we give the standard definition of twisted group algebra
over the abelian group $\left(\Z_2\right)^n$.
The twisting function we consider is not necessarily a 2-cocycle.
We recall the related notion of graded quasialgebra introduced in~\cite{AM}.
In the end of the section, we give a short account on the cohomology
of $\left(\Z_2\right)^n$ with coefficients in $\Z_2$.

%%%%%%%%%%%%%%%%%%%%%%%%%%%%%%%%%
\subsection{Basic definitions}\label{TheVeryFirS}
%%%%%%%%%%%%%%%%%%%%%%%%%%%%%%%%%

The most general definition is the following.
Let $(G,+)$ be an abelian group.
A \textit{twisted group algebra}
$\left(\bbK\left[G\right],\,F\right)$
is the algebra spanned by the elements
$u_x$ for $x\in{}G$ and equipped with the product 
$$
u_x\cdot{}u_y=F(x,y)\,
u_{x+y},
$$
where
$F:G\times{}G\to\bbK^*$
is an \textit{arbitrary} two-argument function
such that 
$$
F(0,.)=F(.,0)=1.
$$
The algebra $\left(\bbK\left[G\right],\,F\right)$ is always unital and it
is associative if and only if $F$ is a 2-cocycle on $G$.
Twisted group algebras are a classical subject
(see, e.g., \cite{Con,BZ} and references therein).

We will be interested in the particular case of twisted algebras over
$G=\left(\Z_2\right)^n$ and the twisting function $F$ of the form
$$
F(x,y)=\left(-1\right)^{f(x,y)},
$$
with $f$ taking values in $\Z_2\cong\{0,1\}$.
We  will denote by $\left(\tA,\,f\right)$
the corresponding twisted group algebra.
Let us stress that the function $f$ is not necessarily a 2-cocycle.

%%%%%%%%%%%%%%%%%%%%%%%%%%%%%%%%%
\subsection{Quasialgebra structure}\label{QuaSec}
%%%%%%%%%%%%%%%%%%%%%%%%%%%%%%%%%

An arbitrary twisted group algebra
$\A=\left(\tA,\,f\right)$
gives rise to two functions
$$
\b:\left(\Z_2\right)^n\times{}\left(\Z_2\right)^n\to\Z_2,
\qquad
\phi:\left(\Z_2\right)^n\times{}\left(\Z_2\right)^n\times{}\left(\Z_2\right)^n\to\Z_2
$$
such that 
\begin{eqnarray}
\label{DeffiC}
u_x\cdot{}u_y&=&
(-1)^{\b(x,y)}\,
u_y\cdot{}u_x,\\[6pt]
\label{Deffi}
u_x\cdot\left(u_y\cdot{}u_z\right)&=&
(-1)^{\phi(x,y,z)}
\left(u_x\cdot{}u_y\right)\cdot{}u_z,
\end{eqnarray}
for any homogeneous elements $u_x,u_y,u_z\in\A$.
The function $\b$ obviously satisfies the following properties:
$\b(x,y)=\b(y,x)$ and $\b(x,x)=0$.
Following \cite{AM}, we call the structure $\b,\phi$
a \textit{graded quasialgebra}.

The functions $\b$ and $\phi$ can be expressed in terms of the twisting function $f$:
\begin{eqnarray}
\label{bef}
\b (x,y)&=&f(x,y) +f(y,x),\\[6pt]
\label{phef}
\phi (x,y,z)&=&f(y,z)+f(x+y,z)+f(x,y+z)+f(x,y).
\end{eqnarray}
Note that (\ref{phef}) reads 
$$
\phi=\d{}f.
$$
In particular, $\phi$ is a (trivial) 3-cocycle.
Conversely, given the functions $\b$ and $\phi$, to what extent the corresponding
function $f$ is uniquely defined?
We will give the answer to this question in Section \ref{IsomSec}.

\begin{ex}
(a)
For the Clifford algebra $\Cl_n$ (and for
$\Cl_{p,q}$ with $p+q=n$),
the function $\b$ is bilinear:
$$
\b_{\Cl_n}(x,y)=\sum_{i\not=j}x_iy_j.
$$
The function $\phi\equiv0$ since the twisting function (\ref{Cliffunet}) is a 2-cocycle,
this is of course equivalent to the associativity property.
Every simple graded quasialgebra
with bilinear $\b$ and $\phi\equiv0$ is a Clifford algebra,
see \cite{OM}.

(b)
For the algebra of octonions $\bbO$, the function $\b$ is as follows:
$\b(x,y)=0$ if either $x=0$, or $y=0$, or $x=y$;
otherwise, $\b(x,y)=1$.
The function $\phi$ is the determinant of $3\times3$ matrices:
$$
\phi(x,y,z)=
\det\left|
x,y,z
\right|,
$$
where $x,y,z\in\left(\Z_2\right)^3$.
This function is symmetric and trilinear.
\end{ex}

\begin{rem}
The notion of graded quasialgebra was defined in \cite{AM1}
in a more general situation where $G$ is an arbitrary
abelian group and the functions 
that measure the defect of commutativity and associativity take values
in~$\bbK^*$ instead of $\Z_2$.
The ``restricted version'' we consider is very special and
this is the reason we can say much more about it.
On the other hand, many classical algebras can be treated within
our framework.
\end{rem}

%%%%%%%%%%%%%%%%%%%%%%%%%%%%%%%%%
\subsection{The pentagonal and the hexagonal diagrams}\label{DiagSect}
%%%%%%%%%%%%%%%%%%%%%%%%%%%%%%%%%

Consider any three homogeneous elements, $u,v,w\in\A$.
The functions $\b$ and $\phi$
relate the different products, $u(vw), (uv)w,\allowbreak (vu)w$, etc.
The hexagonal diagrams in Figure \ref{5and6} 
\begin{figure}[hbtp]
\includegraphics[width=12cm]{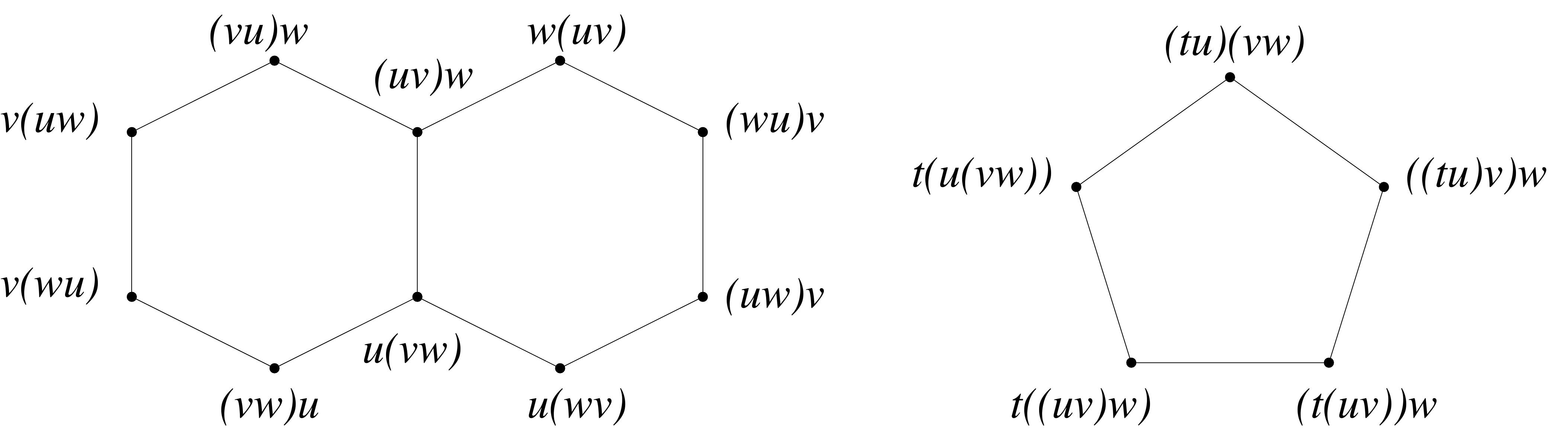}
%\end{center}
\caption{Two hexagonal and the pentagonal commutative diagrams}
\label{5and6}
\end{figure}
represent different loops in $\A$ that lead to the following identities 
\begin{equation}
\label{BetPhi}
\begin{array}{rcl}
\phi(x,y,z)+\b(x,y+z)+\phi(y,z,x)+\b(z,x)+\phi(y,x,z)+\b(x,y)&=&0\,,\\[6pt]
\phi(x,y,z)+\b(z,y)+\phi(x,z,y)+\b(z,x)+\phi(z,x,y)+\b(x+y,z)&=&0\,.
\end{array}
\end{equation}
Note that these identities can be checked directly from
(\ref{bef}) and (\ref{phef}).
In a similar way,
the products of any four homogeneous elements $t,u,v,w$, 
(see the pentagonal diagrams of Figure \ref{5and6})
 is equivalent to the condition
 \begin{equation}
\label{DeltaPhi}
\phi(y,z,t)+\phi(x+y,z,t)+\phi(x,y+z,t)+\phi(x,y,z+t)+\phi(x,y,z)=0,
\end{equation}
which is nothing but the 3-cocycle condition $\d\phi=0$.
We already knew this identity from $\phi=\d{}f$.

Let us stress on the fact that
these two commutative diagrams are tautologically satisfied
and give no restriction on $f$.

%%%%%%%%%%%%%%%%%%%%%%%%%%%%%%%%%
\subsection{Cohomology $H^*\left(\left(\Z_2\right)^n;\Z_2 \right)$}\label{CSec}
%%%%%%%%%%%%%%%%%%%%%%%%%%%%%%%%%

In this section, we recall classical notions and results on the cohomology
of $G=(\Z_2)^n$ with coefficients in $\Z_2$.

We consider the space of cochains, $\Cc^q=\Cc^q(G;\Z_2)$, consisting of (arbitrary)
maps in $q$ arguments $c:G\times\cdots\times{}G\to\Z_2$.
The usual coboundary operator $\d:\Cc^{q}\to\Cc^{q+1}$
 is defined by
$$
\d c(g_1,\ldots ,g_{q+1})= c(g_1,\ldots, g_{q})
+
\sum_{i=1}^q c(g_1,\ldots, g_{i-1},g_{i}+g_{i+1},g_{i+2},\ldots,, g_{q})
+
 c(g_2,\ldots ,g_{q+1}),
$$
for all $g_1, \ldots, g_{q+1} \in G$.
This operator satisfies $\d^2=0$.

A cochain $c$ is called \textit{$q$-cocycle} if $\d q=0$,
and called a \textit{$q$-coboundary} (or a trivial $q$-cocycle)
if $c=\d b$, for some cochain $b\in \Cc^{q-1}$.
The space of $q$-th cohomology, $H^{q}(G;\Z_2)$,
is the quotient space of $q$-cocycles modulo $q$-coboundaries.
We are particularly interested in the case where $q=1, 2$ or $3$.

A fundamental result (cf. \cite{Ade}, p.66) states that the cohomology ring
$H^*(G;\Z_2)$ is isomorphic to the algebra of polynomials
in $n$ commuting variables $e_1,\ldots,e_n$:
$$
H^*(G;\Z_2)\cong \Z_2[e_1,\ldots,e_n].
$$

The basis of $H^q(G;\Z_2)$ is given by the
cohomology classes of the following multilinear
$q$-cochains:
\begin{equation}
\label{BasCoc}
(x^{(1)},\ldots, x^{(q)})\mapsto x^{(1)}_{i_1}\cdots\, x^{(q)}_{i_q},
\qquad
i_1\leq\cdots\leq{}i_q,
\end{equation}
where each $x^{(k)}\in(\Z_2)^n$ is an $n$-tuple of 0 and 1:
$$
x^{(k)}=
(x^{(k)}_1,\ldots,x^{(k)}_n).
$$
The $q$-cocycle \eqref{BasCoc} is identified with the monomial
$e_{i_1}\cdots\,e_{i_q}$.

\begin{ex}
The linear maps $c_i(x)=x_i$, for $i=1,\ldots,n$
provide a basis of $H^{1}(G;\Z_2)$ while
the bilinear maps
$$
c_{ij}(x,y)=x_i\,y_j,
\qquad i\leq j
$$
provide a basis of the second cohomology space $H^{2}(G;\Z_2)$.
\end{ex}

%%%%%%%%%%%%%%%%%%%%%%%%%%%%%%%%%
\subsection{Polynomials and polynomial maps}\label{Poly}
%%%%%%%%%%%%%%%%%%%%%%%%%%%%%%%%%

The space of all functions on $(\Z_2)^n$ with values in $\Z_2$ is isomorphic to the
quotient space
$$
\Z_2\left[x_1,\ldots,x_n\right]\,/\,(x_i^2-x_i \,:\, i=1,\ldots ,n).
$$
A function $P:(\Z_2)^n\to\Z_2$ can be expressed as a polynomial
in $(x_1,\ldots,x_n)$,
but not in a unique way.

Throughout the paper we identify the function $P$ to the polynomial expression
in which each monomial is minimally represented (i.e. has lowest degree as possible). 
So that each function $P$ can be uniquely written in the following form 
$$
P=\sum_{k=0}^{n}\;
\sum_{1\leq{}i_1<\cdots<i_k\leq{}n}
\l_{i_1\ldots{}i_k}\,x_{i_1}\cdots\,x_{i_k},
$$
where $\l_{i_1\ldots{}i_k}\in\{0,1\}$.

%%%%%%%%%%%%%%%%%%%%%%%%%%%%%%%%%
%%%%%%%%%%%%%%%%%%%%%%%%%%%%%%%%%
\section{The generating function}
%%%%%%%%%%%%%%%%%%%%%%%%%%%%%%%%%
%%%%%%%%%%%%%%%%%%%%%%%%%%%%%%%%%

In this section we go further into the general theory of twisted group algebra
over $(\Z_2)^n$.
We define the notion of generating function.
To the best of our knowledge, this notion has never been considered.
This notion will be fundamental for us since it allows to distinguish
the Clifford algebras, octonions and the two new series we introduce in this paper
from other twisted group algebras over $(\Z_2)^n$ (such as Cayley-Dickson algebras).
The generating function contains the full information about the algebra,
except for the signature.

%%%%%%%%%%%%%%%%%%%%%%%%%%%%%%%%%
\subsection{Generating functions}\label{DefGenFn}
%%%%%%%%%%%%%%%%%%%%%%%%%%%%%%%%%
The notion of generating function makes sense for any
$G$-graded quasialgebra $\A$, over an arbitrary abelian group  $G$.
We are only interested in the case where  $\A$ is a twisted group algebra over $(\Z_2)^n$.

\begin{defi}
Given a $G$-graded quasialgebra, a function
$
\a:G\to\Z_2
$
will be called a \textit{generating function}
if the binary function $\b$ and the ternary function $\phi$
defined by (\ref{DeffiC}) and (\ref{Deffi})
are both determined by $\a$ via
\begin{eqnarray}
\label{Genalp1}
\b(x,y)&=&\a(x+y)+\a(x)+\a(y),\\[6pt]
\label{Genalp2}
\phi(x,y,z)&=&\a(x+y+z)\cr
&&+\a(x+y)+\a(x+z)+\a(y+z)\cr
&&+\a(x)+\a(y)+\a(z).
\end{eqnarray}
\noindent
Note that the identity (\ref{Genalp1}) implies that $\a$ vanishes
on the zero element $0=(0,\ldots,0)$ of $(\Z_2)^n$, because the corresponding
element $1:=u_0$ is the unit of $\A$ and therefore commutes with
any other element of $\A$.
\end{defi}

The identity (\ref{Genalp1}) means that $\b$ is the differential
of $\a$ in the usual sense of group cohomology.
The second identity (\ref{Genalp2}) suggests the operator of
``second derivation'', $\d_2$, defined by the right-hand-side, so that
the above identities then read:
$$
\b=\d\a,
\qquad
\phi=\d_2\a.
$$
The algebra $\A$ is commutative if and only if $\d\a=0$;
it is associative if and only if $\d_2\a=0$.
The cohomological meaning of the operator $\d_2$
will be discussed in Appendix.

Note also that formul{\ae} (\ref{Genalp1}) and (\ref{Genalp2})
are known in linear algebra and usually called \textit{polarization}.
This is the way one obtains a bilinear form from a quadratic one
and a trilinear form from a cubic one, respectively.

\begin{ex}
\label{ExGenHO}
(a)
The classical algebras of quaternions $\bbH$ and of octonions $\bbO$
have generating functions.
They are of the form: 
$$
\a(x)=\left\{
\begin{array}{lr}
1,&x\not=0,\\
0,&x=0.
\end{array}
\right.
$$
It is amazing that such simple functions contain the full information about
the structure of $\bbH$ and $\bbO$.

(b)
The generating function of $\Cl_n$ is as follows:
\begin{equation}
\label{ClAlp}
\a_{\Cl}(x)=\sum_{1\leq{}i\leq{}j\leq{}n}\,x_ix_j.
\end{equation}
Indeed, one checks that the binary function $\b$ defined by (\ref{Genalp1}) is exactly the
skew-symmetrization of the function
$f=\sum_{1\leq{}i \leq{}j\leq{}n}x_iy_j$.
The function $\phi$ defined by (\ref{Genalp2})
is identically zero, since $\a$ is a quadratic polynomial.
\end{ex}

The most important feature of the notion of generating function
is the following.
In the complex case, the generating function contains
the full information about the algebra.

\begin{thm}\label{Thmalpha}
If $\A$ and $\A'$ are two complex twisted group algebras with the same generating function, then 
$\A$ and $\A'$ are isomorphic as graded algebras.
\end{thm}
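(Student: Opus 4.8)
The plan is to construct an explicit graded isomorphism $T:\A\to\A'$, where $\A=(\tA,f)$ and $\A'=(\tA,f')$. Since $T$ must preserve the $(\Z_2)^n$-grading and each homogeneous component $\C\,u_x$ is one-dimensional, $T$ is forced into the diagonal form $T(u_x)=\m(x)\,u'_x$ for some $\m:(\Z_2)^n\to\C^*$. Imposing that $T$ be an algebra homomorphism and comparing the coefficients of $u'_{x+y}$ on both sides of $T(u_x\cdot u_y)=T(u_x)\cdot T(u_y)$, one finds that $T$ is an isomorphism (it is automatically bijective once $\m$ is everywhere nonzero) if and only if
$$
\frac{\m(x)\,\m(y)}{\m(x+y)}=(-1)^{g(x,y)},\qquad g:=f+f' \ \ (\textrm{in }\Z_2).
$$
So the whole theorem reduces to solving this for $\m$, i.e. to showing that the $\C^*$-valued $2$-cochain $(-1)^{g}$ is a coboundary for the multiplicative differential $\d\m(x,y)=\m(x)\m(y)\m(x+y)^{-1}$.

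The next step is to read off the two properties of $g$ forced by the hypothesis that $\A,\A'$ share the generating function $\a$. By (\ref{Genalp1})--(\ref{Genalp2}) the pairs $\b,\phi$ and $\b',\phi'$ of the two algebras coincide. Using (\ref{bef}), the equality $\b=\b'$ gives $f(x,y)+f(y,x)=f'(x,y)+f'(y,x)$, hence $g(x,y)=g(y,x)$: the cochain $g$ is \emph{symmetric}. Using (\ref{phef}), i.e. $\phi=\d f$, the equality $\phi=\phi'$ gives $\d g=\d f+\d f'=\phi+\phi'=0$: the cochain $g$ is a \emph{$2$-cocycle}. Thus $g$ is a symmetric $\Z_2$-valued $2$-cocycle on $(\Z_2)^n$.

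The heart of the proof is then the claim that every symmetric $2$-cocycle $g$ with values in $\Z_2$ yields a $\C^*$-valued coboundary $(-1)^g$. I would prove this by an extension-theoretic argument. Since $(-1)^g$ is a $\C^*$-valued $2$-cocycle (because $\d\bigl((-1)^g\bigr)=(-1)^{\d g}=1$), it defines a central extension
$$
1\longrightarrow \C^*\longrightarrow \widetilde E\longrightarrow (\Z_2)^n\longrightarrow 1,
$$
realized on $\C^*\times(\Z_2)^n$ with product $(\l,x)(\kappa,y)=\bigl(\l\kappa\,(-1)^{g(x,y)},\,x+y\bigr)$. A section $x\mapsto(\m(x)^{-1},x)$ is a group homomorphism precisely when $(-1)^g=\d\m$, so it suffices to prove that this extension splits. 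Because $g$ is symmetric, $\widetilde E$ is abelian; and $\C^*$ is divisible, hence an injective $\Z$-module, so the identity $\C^*\to\C^*$ extends along $\C^*\hookrightarrow\widetilde E$ to a retraction $\widetilde E\to\C^*$. A retraction splits an abelian group extension, which produces the desired homomorphic section and hence $\m$ (with $\m(0)=1$, so that units correspond).

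The main obstacle is exactly this last claim, and it is where the complex ground field is indispensable. Concretely, setting $y=x$ in the coboundary equation forces $\m(x)^2=(-1)^{g(x,x)}$, so whenever $g(x,x)=1$ one needs a square root of $-1$: this is available in $\C^*$ (indeed $\widetilde E$ has exponent dividing $4$, and $\m$ may be chosen valued in $\{\pm1,\pm\ii\}$) but not in $\R^*$. This pinpoints the one place where the argument goes beyond formal manipulation with $\b$, $\phi$ and $\a$, and explains why the statement is restricted to the complex case.
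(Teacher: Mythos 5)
Your proof is correct, and it reaches the conclusion by a genuinely different route from the paper's. Both arguments begin the same way: equality of generating functions forces $\b=\b'$ and $\phi=\phi'$, hence $g=f+f'$ is a symmetric $\Z_2$-valued $2$-cocycle, and the whole problem reduces to showing that $(-1)^g$ is the multiplicative coboundary of some $\m:(\Z_2)^n\to\C^*$. At that point the paper (Proposition \ref{VeryFirstProp}) invokes the explicit description of $H^2((\Z_2)^n;\Z_2)$ to write $g=\d b+\sum_i\l_i\,x_iy_i$ after the antisymmetric terms are killed by symmetry, and then treats the two pieces separately: the coboundary by Lemma \ref{LemA}, and the diagonal ``signature'' part by the explicit $\sqrt{-1}$-rescaling of Proposition \ref{SignProp} (formula \eqref{Somewhat} is exactly the statement that $(-1)^{x_{i}y_{i}}$ is the multiplicative coboundary of $\sqrt{-1}^{\,x_{i}}$). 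You instead trivialize the whole cocycle at once: the symmetric cocycle $(-1)^g$ defines an \emph{abelian} extension of $(\Z_2)^n$ by $\C^*$, and divisibility (hence injectivity as a $\Z$-module) of $\C^*$ splits it, i.e.\ $\mathrm{Ext}^1_{\Z}((\Z_2)^n,\C^*)=0$. Your approach is cleaner and avoids computing $H^2$ with $\Z_2$-coefficients; the paper's is more elementary and produces an explicit fourth-root-of-unity formula for $\m$, which it reuses elsewhere (notably in separating the signature, the one piece that survives over $\R$). Your closing paragraph correctly locates where the complex field is indispensable ($\m(x)^2=(-1)^{g(x,x)}$ forces $\m$ to take values in fourth roots of unity); the only inaccuracy is the parenthetical claim that $\widetilde E$ itself has exponent dividing $4$ --- it contains $\C^*$, so it does not --- but nothing in the argument depends on that remark.
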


This theorem will be proved in Section \ref{IsomSec}.

In the real case, the generating function determines the algebra
up to the signature.

%%%%%%%%%%%%%%%%%%%%%%%%%%%%%%%%%
\subsection{The signature}\label{SigSec}
%%%%%%%%%%%%%%%%%%%%%%%%%%%%%%%%%

Consider a twisted group algebra $\A=(\bbK\left[(\Z_2)^n\right],f)$.
We will always use the following set of generators of $\A$:
\begin{equation}
\label{General}
u_i=u_{(0,\ldots,0,1,0,\ldots0)},
\end{equation}
where 1 stands at $i$-th position.
One has $u_i^2=\pm 1$,
the sign being determined by $f$.
The \textit{signature} is the data of the signs of the squares of the generators~$u_i$.

\begin{defi}
We say that the twisting functions
$f$ and $f'$
differs by a signature  if one has
\begin{equation}
\label{Signatur}
f(x,y)-f'(x,y)=x_{i_1}y_{i_1}+\cdots+x_{i_p}y_{i_p},
\end{equation}
where $p\leq{}n$ is an integer.
\end{defi}

Note that $f-f'$ as above, is a non-trivial 2-cocycle, for $p\geq 1$.
The quasialgebra structures defined by \eqref{bef} and \eqref{phef} are identically the same:
$\b=\b'$ and $\phi=\phi'$.

The signature represents the main difference between the
twisted group algebras over $\C$ and $\R$.

\begin{prop}
\label{SignProp}
If $\A=(\C\left[(\Z_2)^n\right],f)$ and $\A'=(\C\left[(\Z_2)^n\right],f')$
are complex twisted group algebras such that
$f$ and $f'$ differs by a signature, then $\A$ and $\A'$ are isomorphic
as graded algebras.
\end{prop}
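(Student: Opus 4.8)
The plan is to build an explicit graded algebra isomorphism $\varphi:\A\to\A'$. Since $\varphi$ must respect the $(\Z_2)^n$-grading and each homogeneous component $\C\,u_x$ (resp. $\C\,u'_x$) is one-dimensional, $\varphi$ is forced to have the form $\varphi(u_x)=\lambda_x\,u'_x$ for some scalars $\lambda_x\in\C^*$. First I would write out what multiplicativity means for such a $\varphi$. Comparing
\[
\varphi(u_x\cdot u_y)=(-1)^{f(x,y)}\lambda_{x+y}\,u'_{x+y}
\quad\text{with}\quad
\varphi(u_x)\cdot\varphi(u_y)=(-1)^{f'(x,y)}\lambda_x\lambda_y\,u'_{x+y},
\]
the homomorphism condition collapses to the single functional equation
\[
\lambda_{x+y}=(-1)^{g(x,y)}\,\lambda_x\lambda_y,\qquad
g(x,y):=f(x,y)+f'(x,y)=\sum_{k=1}^p x_{i_k}y_{i_k},
\]
to be solved with $\lambda_0=1$ (the latter guaranteeing that the unit is preserved). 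So the entire problem reduces to solving this equation for $\lambda$.

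The key step, and the point where the complex ground field is indispensable, is to solve the equation using fourth roots of unity instead of mere signs. I would set
\[
\lambda_x=\ii^{\,x_{i_1}+\cdots+x_{i_p}},
\]
where $\ii=\sqrt{-1}$ and the exponent is an ordinary \emph{integer} sum, not reduced modulo $2$. The verification rests on the elementary identity $a+b-(a\oplus b)=2ab$ for $a,b\in\{0,1\}$, where $\oplus$ denotes addition in $\Z_2$. Writing $s(x)=\sum_k x_{i_k}$, summation over $i_1,\dots,i_p$ gives $s(x)+s(y)-s(x+y)=2\,g(x,y)$, whence
\[
\lambda_x\lambda_y=\ii^{\,s(x)+s(y)}=\ii^{\,s(x+y)}\,\ii^{\,2g(x,y)}=(-1)^{g(x,y)}\,\lambda_{x+y},
\]
which is exactly the required relation.

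Finally I would observe that $\varphi$, sending each basis vector $u_x$ to a nonzero multiple of $u'_x$, is a grading-preserving linear bijection; being in addition multiplicative and unital, it is a graded algebra isomorphism. This is consistent with the remark preceding the statement, since $g$ is symmetric and a $2$-cocycle, so $\b=\b'$ and $\phi=\phi'$ are automatically unchanged.

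The main obstacle here is conceptual rather than computational: over $\C$ the sign cocycle $(-1)^{g}$ is a coboundary in $H^2\!\left((\Z_2)^n;\C^*\right)$ precisely because $\C^*$ contains the square root $\ii$ of $-1$, which is what allows the ``diagonal'' terms $x_{i_k}y_{i_k}$ to be absorbed by rescaling the basis. Over $\R$ the very same cocycle represents a nontrivial class in $H^2\!\left((\Z_2)^n;\R^*\right)$ and cannot be trivialized in this way. This is exactly why the proposition is stated over $\C$ and why the signature remains a genuine invariant in the real case.
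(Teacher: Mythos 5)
Your proof is correct and is essentially the paper's own argument: the paper uses the same rescaling $u_x\mapsto \sqrt{-1}^{\,x_{i_1}}u'_x$ (doing $p=1$ and then inducting, whereas you treat general $p$ in one step), and your identity $a+b-(a\oplus b)=2ab$ is exactly the paper's formula \eqref{Somewhat}. No issues.
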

\begin{proof}
Let us assume $p=1$ in \eqref{Signatur}, i.e.,
 $f(x,y)-f'(x,y)=x_{i_1}y_{i_1}$, the general case will then follow by induction.
Let $u_x$, resp. $u'_x$, be the standard basis elements of $\A$, resp. $\A'$.
Let us consider the map $\theta:\A\to\A'$ defined by
$$
\theta(u_x)=
\left\{
\begin{array}{rl}
\sqrt{-1}\,u'_x, & \text{if } \,x_{i_1}=1,\\[4pt]
u'_x, & \hbox{otherwise}.
\end{array}
\right.
$$
Note that one can write $\theta(u_x)=\sqrt{-1}^{\,x_{i_1}}u'_x$, for all $x$.
Let us show that $\theta$ is a (graded) isomorphism between $\A$ and $\A'$.
On the one hand,
$$
\theta(u_x\cdot u_y)=(-1)^{f(x,y)}\,\theta(u_{x+y})
=(-1)^{f(x,y)}\, \sqrt{-1}^{\,(x_{i_1}+y_{i_1})}\,u'_{x+y}.
$$
On the other hand,
$$
\theta(u_x)\cdot \theta(u_y)=\sqrt{-1}^{\,x_{i_1}}\,u'_x \cdot \sqrt{-1}^{\,y_{i_1}}\,u'_y
=\sqrt{-1}^{\,x_{i_1}}\,\sqrt{-1}^{\,y_{i_1}}\,(-1)^{f'(x,y)}\,u'_{x+y}.
$$
Using the following (surprising) formula
\begin{equation}
\label{Somewhat}
\frac{\sqrt{-1}^{\,x_{i_1}}\sqrt{-1}^{\,y_{i_1}}}{\sqrt{-1}^{\,(x_{i_1}+y_{i_1})}}
=(-1)^{x_{i_1}y_{i_1}},
\end{equation}
we obtain $\theta(u_x\cdot u_y)=\theta(u_x)\cdot \theta(u_y)$.

To understand \eqref{Somewhat}, beware that the power $x_{i_1}+y_{i_1}$
in the denominator is taken modulo~2.
\end{proof} 

In the real case, the algebras $\A$ and $\A'$ can be non-isomorphic
but can also be isomorphic.
We will encounter throughout this paper the algebras for which
either of this situation occurs.

%%%%%%%%%%%%%%%%%%%%%%%%%%%%%%%%%
\subsection{Isomorphic twisted algebras}\label{IsomSec}
%%%%%%%%%%%%%%%%%%%%%%%%%%%%%%%%%

Let us stress that all the isomorphisms between the twisted group algebras
we consider in this section preserve the grading.
Such isomorphisms are called \textit{graded isomorphisms}.

It is natural to ask under what condition two functions $f$ and $f'$ define
isomorphic algebras.
Unfortunately, we do not know the complete answer to this question
and give here two conditions which are sufficient but certainly not necessary.

\begin{lem}
\label{LemA}
If $f-f'=\d{}b$ is a coboundary, i.e., $b:(\Z_2)^n\to\Z_2$  is a function such that
$$
f(x,y)-f'(x,y)=b(x+y)+b(x)+b(y),
$$
then  the corresponding twisted algebras are isomorphic.
\end{lem}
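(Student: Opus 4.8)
The plan is to construct an explicit graded isomorphism $\theta:\A\to\A'$ directly from the function $b$ witnessing the coboundary. The natural candidate is the diagonal map
$$
\theta(u_x)=(-1)^{b(x)}\,u'_x,
$$
extended linearly. This is a graded map (it sends the homogeneous component of degree $x$ to that of $\A'$ of the same degree) and it is invertible, since on each one-dimensional graded piece it multiplies by the nonzero scalar $(-1)^{b(x)}$. So the only thing to verify is multiplicativity.

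\medskip

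The verification is a short computation. On one side,
$$
\theta(u_x\cdot u_y)=(-1)^{f(x,y)}\,\theta(u_{x+y})=(-1)^{f(x,y)}(-1)^{b(x+y)}\,u'_{x+y}.
$$
On the other side,
$$
\theta(u_x)\cdot\theta(u_y)=(-1)^{b(x)}(-1)^{b(y)}\,u'_x\cdot u'_y
=(-1)^{b(x)+b(y)}(-1)^{f'(x,y)}\,u'_{x+y}.
$$
Comparing exponents (all in $\Z_2$), the two expressions agree precisely when
$$
f(x,y)+b(x+y)=b(x)+b(y)+f'(x,y),
$$
i.e. when $f(x,y)-f'(x,y)=b(x+y)+b(x)+b(y)$, which is exactly the hypothesis $f-f'=\d b$. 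Hence $\theta$ is multiplicative, and being an invertible graded linear map, it is a graded isomorphism.

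\medskip

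I expect there to be essentially no obstacle here: the statement is really a bookkeeping identity, and the whole content is choosing the right ``gauge transformation'' $u_x\mapsto(-1)^{b(x)}u'_x$ and reading off that the coboundary condition is exactly the cocycle-type identity needed for multiplicativity. The one point worth a word of care is that $\theta$ must respect the unit, but this is automatic: since $f(0,\cdot)=f'(0,\cdot)=1$ forces $b(0)=0$ from $\d b(0,0)=b(0)$ being the relevant discrepancy (equivalently, one may simply note $\theta(u_0)=(-1)^{b(0)}u'_0$ and check against $u_0$ being the unit), the map carries the identity to the identity. This argument is the direct analogue of the computation in the proof of Proposition \ref{SignProp}, with the scalar $\sqrt{-1}^{\,x_{i_1}}$ there replaced by the more general scalar $(-1)^{b(x)}$, and the surprising formula \eqref{Somewhat} replaced by the defining relation of a coboundary.
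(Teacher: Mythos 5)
Your proof is correct and uses exactly the same map as the paper, namely $u_x\mapsto(-1)^{b(x)}u'_x$; the paper simply states this map and omits the multiplicativity check that you write out. (Minor notational slip only: the unit condition is $F(0,\cdot)=1$, i.e.\ $f(0,\cdot)=0$ in the exponent, but your conclusion $b(0)=0$ still follows.)
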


\begin{proof}
The isomorphism is given by the map
$u_x\mapsto(-1)^{b(x)}\,u_x$, for all $x\in{}(\Z_2)^n$.
\end{proof}

\begin{lem}
\label{LemB}
Given a group automorphism $T:(\Z_2)^n\to{}(\Z_2)^n$, the functions $f$ and
$$
f'(x,y)=f(T(x),T(y))
$$
define isomorphic twisted group algebras.
\end{lem}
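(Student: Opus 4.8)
The plan is to construct an explicit graded isomorphism directly, mirroring the proof of Lemma \ref{LemA}. Since $T:(\Z_2)^n\to(\Z_2)^n$ is a group automorphism, it is a bijection on the index set of the standard basis, so I would define a linear map $\Theta:\bbK\left[(\Z_2)^n\right]\to\bbK\left[(\Z_2)^n\right]$ on homogeneous generators by
$$
\Theta(u_x)=u_{T(x)},
$$
where on the left $u_x$ denotes a basis element of the algebra $\A=(\tA,f)$ and on the right $u_{T(x)}$ denotes a basis element of $\A'=(\tA,f')$. Because $T$ is a bijection, $\Theta$ permutes the basis and is therefore a linear isomorphism of the underlying vector spaces.

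The heart of the argument is to check that $\Theta$ respects the two products. On the one hand,
$$
\Theta(u_x\cdot u_y)=\Theta\bigl((-1)^{f'(x,y)}u_{x+y}\bigr)=(-1)^{f'(x,y)}\,u_{T(x+y)}=(-1)^{f(T(x),T(y))}\,u_{T(x)+T(y)},
$$
where I used the definition $f'(x,y)=f(T(x),T(y))$ together with the additivity $T(x+y)=T(x)+T(y)$, which holds precisely because $T$ is a group homomorphism. On the other hand, computing in $\A$,
$$
\Theta(u_x)\cdot\Theta(u_y)=u_{T(x)}\cdot u_{T(y)}=(-1)^{f(T(x),T(y))}\,u_{T(x)+T(y)}.
$$
The two expressions coincide, so $\Theta(u_x\cdot u_y)=\Theta(u_x)\cdot\Theta(u_y)$ for all homogeneous $u_x,u_y$, and extending bilinearly gives an algebra isomorphism. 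Finally, since $\Theta$ sends homogeneous elements to homogeneous elements, it is a \emph{graded} isomorphism in the sense fixed at the start of Section \ref{IsomSec}, although the grading is relabelled by the automorphism $T$ rather than preserved index-by-index.

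I expect no serious obstacle here: the statement is genuinely a bookkeeping verification, and the only subtlety worth flagging is the grading convention. The map $\Theta$ is grading-preserving only up to the relabelling $x\mapsto T(x)$ of the grading group $(\Z_2)^n$; one should either interpret ``graded isomorphism'' as allowing a compatible automorphism of the grading group, or simply note that $T$ induces an automorphism of $(\Z_2)^n$ under which the grading is transported. The key algebraic facts that make everything work are exactly the homomorphism property $T(x+y)=T(x)+T(y)$ and the defining relation between $f$ and $f'$; no cohomological input is needed, in contrast with Lemma \ref{LemA} where the coboundary $b$ enters the scaling factor $(-1)^{b(x)}$.
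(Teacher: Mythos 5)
Your proof is correct and is essentially the paper's own one-line argument: the paper exhibits the map $u_x\mapsto u_{T^{-1}(x)}$ from $(\tA,f)$ to $(\tA,f')$, and your $\Theta(u_x)=u_{T(x)}$ is exactly its inverse. One small bookkeeping point: although you declare $\Theta$ to go from $\A=(\tA,f)$ to $\A'=(\tA,f')$, your displayed computation applies the product twisted by $f'$ on the source and by $f$ on the target, so the map you actually verify is an isomorphism $\A'\to\A$ --- which of course still proves the lemma, but you should either swap the labels or replace $T$ by $T^{-1}$ to match your stated direction, since $u_x\mapsto u_{T(x)}$ taken literally from $(\tA,f)$ to $(\tA,f')$ would produce the mismatched sign $(-1)^{f(T^2(x),T^2(y))}$.
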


\begin{proof}
The isomorphism is given by the map
 $u_x\mapsto{}u_{T^{-1}(x)}$, for all $x\in{}(\Z_2)^n$.
\end{proof}

Note that the automorphisms of $(\Z_2)^n$ are
just arbitrary linear transformations.

We are ready to answer the question formulated in the end of Section \ref{QuaSec}.

\begin{prop}
\label{VeryFirstProp}
Given two twisted algebras $\A=(\tA,f)$ and $\A'=(\tA,f')$,
the corresponding quasialgebra structures coincide, i.e., $\b'=\b$ and $\phi'=\phi$,
if and only if
$$
f(x,y)-f'(x,y)=\d{}b(x,y)+\sum_{1\leq i \leq n}\l_i\,x_iy_i,
$$
where $b:(Z_2)^n\to\Z_2$ is an arbitrary function, and $\l_i$ are coefficients in $\Z_2$.
In particular, if $\bbK=\C$, then $\A\cong\A'$. 
\end{prop}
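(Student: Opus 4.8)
The plan is to prove the equivalence in two directions, relying on the explicit formulae \eqref{bef} and \eqref{phef} that express $\b$ and $\phi$ in terms of the twisting function $f$. The key observation is that the condition ``$\b'=\b$ and $\phi'=\phi$'' is a linear condition on the difference $g:=f-f'$, so the entire problem reduces to understanding which functions $g$ satisfy $\d g(x,y):=g(x,y)+g(y,x)=0$ (the symmetry condition coming from $\b=\b'$) and $\d_2$-type vanishing coming from $\phi=\phi'$. I would first note that by linearity of \eqref{bef} and \eqref{phef}, the hypotheses are equivalent to $g(x,y)+g(y,x)=0$ for all $x,y$, together with $\d g=0$ (since $\phi=\d f$ and $\phi'=\d f'$ give $\d g=\phi-\phi'=0$). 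So $g$ is a symmetric $2$-cocycle on $(\Z_2)^n$.

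The backward direction is the easy one and I would dispatch it first: if $g=\d b+\sum_i \l_i x_iy_i$, then $\d g=\d(\d b)+\d\left(\sum_i\l_i x_iy_i\right)=0$ because $\d^2=0$ and each $x_iy_i$ is bilinear hence a $2$-cocycle; and $g$ is visibly symmetric because $\d b$ is symmetric (by \eqref{bef} the coboundary of a $1$-cochain is symmetric) and $\sum_i\l_i x_iy_i$ is symmetric. Hence $\b=\b'$ and $\phi=\phi'$. For the final ``in particular'' assertion over $\C$, I would invoke the already-proved results directly: the $\d b$ part gives an isomorphism by Lemma \ref{LemA}, and the signature part $\sum_i \l_i x_iy_i$ gives an isomorphism by Proposition \ref{SignProp}; composing these yields $\A\cong\A'$.

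The forward direction is the substantive part and where the main work lies. Here I would use the classification of $H^2((\Z_2)^n;\Z_2)$ recalled in Section \ref{CSec}: every $2$-cocycle is cohomologous to a linear combination of the basis cocycles $c_{ij}(x,y)=x_iy_j$ with $i\le j$. So $g=\d b + \sum_{i\le j}\mu_{ij}\,x_iy_j$ for some function $b$ and coefficients $\mu_{ij}\in\Z_2$. The remaining constraint to exploit is that $g$ is \emph{symmetric}, $g(x,y)=g(y,x)$. I would compute the skew part: since $\d b$ is symmetric, symmetry of $g$ forces $\sum_{i\le j}\mu_{ij}(x_iy_j+x_jy_i)$ to be symmetric automatically for the off-diagonal terms, but the genuinely antisymmetric contributions must cancel. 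The cleanest route is to observe that modulo coboundaries the antisymmetrization $\b_g(x,y)=g(x,y)+g(y,x)=\sum_{i\ne j}\mu_{ij}x_iy_j$ (the diagonal terms $x_iy_i$ being symmetric drop out), and $\b_g=0$ by hypothesis forces $\mu_{ij}=0$ for all $i\ne j$. This leaves only the diagonal terms $\sum_i \mu_{ii}\,x_iy_i$, giving exactly the claimed form with $\l_i=\mu_{ii}$.

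The main obstacle will be carrying out the symmetrization argument carefully at the level of cochains rather than cohomology classes: one must check that changing the cocycle representative by a coboundary $\d b$ does not spoil the symmetry bookkeeping, and that the pairing between the off-diagonal basis cocycles and their antisymmetrizations is nondegenerate over $\Z_2$ so that $\b_g=0$ genuinely kills all $\mu_{ij}$ with $i\ne j$. I expect this to be routine once phrased correctly, since $\{x_iy_j+x_jy_i : i<j\}$ are linearly independent antisymmetric bilinear forms, but it is the step that requires genuine attention to the $\Z_2$-linear algebra of cochains; everything else follows formally from $\d^2=0$, the structure of $H^2((\Z_2)^n;\Z_2)$, and the previously established Lemma \ref{LemA} and Proposition \ref{SignProp}.
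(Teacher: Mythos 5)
Your proposal is correct and follows essentially the same route as the paper: reduce to the difference $g=f-f'$, use $\phi=\phi'$ to see $g$ is a $2$-cocycle, invoke the structure of $H^2((\Z_2)^n;\Z_2)$ to write $g=\d b+\sum_{i\le j}\mu_{ij}x_iy_j$, use symmetry from $\b=\b'$ to kill the off-diagonal terms, and conclude the complex isomorphism from Lemma \ref{LemA} and Proposition \ref{SignProp}. The only (harmless) blemish is the notational clash of writing $\d g$ both for the antisymmetrization $g(x,y)+g(y,x)$ and for the coboundary; otherwise the argument matches the paper's.
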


\begin{proof}
If the quasialgebras structures coincide, then
$\phi'=\phi$ implies $\d{}f'=\d{}f$, so that $f-f'$ is a 2-cocycle.
We use the information about the
second cohomology space $H^2((\Z_2)^n;\Z_2)$ (see Section \ref{CSec}).
Up to a 2-coboundary,
every non-trivial 2-cocycle is a linear combination of the following
bilinear maps:
$(x,y)\mapsto{}x_iy_i$, for some $i$ and $(x,y)\mapsto{}x_ky_\ell$, for some
$k<\ell$.
One deduces that $f-f'$ is of the form
$$
f(x,y)-f'(x,y)=\d{}b(x,y)+
\sum_{1\leq{}i\leq{}n}\l_i\,x_iy_i+
\sum_{k<\ell}\mu_{k\ell}\,x_ky_\ell.
$$
Since $\b'=\b$, one observes that $f-f'$ is symmetric, so that
the last summand vanishes, while the second summand is nothing but
the signature.
The isomorphism in the complex case then follows from Proposition \ref{SignProp}
and Lemma \ref{LemA}.

Conversely, if $f$ and $f'$ are related by the above expression,
then the quasialgebra structures obviously coincide.
\end{proof}

Now, we can deduce Theorem \ref{Thmalpha} as a corollary of Proposition \ref{VeryFirstProp}.
Indeed, if $\A$ and $\A'$ have the same generating function $\a$, 
then the quasialgebra structure of $\A$ and $\A'$ are the same.

%%%%%%%%%%%%%%%%%%%%%%%%%%%%%%%%%%%%%%%%%%%
\subsection{Involutions}
%%%%%%%%%%%%%%%%%%%%%%%%%%%%%%%%%%%%%%%%%%%%
Let us mention one more property of generating functions.

Recall that an \textit{involution} on an algebra $\A$ is a linear map
$a\mapsto\bar{a}$ from $\A$ to $\A$ such that
$\overline{ab}=\bar{b}\,\bar{a}$ and $\bar{1}=1$,
i.e., an involution is an anti-automorphism.
Every generating function defines a graded involution
of the following particular form:
\begin{equation}
\label{AntII}
\overline{u_x}=
(-1)^{\a(x)}\,u_x,
\end{equation}

\begin{prop}
\label{Malus'}
If $\a$ is a generating function, then the linear map defined by
formula (\ref{AntII}) is an involution.
\end{prop}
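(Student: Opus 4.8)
The plan is to verify directly that the map $a\mapsto\bar a$ defined on homogeneous basis elements by $\overline{u_x}=(-1)^{\a(x)}u_x$ and extended linearly satisfies the two defining axioms of an involution, namely $\overline{1}=1$ and the anti-automorphism property $\overline{u_x\cdot u_y}=\overline{u_y}\cdot\overline{u_x}$ for all $x,y$, together with the obvious fact that it is a linear involution. Since the product and the map are both compatible with the grading, it suffices to check everything on the homogeneous generators $u_x$ and then extend by bilinearity.

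First I would dispose of the easy points. The unit is $1=u_0$, and by the remark following the definition of generating function, $\a(0)=0$, so $\overline{1}=(-1)^{\a(0)}u_0=u_0=1$. Next, the map is clearly linear, and it is an involution in the sense of being its own inverse: applying it twice multiplies $u_x$ by $(-1)^{2\a(x)}=1$, so $\overline{\overline{u_x}}=u_x$.

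The main content is the anti-automorphism identity. On the left-hand side, using $u_x\cdot u_y=(-1)^{f(x,y)}u_{x+y}$ and the definition of the involution, I would compute
\begin{equation}
\overline{u_x\cdot u_y}=(-1)^{f(x,y)}\,\overline{u_{x+y}}=(-1)^{f(x,y)+\a(x+y)}\,u_{x+y}.
\end{equation}
On the right-hand side,
\begin{equation}
\overline{u_y}\cdot\overline{u_x}=(-1)^{\a(y)+\a(x)}\,u_y\cdot u_x=(-1)^{\a(x)+\a(y)+f(y,x)}\,u_{x+y}.
\end{equation}
Comparing exponents modulo $2$, the desired equality holds if and only if $f(x,y)+\a(x+y)=\a(x)+\a(y)+f(y,x)$, i.e.
\begin{equation}
f(x,y)+f(y,x)=\a(x+y)+\a(x)+\a(y).
\end{equation}
The left-hand side is exactly $\b(x,y)$ by formula (\ref{bef}), and the right-hand side is exactly $\b(x,y)$ by the generating-function relation (\ref{Genalp1}). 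Hence the two sides agree and the anti-automorphism property follows.

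The step I expect to carry the real weight is this last comparison: the whole point is that the very definition of a generating function (\ref{Genalp1}) is precisely the polarization identity $\b=\d\a$, and $\b$ is by construction the symmetrization $f(x,y)+f(y,x)$ of the twisting function. So there is essentially no obstacle beyond lining up these two expressions for $\b$; the nontrivial input is simply that $\a$ is assumed to be a generating function, which forces the sign discrepancy between $u_x\cdot u_y$ and the reversed product $u_y\cdot u_x$ to be exactly absorbed by the signs $(-1)^{\a(x)},(-1)^{\a(y)},(-1)^{\a(x+y)}$ attached by the involution. Once the homogeneous case is settled, extending to arbitrary elements is routine bilinearity, which I would state rather than belabor.
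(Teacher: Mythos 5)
Your proof is correct and follows essentially the same route as the paper: the paper writes the computation directly in terms of the commutation factor $\b$ via the relation $u_xu_y=(-1)^{\b(x,y)}u_yu_x$ and the identity $\b=\d\a$ from (\ref{Genalp1}), whereas you unpack $\b$ as $f(x,y)+f(y,x)$ using the twisting function; the two are the same argument. No issues.
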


\begin{proof}
Using (\ref{DeffiC}) and (\ref{Genalp1}), one has
$$
\overline{u_xu_y}=
(-1)^{\a(x+y)}\,u_xu_y=
(-1)^{\a(x+y)+\b(x,y)}\,u_y\,u_x=
(-1)^{\a(x)+\a(y)}\,u_y\,u_x=
\overline{u_y} \,\overline{u_x}.
$$
Hence the result.
\end{proof}

In particular, the generating functions of $\bbH$ and $\bbO$, see Example \ref{ExGenHO},
correspond to the canonical involutions, i.e., to the conjugation.

%%%%%%%%%%%%%%%%%%%%%%%%%%%%%%%%%
%%%%%%%%%%%%%%%%%%%%%%%%%%%%%%%%%
\section{The series $\bbO_n$ and $\bbM_n$:
characterization}\label{TheMainS}
%%%%%%%%%%%%%%%%%%%%%%%%%%%%%%%%%
%%%%%%%%%%%%%%%%%%%%%%%%%%%%%%%%%

In this section, we formulate our first main result.
Theorem \ref{AlphMainTh}, concerns the general properties
of twisted $(\Z_2)^n$-algebras with $\phi=\d{}f$ symmetric.
This result distinguishes a class of algebras of which our algebras
of $\bbO$- and $\bbM$-series are the principal representatives.
We will also present several different ways to define the algebras 
$\bbO_n$ and $\bbM_n$, as well as of $\bbO_{p,q}$ and  $\bbM_{p,q}$.

%%%%%%%%%%%%%%%%%%%%%%%%%%%%%%%%%
\subsection{Symmetric quasialgebras}\label{SymmSec}
%%%%%%%%%%%%%%%%%%%%%%%%%%%%%%%%%

An arbitrary twisted group algebra leads to a quasialgebra structure.
One needs to assume some additional conditions
on the ``twisting'' function~$f$ in order to obtain an interesting
class of algebras.

We will be interested in the case where the function $\phi=\d{}f$,
see formula (\ref{phef}), is symmetric:
\begin{equation}
\label{SymEq}
\phi(x,y,z)=\phi(y,x,z)=\phi(x,z,y).
\end{equation}
This condition seems to be very natural:
it means that, if three elements,
$u_x,u_y$ and $u_z$ form a antiassociative triplet,
i.e., one has $u_x\cdot(u_y\cdot{}u_z)=-(u_x\cdot{}u_y)\cdot{}u_z$,
then this property is independent of the ordering of the elements
in the triplet.

An immediate consequence of the identity (\ref{BetPhi}) is that, 
if $\phi$ is symmetric, then it is completely determined by $\b$:
\begin{equation}
\label{PhiBet}
\begin{array}{rcl}
\phi(x,y,z)&=&\b(x+y,\,z)+\b(x,z)+\b(y,z)\\[4pt]
&=&\b(x,\,y+z)+\b(x,y)+\b(x,z),
\end{array}
\end{equation}
as the ``defect of linearity'' in each argument.

The following statement is our main result
about the general structure of a twisted
group algebra $\A=(\bbK[(\Z_2)^n],f)$.
We formulate this result in a slightly more general
context of $(\Z_2)^n$-graded quasialgebra.

\begin{thm}
\label{AlphMainTh}
Given a $(\Z_2)^n$-graded quasialgebra $\A$,
the following conditions are equivalent.

(i)
The function $\phi$ is symmetric.

(ii)
The algebra $\A$ has a generating function.
\end{thm}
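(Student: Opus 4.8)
The plan is to prove the two implications separately, exploiting the cohomological machinery set up in Sections~\ref{QuaSec}--\ref{CSec}. The direction (ii)$\Rightarrow$(i) is the easy one: if a generating function $\a$ exists, then $\phi=\d_2\a$ is given explicitly by formula (\ref{Genalp2}), and the right-hand side of (\ref{Genalp2}) is manifestly invariant under any permutation of $x,y,z$, since it is built entirely out of the symmetric expressions $\a(x+y+z)$, the three two-fold sums $\a(x+y),\a(x+z),\a(y+z)$, and the three single terms $\a(x),\a(y),\a(z)$. Hence $\phi$ is symmetric, establishing (i).

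The substantive direction is (i)$\Rightarrow$(ii): assuming $\phi$ symmetric, I must construct a function $\a:(\Z_2)^n\to\Z_2$ satisfying both (\ref{Genalp1}) and (\ref{Genalp2}). The guiding observation is that when $\phi$ is symmetric, equation (\ref{PhiBet}) already shows that $\phi$ is recovered from $\b$ as the ``defect of linearity'' in one argument. So it suffices to produce $\a$ with $\b=\d\a$, i.e.\ an $\a$ whose polarization (\ref{Genalp1}) is the given $\b$; one then hopes that (\ref{Genalp2}) follows automatically. The natural guess, motivated by the polarization remark after the definition and by the quadratic examples (\ref{ClAlp}), is to diagonalize $\b$. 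Concretely I would set
\begin{equation}
\label{AlphaGuess}
\a(x)=\b_{\mathrm{diag}}(x)+\sum_{i<j}\b(e_i,e_j)\,x_ix_j,
\end{equation}
where $e_i$ are the standard generators and the first summand collects the ``square'' contributions $\sum_i \a(e_i)\,x_i$ coming from the diagonal of $\b$; more invariantly, $\a$ is the unique function with prescribed values on the generators and on the pairwise sums that restores (\ref{Genalp1}). I would verify $\d\a=\b$ by direct expansion, checking it on basis arguments and using bilinearity of the candidate's quadratic part together with $\b(x,x)=0$.

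The key step, and the one I expect to be the main obstacle, is showing that the $\a$ so constructed also satisfies (\ref{Genalp2}), that is, $\phi=\d_2\a$. This is where symmetry of $\phi$ must be used in an essential way. The clean route is to prove the general operator identity $\d_2\a=\d(\d\a)$ in an appropriately symmetrized sense, i.e.\ that the ``second polarization'' $\d_2\a(x,y,z)$ equals the linearity defect of $\b=\d\a$ in its first argument, which by (\ref{PhiBet}) is exactly $\phi$ once we know $\b=\d\a$. Expanding $\d_2\a(x,y,z)$ using (\ref{Genalp2}) and substituting $\d\a$ for $\b$ should reduce, after cancellation of the $\a$-terms, precisely to the right-hand side of (\ref{PhiBet}); the symmetry hypothesis (\ref{SymEq}) guarantees that this single-argument defect genuinely equals the symmetric $\phi$ rather than merely one of its partial linearizations. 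I would therefore structure the argument as: (a) define $\a$ by (\ref{AlphaGuess}); (b) check $\d\a=\b$; (c) compute $\d_2\a$ and identify it, via (\ref{PhiBet}) and the symmetry of $\phi$, with $\phi$. The delicate point throughout is that all computations live over $\Z_2$, so I must track parities carefully and make sure that the purely quadratic diagonal terms $x_ix_j$ contribute correctly to $\phi$ without introducing spurious terms; verifying this consistency is the real content, and the symmetry of $\phi$ is exactly the hypothesis that makes the construction close up.
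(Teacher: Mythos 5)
Your direction (ii)$\Rightarrow$(i) is correct and is exactly the paper's: the right-hand side of \eqref{Genalp2} is visibly symmetric in $x,y,z$. The gap is in (i)$\Rightarrow$(ii), and it is fatal to the construction as written. Your candidate $\a(x)=\sum_{i<j}\b(e_i,e_j)\,x_ix_j$ (plus linear terms) is a polynomial of degree $\le 2$, and the coboundary $\d\a(x,y)=\a(x+y)+\a(x)+\a(y)$ of a quadratic polynomial is a \emph{bilinear} function of $(x,y)$. But $\b$ is bilinear only when the algebra is associative: by \eqref{PhiBet}, the defect of linearity of $\b$ in its first argument is exactly $\phi$, so $\b(x+y,z)\ne\b(x,z)+\b(y,z)$ whenever $\phi(x,y,z)=1$. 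Hence step (b) of your plan, the verification $\d\a=\b$, fails for every non-associative algebra covered by the theorem --- e.g.\ for the octonions one has $\b(e_1+e_2,e_3)=1$, while your quadratic ansatz forces $\d\a(e_1+e_2,e_3)=\b(e_1,e_3)+\b(e_2,e_3)=0$. Equivalently, $\d_2$ of a quadratic polynomial vanishes identically, so your $\a$ could never reproduce a nonzero $\phi$ through \eqref{Genalp2}; the generating function must in general be cubic (Proposition \ref{AlpDeg3}), and the cubic term is precisely what encodes $\phi$.

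What is missing is the cohomological step the paper uses. From the symmetry of $\phi$ and \eqref{PhiBet} one first deduces that $\b$ is a $2$-cocycle, $\d\b=0$ (Lemma \ref{dbetZero}). The description of $H^2((\Z_2)^n;\Z_2)$ in Section \ref{CSec} then gives $\b=\d\a+\sum_{i\in I}x_iy_i+\sum_{(k,\ell)\in J}x_ky_\ell$ for some function $\a$ that need not be quadratic (coboundaries of cubic functions are not bilinear). The identity $\b(x,x)=0$ kills the terms $x_iy_i$, and the symmetry of $\b$ forces the terms $x_ky_\ell$ to occur in pairs $x_ky_\ell+x_\ell y_k=\d(x_kx_\ell)$, which are absorbed into $\a$; hence $\b=\d\a$, which is \eqref{Genalp1}. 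Your final step is then sound: once $\b=\d\a$ is known, substituting into \eqref{PhiBet} yields \eqref{Genalp2} after a short cancellation, so no separate argument is needed there. In summary, your scheme (a)--(c) is right in outline, but (a) must produce $\a$ by solving the cocycle equation for $\b$, not by quadratic interpolation of $\b$ on the generators.
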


\noindent
This theorem will be proved in Section \ref{ExProoS}.

It is now natural to ask under what condition
a function $\a:(\Z_2)^n\to\Z_2$ is a generating function
for some twisted group algebra.
The following statement provides a necessary and sufficient condition.

\begin{prop}
\label{AlpDeg3}
Given a function $\a:(\Z_2)^n\to\Z_2$,
there exists a twisted group algebra~$\A$ such that $\a$
is a generating function of $\A$, if and only if
$\a$ is a polynomial of degree $\leq3$.
\end{prop}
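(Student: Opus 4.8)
The plan is to reduce the statement to a clean cohomological identity relating the ``polarization'' operators. For $k\geq 1$ introduce the operator $\d_k$ sending a function $\a:(\Z_2)^n\to\Z_2$ to the $(k+1)$-cochain
\[
\d_k\a(x^{(1)},\ldots,x^{(k+1)})=\sum_{\emptyset\neq S\subseteq\{1,\ldots,k+1\}}\a\Big(\textstyle\sum_{i\in S}x^{(i)}\Big),
\]
so that $\b=\d_1\a=\d\a$ and $\phi=\d_2\a$ are exactly the functions of (\ref{Genalp1}) and (\ref{Genalp2}). Two facts drive the proof. First, \emph{$\d_k$ detects degree}: expanding a squarefree monomial $x_{i_1}\cdots x_{i_d}$ shows that $\d_k\a$ equals, term by term, the sum over \emph{surjective} colourings $\{1,\ldots,d\}\to\{1,\ldots,k+1\}$ of the corresponding degree-$d$ monomial in the variables $x^{(a)}_{i_l}$; distinct colourings give distinct monomials, and $\d_k$ preserves total degree, so $\d_k\a=0$ if and only if $\deg\a\leq k$. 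Second, the \emph{key identity}
\[
\d\circ\d_2=\d_3 ,
\]
which I will check by a direct count: expanding $\d(\d_2\a)(x,y,z,w)$ through the coboundary formula, each of the $15$ nonempty subset-sums $\a(\sum_{s\in S}s)$, $\emptyset\neq S\subseteq\{x,y,z,w\}$, occurs an odd number of times, and this is precisely $\d_3\a$.

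Granting these, the ``only if'' direction is immediate. If $\a$ is a generating function of some twisted group algebra $\A=(\bbK[(\Z_2)^n],f)$, then by (\ref{phef}) its associativity defect is a coboundary, $\phi=\d_2\a=\d f$. Hence $\d_3\a=\d(\d_2\a)=\d(\d f)=0$ by $\d^2=0$, and the degree-detection fact forces $\deg\a\leq 3$.

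For the ``if'' direction I will exhibit $f$ explicitly. Writing $\a$ in minimal form $\a=\sum_i\l_i x_i+\sum_{i<j}\l_{ij}x_ix_j+\sum_{i<j<k}\l_{ijk}x_ix_jx_k$, set
\[
f(x,y)=\sum_{i<j}\l_{ij}\,x_iy_j+\sum_{i<j<k}\l_{ijk}\big(x_ix_jy_k+x_iy_jx_k+y_ix_jx_k\big).
\]
It then suffices, by $\Z_2$-linearity, to verify on a single monomial of each degree $\leq 3$ that $f+f^{\mathrm T}=\d\a$ and $\d f=\d_2\a$, where $f^{\mathrm T}(x,y)=f(y,x)$. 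The linear monomials contribute nothing to either side; the quadratic terms are bilinear, hence $2$-cocycles, and their symmetrization reproduces $\b$; the cubic terms are exactly the $\bbO$/$\bbM$-type twisting functions, whose symmetrization is $\d(x_ix_jx_k)$ and whose coboundary is the symmetric trilinear form $\d_2(x_ix_jx_k)$. Thus (\ref{Genalp1}) and (\ref{Genalp2}) hold for this $f$, so $\a$ is a generating function of $(\bbK[(\Z_2)^n],f)$.

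The main obstacle is the identity $\d\circ\d_2=\d_3$ together with the sharp degree-detection property of $\d_3$; everything else is bookkeeping once these are in place. The degree count must be done carefully enough to see that no cancellation occurs among the top-degree monomials (squarefreeness of the minimal form is what guarantees that distinct colourings yield distinct monomials), and the coboundary computation for the cubic piece in the ``if'' direction is the one genuinely computational check, which is already implicit in the octonionic associator.
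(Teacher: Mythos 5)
Your proof is correct and follows essentially the same route as the paper: the ``only if'' direction via $\d_3\a=\d(\d_2\a)=\d(\d f)=0$ (the paper's Lemma \ref{KeyLem}) combined with the fact that $\d_3\a=0$ characterizes polynomials of degree $\leq 3$ (Lemma \ref{ThOrdProp}), and the ``if'' direction via the explicit polarization rule for $f$, which is exactly the paper's formula (\ref{Fakset}) in Section \ref{FroSec}. The only difference is that you spell out two steps the paper leaves implicit or cites as elementary, namely the surjective-colouring proof of the degree-detection lemma and the subset count verifying $\d\circ\d_2=\d_3$; both checks are sound.
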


\noindent
This proposition will be proved in Section \ref{CubSecG}.
Furthermore, we will show in Section \ref{Unisex} that
the generating function can be chosen in a canonical way.

Theorem \ref{AlphMainTh} has a number of consequences.
In particular, it implies two more important properties of $\phi$.
The function $\phi$ is called \textit{trilinear} if it satisfies
\begin{equation}
\label{AddiT}
\phi(x+y,z,t)=\phi(x,z,t)+\phi(y,z,t),
\end{equation}
and similarly in each argument.
The function $\phi$ is \textit{alternate} if it satisfies
\begin{equation}
\label{AlT}
\phi(x,x,y)=\phi(x,y,x)=\phi(y,x,x)=0,
\end{equation}
for all $x,y\in(\Z_2)^n$.

Let us stress that an algebra satisfying \eqref{AlT}
is \textit{graded-alternative},i.e., 
$$
u_x\cdot(u_x\cdot{}u_y)=u_x^2\cdot{}u_y
\qquad
(u_y\cdot{}u_x)\cdot{}u_x=u_y\cdot{}u_x^2,
$$
for all homogeneous elements $u_x$ and~$u_y$.
This does not imply that the algebra is alternative.
Let us mention that alternative graded quasialgebras were classified in
\cite{AE}.

The following result is a consequence of Theorem \ref{AlphMainTh} and Proposition \ref{AlpDeg3}.

\begin{cor}
\label{TriCol}
If the function $\phi$ is symmetric, then $\phi$ is trilinear and alternate.
\end{cor}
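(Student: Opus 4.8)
The plan is to read the statement off directly from the two results just quoted. Since $\phi$ is symmetric, Theorem~\ref{AlphMainTh} provides a generating function $\a$ for $\A$, and Proposition~\ref{AlpDeg3} tells us that, in its minimal (square-free) representation, $\a$ is a polynomial of degree $\le 3$; moreover $\phi=\d_2\a$ is given by the right-hand side of (\ref{Genalp2}). As the operator $\d_2$ is $\Z_2$-linear in $\a$, it is enough to understand its effect on each monomial of $\a$ separately and then to add the contributions.

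First I would dispose of the low-degree part. A direct expansion shows that $\d_2$ annihilates every monomial of degree $1$ and $2$: in each case the terms of (\ref{Genalp2}) collect with even multiplicity and vanish modulo~$2$. The only thing to watch is that $\d_2$ does \emph{not} kill a constant, since its seven evaluations in (\ref{Genalp2}) sum to $7c=c$ for a constant $c$; but a generating function satisfies $\a(0)=0$, so no constant term is present. Hence only the homogeneous cubic part of $\a$ contributes to $\phi$.

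Second, for a cubic monomial $\a=x_ix_jx_k$ with $i<j<k$ I would check, by the standard inclusion-exclusion (polarization) identity, that
$$
\d_2(x_ix_jx_k)(x,y,z)=\sum_{(p,q,r)} x_{p}\,y_{q}\,z_{r},
$$
the sum being taken over the six orderings $(p,q,r)$ of the triple $(i,j,k)$. This complete polarization is additive in each slot, since every summand is linear in each of $x,y,z$; this is exactly trilinearity (\ref{AddiT}). It is also alternate: setting two arguments equal groups the six terms into three pairs that cancel modulo~$2$, which is (\ref{AlT}). Summing over the cubic monomials of $\a$ then shows that $\phi$ itself is trilinear and alternate, as claimed.

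I expect the genuine content to sit in this last polarization step, namely in verifying that the $\d_2$-image of a cubic is actually \emph{additive} and not merely symmetric; both this and the vanishing on lower degrees rest on the elementary fact that a finite difference lowers the degree of a $\Z_2$-polynomial by one. This also yields a shortcut worth recording: alternateness follows with no computation from the symmetric-case relation (\ref{PhiBet}), since putting $y=z$ gives $\phi(x,y,y)=\b(x,0)=0$ and full symmetry then forces $\phi$ to vanish whenever two arguments coincide; and trilinearity reduces, again via (\ref{PhiBet}), to the identity $\phi(a+b,c,d)+\phi(a,c,d)+\phi(b,c,d)=\d_2 g(a,b,c)$ with $g(\,\cdot\,)=\b(\,\cdot\,,d)$, where $g$ has degree $\le 2$ and $g(0)=0$, so that $\d_2 g=0$.
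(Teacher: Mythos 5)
Your proof is correct, but its main body takes a more computational route than the paper's. The paper proves trilinearity by verifying the single identity $\phi(x+y,z,t)+\phi(x,z,t)+\phi(y,z,t)=\d_3\a(x,y,z,t)$ and invoking Lemma \ref{KeyLem} ($\d_3\a=0$, a consequence of the cocycle relation $\d\phi=0$); alternateness is then read off from \eqref{PhiBet}, exactly as in your closing remark. You instead decompose $\a$ into monomials (legitimately, via Proposition \ref{AlpDeg3}), kill the parts of degree $\leq 2$ by an even-multiplicity count (correctly flagging that constants are \emph{not} killed by $\d_2$ and are absent only because $\a(0)=0$), and exhibit $\d_2(x_ix_jx_k)$ as the full polarization $\sum x_p y_q z_r$ over the six orderings, which is visibly trilinear and alternate. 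What this buys is an explicit closed formula for $\phi$ in terms of the cubic coefficients of $\a$ (consistent with $\phi=\sum_{i\not=j\not=k}x_iy_jz_k$ for $\bbO_n$ and $\bbM_n$), at the price of a term-by-term verification; the paper's identity is shorter and stays entirely at the level of the difference calculus. Note that both arguments ultimately rest on the same fact $\d_3\a=0$: you use it indirectly, through Proposition \ref{AlpDeg3}, to bound $\deg\a$, while the paper uses it directly. Your final ``shortcut'' essentially reproduces the paper's proof: the alternateness argument is identical, and your identity $\phi(a+b,c,d)+\phi(a,c,d)+\phi(b,c,d)=\d_2 g(a,b,c)$ with $g=\b(\cdot\,,d)$ coincides with the paper's once one observes that $\d_2 g(a,b,c)=\d_3\a(a,b,c,d)$.
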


\noindent
This corollary will be proved
in Section \ref{CubSecG}.

Our next goal is to study two series of algebras
with symmetric function $\phi=\d{}f$.
Let us notice that the Cayley-Dickson algebras are not of this type,
cf. \cite{AM}.

%%%%%%%%%%%%%%%%%%%%%%%%%%%%%%%%%
\subsection{The generating functions
of the algebras $\bbO_{n}$ and $\bbM_{n}$}\label{DefAlgNash}
%%%%%%%%%%%%%%%%%%%%%%%%%%%%%%%%%

We already defined the complex algebras $\bbO_{n}$ and $\bbM_{n}$,
with $n\geq3$ and the real algebra $\bbO_{p,q}$ and $\bbM_{p,q}$
see Introduction, formul{\ae} (\ref{NashProd}) and (\ref{ForgProd}).

Let us now calculate the associated function $\phi=\d{}f$
which is exactly the same for $f=f_\bbO$ or $f_\bbM$.
One obtains
$$
\phi(x,y,z)=\sum_{i\not=j\not=k}x_iy_jz_k.
$$
This function is symmetric in $x,y,z$ and Theorem \ref{AlphMainTh} implies that
the algebras $\bbO_n$ and $\bbM_{n}$ have generating functions.
The explicit formul{\ae} are as follows:
\begin{equation}
\label{NashAlp}
\a_\bbO(x)=\sum_{1\leq{}i<j<k\leq{}n}
x_ix_jx_k+
\sum_{1\leq{}i<j\leq{}n}\,x_ix_j+
\sum_{1\leq{}i\leq{}n}x_i,
\end{equation}
and
\begin{equation}
\label{NashAlpBis}
\a_\bbM(x)=\sum_{1\leq{}i<j<k\leq{}n}
x_ix_jx_k+
\sum_{1\leq{}i\leq{}n}x_i.
\end{equation}
Note that the generating functions  $\a_\bbO$ and $\a_\bbM$
 are $\gS_n$-\textit{invariant}
with respect to the natural action of the group of permutations $\gS_n$ on~$(\Z_2)^n$.

Thanks to the $\gS_n$-invariance, we can give a very simple description
of the above functions.
Denote by $|x|$ the \textit{weight} of $x\in\left(\Z_2\right)^n$
(i.e., the number of 1 entries in $x$ written as an $n$-tuple of 0 and 1).
The above generating functions, together
with that of the Clifford algebras depend only on $|x|$ and are 4-periodic:
\begin{equation}
\label{GenFuncTab}
\setlength{\extrarowheight}{3pt}
\begin{array}{|c||c|c|c|c|c|c|c|c|c}
|x| & 1 & 2 & 3 &4 &5 &6 &7 &8 &\cdots\\
\hline
\hline
\a_{\Cl} & 1 & 1 & 0 & 0 & 1 &1 &0 &0 &\cdots\\
\hline
\a_\bbO & 1 & 1 & 1 & 0 & 1& 1 & 1 & 0 &\cdots\\
\hline
\a_\bbM & 1 & 0 & 0 & 0 & 1& 0 & 0 & 0 &\cdots\\
\hline
\end{array}
\end{equation}
This table is the most simple way to use the generating function
in any calculation.
One can deduce the explicit formul{\ae} (\ref{ClAlp}), (\ref{NashAlp})
and (\ref{NashAlpBis})
directly from the table (\ref{GenFuncTab}).

%%%%%%%%%%%%%%%%%%%%%%%%%%%%%%%%%
\subsection{Characterization of the algebras of the
$\bbO$- and~$\bbM$-series}\label{UniqReSec}
%%%%%%%%%%%%%%%%%%%%%%%%%%%%%%%%%

Let us formulate two uniqueness results
that allow us to give axiomatic definitions of the introduced algebras.

Recall that the group of permutations $\gS_n$ acts on $(\Z_2)^n$
in a natural way.
We will characterize the algebras of the
$\bbO$- and~$\bbM$-series in terms of $\gS_n$-invariance.
We observe that, in spite of the fact that the functions $f_\bbO$ and $f_\bbM$
are not $\gS_n$-invariant, the corresponding algebras are.
However, we believe that $\gS_n$-invariance is a technical assumption and can be relaxed,
see Appendix for a discussion.

The first uniqueness result is formulated
directly in terms of the twisting function $f$.
We study the unital twisted algebras $\A=(\bbK\left[\left(\Z_2\right)^n\right],\,f)$
satisfying the following conditions.

\begin{enumerate}

\item
The function $f$ is a polynomial of degree 3.

\item
The algebra $\A$ is graded-alternative, see (\ref{AlT}).

\item
The set of relations between  the generators (\ref{General}) of $\A$ is
invariant with respect to the action
of the group of permutations $\gS_n$.

\end {enumerate}

Since we will use the relations of degree 2 or 3,
the condition (3) means that we assume that the generators
either all pairwise commute or all pairwise anticommute and that
one has either
$$
u_i\cdot{}(u_j\cdot{}u_k)=(u_i\cdot{}u_j)\cdot{}u_k,
\qquad
\hbox{for all}
\qquad
i\not=j\not=k,
$$
or
$$
u_i\cdot{}(u_j\cdot{}u_k)=-(u_i\cdot{}u_j)\cdot{}u_k,
\qquad
\hbox{for all}
\qquad
i\not=j\not=k.
$$

\begin{prop}
\label{Charact}
The algebras $\bbO_n$ and $\bbM_n$ are the only
twisted $\left(\Z_2\right)^n$-algebras satisfying the above three conditions.
\end{prop}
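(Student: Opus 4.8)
The plan is to show that the three conditions pin down the quasialgebra structure $(\b,\phi)$ of $\A$ completely, and then invoke Proposition~\ref{VeryFirstProp} to identify $\A$ with $\bbO_n$ or $\bbM_n$. The difficulty is that the hypotheses constrain $\b$ and $\phi$ only on the generators (\ref{General}), so the real work is to propagate this local information to the whole of $(\Z_2)^n$.

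First I would analyze $\phi=\d f$. Since $f$ has degree $\leq 3$ and satisfies $f(0,\cdot)=f(\cdot,0)=0$, it is a sum of monomials of the three types $x_iy_j$, $x_ix_jy_k$ and $x_iy_jy_k$; a direct application of $\d$ to each type shows that $\d(x_iy_j)=0$ while $\d(x_ix_jy_k)$ and $\d(x_iy_jy_k)$ are trilinear, so $\phi$ is trilinear. Condition (2) makes $\phi$ alternate, and over $\Z_2$ a trilinear alternate form is automatically symmetric: expanding $0=\phi(x+y,x+y,z)$ gives $\phi(x,y,z)=\phi(y,x,z)$, and likewise in the last two arguments, so $\phi$ is $\gS_3$-symmetric (this is Corollary~\ref{TriCol}, obtained here from (1) and (2)).

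Next, condition (3) forces $\phi(e_i,e_j,e_k)$ to take one common value $c\in\{0,1\}$ on all triples of distinct indices; being symmetric, trilinear and alternate, $\phi$ is therefore $c\cdot\sum_{i\neq j\neq k}x_iy_jz_k$. Here condition (1) enters decisively: an $f$ of genuine degree $3$ is not a $2$-cocycle, since any cocycle is cohomologous to a quadratic twisting function whose $\phi$ vanishes; hence $\phi\neq 0$ and $c=1$, giving $\phi=\sum_{i\neq j\neq k}x_iy_jz_k$, exactly the common function of the $\bbO$- and $\bbM$-series. It then remains to recover $\b$. Condition (3) also yields a common value $\b_0:=\b(e_i,e_j)\in\{0,1\}$ for $i\neq j$, with $\b(e_i,e_i)=0$. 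Because $\phi$ is symmetric, identity (\ref{PhiBet}) reads $\b(x,y+z)=\b(x,y)+\b(x,z)+\phi(x,y,z)$, which together with symmetry of $\b$ lets me reconstruct $\b$ on all pairs inductively from its values on the generators. Thus $\b$ is the \emph{unique} symmetric function with the given $\b_0$ and defect $\phi$; running the same reconstruction for $\bbO_n$ (where $\b_0=1$, anticommuting generators) and $\bbM_n$ (where $\b_0=0$, commuting generators) shows that the quasialgebra structure of $\A$ coincides with that of $\bbO_n$ when $\b_0=1$ and with that of $\bbM_n$ when $\b_0=0$. Proposition~\ref{VeryFirstProp} then gives $\A\cong\bbO_n$ (resp. $\bbM_n$) over $\C$, and over $\R$ the same identification holds up to signature, i.e. $\A$ is one of the real forms $\bbO_{p,q}$ (resp. $\bbM_{p,q}$).

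The main obstacle I anticipate is the correct use of condition (1): the hypotheses are genuinely local, and "degree $3$" must be read as "not reducible to degree $\leq2$ by a coboundary," for otherwise a cocycle written in cubic form, such as $\d\!\left(\sum_{i<j<k}x_ix_jx_k\right)$ plus a signature term, would satisfy (1)--(3) while being associative. Once this is arranged so that $\phi\neq0$, the remaining arguments are the bookkeeping above. A secondary point to treat carefully is that the reconstruction of $\b$ from $\b_0$ and $\phi$ is well defined, i.e. independent of the order in which the arguments are expanded; this consistency is guaranteed by the hexagonal identities (\ref{BetPhi}), which hold automatically because $\b$ and $\phi$ arise from an actual twisting function $f$.
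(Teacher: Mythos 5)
Your proof is correct, but it is organized quite differently from the paper's. The paper writes down the most general cubic twisting function $f$ with undetermined coefficients $\l^e_{ijk},\mu^e_{ijk},\nu_{ij}$, normalizes by a coboundary, computes $\phi=\d f$ explicitly, and solves the resulting linear system to force $f=f_\bbO$ or $f=f_\bbM$. You instead never touch the coefficients of $f$: you determine the quasialgebra data $(\b,\phi)$ abstractly --- trilinearity of $\phi$ from $\deg f\le 3$, alternativity from condition (2), symmetry from the two combined, $\gS_n$-invariance pinning $\phi$ down to one bit $c$, and then the polarization identity (\ref{PhiBet}) reconstructing $\b$ from its values on generators --- and finally invoke Proposition \ref{VeryFirstProp} to convert equality of quasialgebra structures into an isomorphism. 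This is essentially the ``invariant'' version of the same computation, closer in spirit to the paper's proof of Proposition \ref{SimSimProp}; what it buys is the elimination of coefficient bookkeeping, at the price of leaning on Proposition \ref{VeryFirstProp} and identity (\ref{PhiBet}) as black boxes. Two further remarks. First, your observation that condition (1) must be read as ``not cohomologous to a function of degree $\le 2$'' (otherwise $\d\bigl(\sum_{i<j<k}x_ix_jx_k\bigr)$ plus a signature term is an associative counterexample) is a genuine subtlety that the paper's proof also relies on but does not make explicit --- the paper dismisses the associative case with ``so that $\deg f=2$'' only after having normalized $f$ by a coboundary, which is exactly your reading. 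Second, your closing remark about the real case (identification only up to signature, yielding $\bbO_{p,q}$ or $\bbM_{p,q}$) is the correct scope of the conclusion; the proposition as stated concerns the complex algebras, where Proposition \ref{SignProp} absorbs the diagonal terms $\nu_{ii}x_iy_i$ that the paper's computation leaves undetermined.
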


\begin{proof}
Since the algebra $\A$ is unital, we have $f(0,.)=f(.,0)=0$.
This implies that $f$ contains no constant term and no terms depending
only on $x$ (or only on $y$) variables.
The most general twisting function $f$ of degree 3 is of the form

\begin{equation*}
\begin{array}{rcll}
f(x,y)&=&\displaystyle
\sum_{i<j<k}&\left(
\l^1_{ijk}\,x_ix_jy_k+\l^2_{ijk}\,x_iy_jx_k+\l^3_{ijk}\,y_ix_jx_k\right.\\
&&&\left.+\mu^1_{ijk}\,y_iy_jx_k+\mu^2_{ijk}\,y_ix_jy_k+\mu^3_{ijk}\,x_iy_jy_k\right)\\[8pt]
&&+\displaystyle\sum_{i,j}&\nu_{ij}\,x_iy_j,
\end{array}
\end{equation*}
where $\l^e_{ijk},\mu^e_{ijk}$ and $\nu^e_{ij}$ are arbitrary coefficients 0 or 1.
Indeed, the expression of $f$ cannot contain the monomials $x_ix_jy_j$
and $x_iy_iy_j$ because of the condition (2).

By Lemma \ref{LemA}, adding a coboundary to $f$ gives an isomorphic algebra
(as $(\Z_2)^n$-graded algebras).
We may assume that for any $i<j<k$, the coefficient $\mu^1_{ijk}=0$
(otherwise, we add the coboundary of $b(x)=x_ix_jx_k$).

We now compute $\phi=\d{}f$ and obtain:
$$
\begin{array}{rcl}
\phi(x,y,z)&=&\displaystyle
\sum_{i<j<k}\left(
(\l^1_{ijk}+\mu^3_{ijk})\,x_iy_jz_k+
(\l^2_{ijk}+\mu^3_{ijk})\,x_iz_jy_k+
(\l^1_{ijk}+\mu^2_{ijk})\,y_ix_jz_k\right.\\[6pt]
&&\hskip 1cm
\left.+\l^2_{ijk}\,y_iz_jx_k+
(\l^3_{ijk}+\mu^2_{ijk})\,z_ix_jy_k+
\l^3_{ijk}\,x_iy_jz_k\right).
\end{array}
$$
We can assume that
$$
u_i\cdot{}(u_j\cdot{}u_k)=-(u_i\cdot{}u_j)\cdot{}u_k,
\qquad
i\not=j\not=k.
$$
Indeed, if $u_i\cdot{}(u_j\cdot{}u_k)=(u_i\cdot{}u_j)\cdot{}u_k$ for some values of $i,j,k$
such that $i\not=j\not=k$, then (3) implies the
same associativity relation for  all $i,j,k$.
Since $\phi$ is trilinear, this means that $\A$ is associative, so that $\phi=0$.
This can only happen if $\l^e_{ijk}=\mu^e_{ijk}=0$ for all $i,j,k$, so that $\deg\,f=2$.

In other words, we obtain a system of equations $\phi(x_i,y_j,z_k)=1$ for all $i,j,k$.
This system has a unique solution $\l^1_{ijk}=\l^2_{ijk}=\l^3_{ijk}=1$
and $\mu^2_{ijk}=\mu^3_{ijk}=0$.

Finally, if all of the generators commute, we obtain $\nu_{ij}=\nu_{ji}$, so that
$\nu_{ij}=0$ up to a coboundary, so that $f=f_\bbM$. 
If all of the generators anticommute,
again up to a coboundary, we obtain $\nu_{ij}=1$, if and only if $i<j$, so that $f=f_\bbO$.
\end{proof}

The second uniqueness result is formulated
in terms of the generating function.

\begin{prop}
\label{SimSimProp}
The algebras $\bbO_n$ and $\bbM_n$
and the algebras~$\bbO_{p,q}$ and~$\bbM_{p,q}$ with $p+q=n$,
are the only non-associative twisted $(\Z_2)^n$-algebras over
the field of scalars $\C$ or $\R$ that admit
an $\gS_n$-invariant generating function.
\end{prop}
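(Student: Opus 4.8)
The goal is to classify all non-associative twisted $(\Z_2)^n$-algebras (over $\C$ or $\R$) that admit an $\gS_n$-invariant generating function $\a$, and to show these are exactly $\bbO_n,\bbM_n$ (and their real forms $\bbO_{p,q},\bbM_{p,q}$). By Theorem~\ref{Thmalpha} and Proposition~\ref{VeryFirstProp}, a complex twisted algebra is determined up to graded isomorphism by its generating function, and in the real case by the generating function together with the signature. So the classification reduces to a purely combinatorial problem: \emph{determine all $\gS_n$-invariant functions $\a:(\Z_2)^n\to\Z_2$ that arise as generating functions of a non-associative algebra.} I would carry this out entirely at the level of the function $\a$.

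\textbf{Step 1 (reduce to a function of the weight).} Since $\a$ is $\gS_n$-invariant and $\gS_n$ acts on $(\Z_2)^n$ by permuting coordinates, $\a(x)$ depends only on the weight $|x|$. Thus $\a$ is equivalent to a single function $a:\{0,1,\dots,n\}\to\Z_2$ with $a(0)=0$ (recall $\a(0)=0$ from the definition of generating function). This is the key simplification: an a priori huge classification collapses to choosing a binary sequence $a(1),a(2),\dots,a(n)$.

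\textbf{Step 2 (impose that $\a$ is a polynomial of degree $\le 3$).} By Proposition~\ref{AlpDeg3}, a function is a generating function for some twisted group algebra if and only if it is a polynomial of degree $\le 3$ in the $x_i$. I would translate the degree-$\le 3$ condition into a constraint on the weight-sequence $a(|x|)$. The clean way is to use a finite-difference / Gregory-Newton expansion: an $\gS_n$-invariant polynomial of degree $\le 3$ in the $x_i$ is exactly a $\Z_2$-combination of the elementary symmetric functions $e_0,e_1,e_2,e_3$ (where $e_k=\sum_{i_1<\cdots<i_k}x_{i_1}\cdots x_{i_k}$), and on weight-$m$ vectors one has $e_k(x)=\binom{m}{k}\bmod 2$. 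Hence the admissible weight-functions are precisely
\[
a(m)=c_1\binom{m}{1}+c_2\binom{m}{2}+c_3\binom{m}{3}\pmod 2,\qquad c_1,c_2,c_3\in\Z_2,
\]
with $a(0)=0$ automatic. This gives at most $2^3=8$ candidate generating functions.

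\textbf{Step 3 (discard the degenerate ones and match the survivors).} Among the eight candidates I would first eliminate those giving associative algebras: by the remark after formula~\eqref{Genalp2}, $\A$ is associative iff $\phi=\d_2\a=0$, and $\d_2\a$ vanishes precisely when $\a$ has degree $\le 2$, i.e.\ $c_3=0$. Since we want \emph{non-associative} algebras, I impose $c_3=1$, leaving four candidates, parametrized by $(c_1,c_2)\in\Z_2^2$. Reading off the $4$-periodic table~\eqref{GenFuncTab}, the two cases $\a_\bbO$ (the value $a(m)=1$ for $m\equiv 1,2,3\ (\mathrm{mod}\ 4)$, i.e.\ $c_1=c_2=c_3=1$) and $\a_\bbM$ ($a(m)=1$ only for $m\equiv1\ (\mathrm{mod}\ 4)$, i.e.\ $c_1=1,c_2=c_3=0$) are exactly two of the four. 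The remaining two candidates differ from $\a_\bbM$ and $\a_\bbO$ by adding $e_2$ (resp.\ removing it); since $e_2=\a_{\Cl}$ is the Clifford generating function and $\d_2 e_2=0$, adding $e_2$ changes $\a$ by a degree-$2$ polynomial, hence changes $\b$ but leaves $\phi$ untouched. I would argue that this extra quadratic term can be absorbed: it corresponds exactly to replacing commuting generators by anticommuting ones (or conversely), which is what distinguishes the $\bbM$-series from the $\bbO$-series. Concretely, $c_2$ toggles between ``all generators commute'' and ``all generators anticommute,'' so the four surviving $\a$'s give exactly the two commutation types, i.e.\ exactly $\bbM_n$ and $\bbO_n$ up to the choice of $c_2$ — and the two values of $c_2$ for fixed $(c_1,c_3)=(1,1)$ give $\bbO_n$ and $\bbM_n$ respectively. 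Finally, invoking Theorem~\ref{Thmalpha} (complex case) and the signature discussion of Section~\ref{SigSec} (real case) upgrades the equality of generating functions to graded isomorphism, producing exactly $\bbO_n,\bbM_n$ over $\C$ and the real forms $\bbO_{p,q},\bbM_{p,q}$ over $\R$.

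\textbf{Main obstacle.} The delicate point is Step~3: showing that the two ``extra'' candidates do not yield genuinely new algebras but rather collapse onto $\bbO_n$ or $\bbM_n$. This hinges on correctly interpreting the role of the quadratic term $e_2$ — that modifying $\a$ by $e_2$ affects only $\b$ (the commutation data) and not $\phi$ (the associativity data), so it merely switches the commuting/anticommuting alternative already built into the definitions of $\bbM_n$ and $\bbO_n$. Care is needed to confirm that once the cubic part $c_3 e_3=e_3$ and the commutation type are fixed, the linear term $c_1 e_1$ is also forced (it records the squares $u_i^2=\pm1$, i.e.\ the signature in the real case and a harmless normalization in the complex case), so that no further freedom remains. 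Verifying that these identifications respect the grading, and that in the real case the signature accounts for the transition between $\bbO_{p,q}$ with different $(p,q)$, is where the bookkeeping must be done carefully.
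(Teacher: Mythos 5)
Your Steps 1 and 2 reproduce the paper's argument: an $\gS_n$-invariant generating function must be a $\Z_2$-combination $c_1e_1+c_2e_2+c_3e_3$ of elementary symmetric polynomials (the constant term being killed by $\a(0)=0$), and non-associativity forces $c_3=1$; the binomial-coefficient description of the weight function is a nice touch. The problem is Step 3, where the collapse from four candidates to two algebras is attributed to the wrong term. The two candidates other than $\a_\bbO=e_1+e_2+e_3$ and $\a_\bbM=e_1+e_3$ are $e_2+e_3$ and $e_3$; each differs from $\a_\bbO$, resp.\ $\a_\bbM$, by the \emph{linear} term $e_1$, not by $e_2$. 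The correct (and very short) reason they give nothing new is the one the paper uses: $e_1$ is a $1$-cocycle, so it contributes nothing to $\b=\d\a$ and $\phi=\d_2\a$, hence nothing to the algebra, and $c_1$ is a free normalization parameter. By contrast, the quadratic term $e_2$ \emph{cannot} be ``absorbed'': it changes $\b$ and is precisely what separates the two series, which are genuinely non-isomorphic (Corollary \ref{NoisomProp}). Your text asserts both that $e_2$ can be absorbed and that $c_2$ toggles the commutation type between the $\bbO$- and $\bbM$-series; only the second is right, and the first, taken literally, would wrongly identify $\bbO_n$ with $\bbM_n$.

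A second, related error sits in your ``main obstacle'' paragraph: the linear term is not ``forced'' and does not ``record the squares $u_i^2=\pm1$.'' The generating function is blind to the signature --- that information lives in the twisting function $f$ (two twisting functions differing by $\sum_k x_{i_k}y_{i_k}$ have identical $\b$ and $\phi$, hence admit the same $\a$), which is exactly why all the real forms $\bbO_{p,q}$, $\bbM_{p,q}$ with $p+q=n$ appear in the statement alongside a single complex algebra for each series. (Also a typo: you list $\a_\bbM$ as $c_1=1$, $c_2=c_3=0$, whereas $c_3=1$ there.) With Step 3 replaced by the one-line observation about $e_1$, your proof coincides with the paper's.
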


\begin{proof}
By Proposition \ref{AlpDeg3}, we know that the generating function is a polynomial of degree~$\leq3$.
Every $\gS_n$-invariant polynomial
$\a:(\Z_2)^n\to\Z_2$ of degree $\leq3$ is a linear combination
$$
\a=\l_3\a_3+\l_2\a_2+\l_1\a_1+\l_0\a_0
$$
of the following four
functions:
$$
\a_3(x)=\sum_{1\leq{}i<j<k\leq{}n}
x_ix_jx_k,
\qquad
\a_2(x)=\sum_{1\leq{}i<j\leq{}n}\,x_ix_j,
\qquad
\a_1(x)=\sum_{1\leq{}i\leq{}n}x_i,
\qquad
\a_0(x)=1.
$$
Since $\a(0)=0$, cf. Section \ref{DefGenFn}, one obtains $\l_0=0$.
The function $\a_1$ does not contribute to the quasialgebra structure
$\b=\d\a$ and $\phi=\d_2\a$, so that $\l_1$ can be chosen arbitrary.
Finally, $\l_3\not=0$ since otherwise $\phi=0$ and the corresponding
algebra is associative.
We obtain the functions $\a_\bbO=\a_3+\a_2+\a_1$ and $\a_\bbM=\a_3+\a_1$
as the only possible $\gS_n$-invariant generating function
that define non-associative algebras.
\end{proof}

Note that relaxing the non-associativity condition $\phi\not\equiv0$, will also recover
the Clifford algebras $\Cl_n$ and $\Cl_{p,q}$ and the group algebra itself.

%%%%%%%%%%%%%%%%%%%%%%%%%%%%%%%%%
\subsection{Generators and relations}\label{GenRelSect}
%%%%%%%%%%%%%%%%%%%%%%%%%%%%%%%%%

Let us now give another definition of the complex algebras~$\bbO_n$ and~$\bbM_n$
and of the real algebras $\bbO_{p,q}$ and $\bbM_{p,q}$.
We use a purely algebraic approach and present our
algebras in terms of generators and relations.

Consider the generators \eqref{General}.
The generators of $\bbO_{p,q}$ and $\bbM_{p,q}$ square to $\pm1$.
More precisely,
\begin{equation}
\label{Squar}
u_i^2=
\left\{
\begin{array}{ll}
1, &i\leq{}p\\
-1,&\hbox{otherwise},
\end{array}
\right.
\end{equation}
where $1=u_{(0,\ldots,0)}$ is the unit.
For the complex algebras~$\bbO_n$ and~$\bbM_n$,
one can set $u_i^2=1$ for all $i$.

The rest of the relations is independent of the signature.
The main difference between the series $\bbO$ and $\bbM$ is that
the generators \textit{anticommute} in the $\bbO$-case
and \textit{commute} in the $\bbM$-case:
\begin{equation}
\label{AntiComCom}
u_i\cdot{}u_j=-u_j\cdot{}u_i
\quad
\hbox{in}\quad
\bbO_n,\;\bbO_{p,q}
\qquad
u_i\cdot{}u_j=u_j\cdot{}u_i
\quad
\hbox{in}\quad
\bbM_n,\;\bbM_{p,q}.
\end{equation}

The third-order relations are determined by the function $\phi$
and therefore these relations are the same for both series:
\begin{eqnarray}
\label{Alternass}
u_i\cdot{}(u_i\cdot{}u_j)&=&u_i^2\cdot{}u_j,\\[4pt]
\label{Antiass}
u_i\cdot{}(u_j\cdot{}u_k)&=&-(u_i\cdot{}u_j)\cdot{}u_k,
\end{eqnarray}
where $i\not=j\not=k$ in the second relation.
Note that the antiassociativity relation in (\ref{Antiass})
is the reason why the algebras from the
$\bbM$ series generated by commuting elements,
can, nevertheless, be simple.

Recall that a Clifford algebra is an algebra with $n$ anticommuting generators satisfying
the relations (\ref{Squar}) and the identity of associativity.
We will now give a very similar definition of the algebras
$\bbO_n$ and $\bbM_n$ (as well as $\bbO_{p,q}$ and $\bbM_{p,q}$).
The associativity is replaced by the identity of quasialgebra.

Define a family of algebras $\A$ with $n$ generators
$u_1,\ldots,u_n$.
Consider the monoid $X_n$ of non-associative monomials in $u_i$
and define a function $\phi:X_n\times{}X_n\times{}X_n\to\Z_2$
satisfying the following two properties:
\begin{enumerate}
\item
$
\phi(u_i,\,u_j,\,u_k)=
\left\{
\begin{array}{rl}
1,& \hbox{if}\; i\not=j\not=k,\\
0,& \hbox{otherwise}.
\end{array}
\right.
$
\medskip
\item
$\phi(u\cdot{}u',\,v,\,w)=\phi(u,\,v,\,w)+\phi(u',\,v,\,w),$ and similar in each variable.
\end{enumerate}
Such function exists and is unique.
Moreover, $\phi$ is symmetric.

Define an algebra $\A^\C$ or $\A^\R$ (complex or real),
generated by $u_1,\ldots,u_n$ that satisfies
the relations (\ref{Squar}) together with one of the following two relations.
All the generators either anticommute: $u_i\cdot{}u_j=-u_j\cdot{}u_i$, where $i\not=j$, or
commute: $u_i\cdot{}u_j=u_j\cdot{}u_i$, where $i\not=j$.

We will also assume the identity
$$
u\cdot(v\cdot{}w)=(-1)^{\phi(u,v,w)}\,(u\cdot{}v)\cdot{}w,
$$
for all monomials $u,v,w$.

\begin{prop}
\label{InvSnProp}
If the generators anticommute, then $\A^\C\cong\bbO_n$
and $\A^\R\cong\bbO_{p,q}$.
If the generators commute, then $\A^\C\cong\bbM_n$
and $\A^\R\cong\bbM_{p,q}$.
\end{prop}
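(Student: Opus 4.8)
\textbf{The plan} is to exhibit the concrete twisted group algebra $\bbO_n$ as a quotient of the abstractly presented algebra $\A^\C$, and then to force this quotient map to be an isomorphism by a dimension count. First I would check that $\bbO_n$, equipped with its standard generators $u_i$ from \eqref{General}, satisfies every defining relation of $\A^\C$; this produces a surjective graded homomorphism $\Psi\colon \A^\C\to\bbO_n$ determined by $u_i\mapsto u_i$. Then I would bound $\dim_\C\A^\C\le 2^n$ by a spanning argument, so that, since $\dim_\C\bbO_n=2^n$, the surjection $\Psi$ is automatically bijective.

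For the first step the squaring and (anti)commutation relations are immediate from the generating function: by \eqref{Genalp1} and the table \eqref{GenFuncTab}, for $i\neq j$ one has $\b(x,y)=\a_\bbO(x+y)+\a_\bbO(x)+\a_\bbO(y)=1+1+1=1$ (so the generators of $\bbO_n$ anticommute), whereas the same computation with $\a_\bbM$ gives $0+1+1=0$ (so the generators of $\bbM_n$ commute); the signs of the squares are read off from the signature term exactly as in \eqref{Squar}. The only substantial point is the $\phi$-law. Here I would observe that the function $\phi$ on monomials specified by the conditions (1)--(2) descends, under the degree map $X_n\to(\Z_2)^n$ that records the parities of the letters, to a trilinear function on $(\Z_2)^n$; by construction it takes the value $1$ on a triple of generators exactly when the three indices are distinct. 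The quasialgebra function of $\bbO_n$ computed in Section \ref{DefAlgNash}, namely $\phi(x,y,z)=\sum_{i\neq j\neq k}x_iy_jz_k$, is trilinear by Corollary \ref{TriCol} and has the same values on generators, hence the two functions agree. Therefore the defining identity $u\cdot(v\cdot w)=(-1)^{\phi(u,v,w)}(u\cdot v)\cdot w$ is, for homogeneous $u,v,w$, precisely the quasialgebra identity \eqref{Deffi} that holds in $\bbO_n$, and $\Psi$ is well defined.

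For the dimension bound I would put an arbitrary monomial into a normal form. Using the $\phi$-law repeatedly, any bracketing of a product of generators equals $\pm$ a fixed (say left-normed) product of the same letters; the (anti)commutation relations then sort the indices into increasing order, at the cost of a sign in the $\bbO$-case, and the squaring relations \eqref{Squar} delete any repeated index. Thus $\A^\C$ is spanned by the $2^n$ ordered monomials $u_{i_1}\cdots u_{i_k}$ with $i_1<\cdots<i_k$, whence $\dim_\C\A^\C\le 2^n$. Since $\Psi$ carries these ordered monomials onto the homogeneous basis $\{u_x\}_{x\in(\Z_2)^n}$ of $\bbO_n$, it is surjective, and the dimension inequality upgrades surjectivity to an isomorphism of graded algebras. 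The cases $\bbM_n$ (commuting generators) and the real forms $\bbO_{p,q}$, $\bbM_{p,q}$ (only the signs of the squares in \eqref{Squar} change) are handled by the identical argument with $\a_\bbM$ and the appropriate signature.

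\textbf{The main obstacle} I anticipate is the consistency of this normal form: a priori the different ways of reparenthesizing and reordering a monomial might produce conflicting signs and collapse $\A^\C$. I would avoid proving confluence of the rewriting by hand, and instead use only the cheap spanning inequality $\dim_\C\A^\C\le 2^n$, letting the surjection $\Psi$ onto the $2^n$-dimensional target $\bbO_n$ supply the matching lower bound that rules out any such collapse. The other delicate point, the identification of the abstract monoid function $\phi$ with the concrete quasialgebra $\phi$, is pinned down entirely by trilinearity together with the prescribed values on triples of generators.
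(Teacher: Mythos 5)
Your proof is correct and follows essentially the same route as the paper's: a homomorphism to the concrete twisted group algebra determined on generators, combined with the normal form given by the ordered monomials. If anything you are more careful than the paper, which simply asserts that the ordered monomials form a basis of $\A$, whereas you prove only the spanning inequality $\dim_\C\A^\C\le 2^n$ and let the surjection onto the $2^n$-dimensional model $\bbO_n$ supply the linear independence.
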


\begin{proof}
By definition of $\A=\A^\C$ (resp. $\A^\R$), the elements
$$
u_{i_1\ldots{}i_k}=
u_{i_1}\cdot(u_{i_2}\cdot(\cdots(u_{i_{k-1}}\cdot{}u_{i_k})\!\cdots\!),
$$
where $i_1<i_2<\cdots<i_k$, form a basis of $\A$.
Therefore, $\dim\A=2^n$.
The linear map sending the generators of $\A$
to the generators (\ref{General}) of  $\bbO_n$ or $\bbM_n$
($\bbO_{p,q}$ or $\bbM_{p,q}$, respectively) is a homomorphism,
since the function~$\phi$ corresponding to these algebras is symmetric and trilinear.
It sends the above basis of $\A$ to that of $\bbO_n$ or $\bbM_n$
($\bbO_{p,q}$ or $\bbM_{p,q}$, respectively).
\end{proof}

%%%%%%%%%%%%%%%%%%%%%%%%%%%%%%%%%
%%%%%%%%%%%%%%%%%%%%%%%%%%%%%%%%%
\section{The series $\bbO_n$ and $\bbM_n$: properties}\label{TheMainSBis}
%%%%%%%%%%%%%%%%%%%%%%%%%%%%%%%%%
%%%%%%%%%%%%%%%%%%%%%%%%%%%%%%%%%

In this section, we study properties of the algebras of the series $\bbO$ and $\bbM$.
The main result is Theorem \ref{SimProp} providing
a criterion of simplicity.
We describe the first algebras of the series and give the list of isomorphisms
in lower dimensions.
We also define a non-oriented graph encoding the structure of the algebra.
Finally, we formulate open problems.

%%%%%%%%%%%%%%%%%%%%%%%%%%%%%%%%%
\subsection{Criterion of simplicity}
%%%%%%%%%%%%%%%%%%%%%%%%%%%%%%%%%

The most important property of the defined algebras that we
study is the simplicity.
Let us stress that 
we understand simplicity in the usual sense:
an algebra is called \textit{simple} if it contains no proper ideal.
Note that in the case of commutative associative algebras,
simplicity and division are equivalent notions,
in our situation, the notion of simplicity is much weaker.

\begin{rem}
This notion should not be confounded with the notion of graded-simple algebra.
The latter notion is much weaker and means that the algebra contains no graded ideal;
however, this notion is rather a property of the grading and not
of the algebra itself.
\end{rem}

The following statement is the second main result of this paper.
We will treat the complex and the real cases independently.

\begin{thm}
\label{SimProp}
(i)
The algebra $\bbO_n$ (resp. $\bbM_n$)
is simple if and only if $n\not=4m$ (resp. $n\not=4m+2$).
One also has
$$
\bbO_{4m}\cong\bbO_{4m-1}\oplus\bbO_{4m-1},
\qquad
\bbM_{4m+2}\cong\bbM_{4m+1}\oplus\bbM_{4m+1}.
$$

\noindent
(ii) The algebra $\bbO_{p,q}$ is simple if and only if one of the following conditions is satisfied
\begin{enumerate}
\item
$p+q\not=4m$, 
\item $p+q=4m$ and $p,q$ are odd;
\end{enumerate}
(iii) The algebra $\bbM_{p,q}$ is simple if and only if one of the following conditions is satisfied
\begin{enumerate}
\item
$p+q\not=4m+2$, 
\item $p+q=4m+2$ and $p,q$ are odd.
\end{enumerate}
\end{thm}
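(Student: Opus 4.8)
The plan is to reduce everything to a center computation. For a twisted group algebra $\A=(\bbK[(\Z_2)^n],f)$, an element $z=\sum_x\lambda_x u_x$ spans a two-sided ideal precisely when the homogeneous pieces behave well, and the key observation is that the relevant obstruction is the \emph{center} and, more precisely, the set of elements $u_x$ that commute and associate with everything. Since the grading group is $(\Z_2)^n$, Schur-type arguments suggest that a proper ideal can exist only when there is a nontrivial ``graded center'', i.e.\ a nonzero $x\in(\Z_2)^n$ such that $u_x$ lies in the center of $\A$ (up to the quasialgebra corrections). So the first step I would take is to characterize, in terms of the generating function $\a$, when $\A$ fails to be simple.

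Concretely, I would show that $\A$ is \emph{not} simple if and only if there exists $x_0\neq 0$ such that $u_{x_0}$ commutes and associates with all generators, so that $1\pm u_{x_0}$ (in the complex case, after rescaling to make $u_{x_0}^2=1$) generate proper ideals splitting $\A$ as a direct sum. Using the formulae $\b=\d\a$ and $\phi=\d_2\a$ from Section~\ref{DefGenFn}, the condition that $u_{x_0}$ is central becomes the pair of conditions
\begin{eqnarray*}
\b(x_0,y)&=&\a(x_0+y)+\a(x_0)+\a(y)=0,\\
\phi(x_0,y,z)&=&0,
\end{eqnarray*}
for all $y,z$, i.e.\ $x_0$ lies in the \emph{radical} of the bilinear form $\b$ and in the kernel of the trilinear form $\phi$. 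By Corollary~\ref{TriCol}, $\phi$ is trilinear and alternate, so its radical is a linear subspace and the whole analysis reduces to linear algebra over $\Z_2$.

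The heart of the argument is then a direct computation using the $4$-periodic table~\eqref{GenFuncTab}. Taking $x_0=(1,1,\ldots,1)$ (the all-ones vector), its weight is $n$, and one computes $\b(x_0,y)$ and $\phi(x_0,y,z)$ by evaluating $\a$ on vectors whose weights differ from $|y|,|z|$ by controlled amounts. Because $\a_\bbO$ and $\a_\bbM$ depend only on $|x|\bmod 4$, these evaluations reduce to checking congruences of $n$ modulo $4$: one finds that $x_0$ is central for $\bbO_n$ exactly when $n\equiv 0\pmod 4$ and for $\bbM_n$ exactly when $n\equiv 2\pmod 4$, which produces the splittings
$$
\bbO_{4m}\cong\bbO_{4m-1}\oplus\bbO_{4m-1},\qquad
\bbM_{4m+2}\cong\bbM_{4m+1}\oplus\bbM_{4m+1},
$$
the idempotents being $\tfrac12(1\pm u_{x_0})$. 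Conversely, when $n$ is not of the exceptional form I would argue that $x_0=(1,\ldots,1)$ is the \emph{only} candidate (by $\gS_n$-invariance the radical of $\phi$ is a $\gS_n$-submodule of $(\Z_2)^n$, hence either $0$ or spanned by $x_0$), and that it fails to be central, forcing simplicity.

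For part (ii) and (iii), the real case, the generating function alone does not determine the algebra: the signature enters through $u_{x_0}^2=\pm1$. Here the extra term $\sum_{i\le p}x_iy_i$ in $f$ changes the square of $u_{x_0}=u_{(1,\ldots,1)}$, and the idempotents $\tfrac12(1\pm u_{x_0})$ exist over $\R$ only when $u_{x_0}^2=+1$. The sign of $u_{x_0}^2$ is governed by $f(x_0,x_0)$, which receives a contribution $p\bmod 2$ from the signature term together with a fixed contribution from the cubic part depending on $n\bmod 4$. \textbf{The main obstacle} I anticipate is exactly this bookkeeping: one must compute $u_{x_0}^2$ honestly from $f_\bbO$ (resp.\ $f_\bbM$) plus the signature, and show that in the exceptional case $p+q=4m$ (resp.\ $4m+2$) one has $u_{x_0}^2=+1$ iff $p,q$ have the same parity as forced by $p+q$ even — precisely, the splitting survives over $\R$ iff $u_{x_0}^2=1$, which by the parity computation happens exactly when $p,q$ are \emph{not} both odd, so that simplicity holds precisely in the complementary case stated. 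Carrying out this parity computation carefully, and checking that when $u_{x_0}^2=-1$ the subalgebra $\R[1,u_{x_0}]\cong\C$ is a field (so no splitting and the algebra stays simple), is the step that requires genuine care rather than routine calculation.
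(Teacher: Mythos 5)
Your overall strategy coincides with the paper's: reduce simplicity to the existence of a nonzero homogeneous central element, show that the only candidate is $u_z$ with $z=(1,\ldots,1)$ and that it is central exactly when $n\equiv 0$ (resp.\ $2$) mod $4$ via the $4$-periodic table \eqref{GenFuncTab}, split off the idempotents $\tfrac12(1\pm u_z)$ when $u_z^2=1$, and in the real case compute $u_z^2=(-1)^p$ from the signature. Two of your shortcuts need to be filled in, though. First, the implication ``no nonzero homogeneous central element $\Rightarrow$ simple'' is the load-bearing lemma (Lemma \ref{ASimple} in the paper): one takes an element of a putative ideal with the fewest homogeneous components, normalizes it to $1+\lambda_2 u_{x_2}+\cdots$, and commutes with an element anticommuting with $u_{x_2}$ to shorten it; ``Schur-type arguments suggest'' is not a substitute for this argument. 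Relatedly, your claim that the radical of $\b$ is ``either $0$ or spanned by $x_0$'' because it is a $\gS_n$-submodule omits the other two submodules of $(\Z_2)^n$, namely the hyperplane $\{x:\sum_i x_i=0\}$ and the whole space; both are easily excluded (e.g.\ $\b(e_1+e_2,e_3)=1$ for both series), but the exclusion has to be made, and one should also check that this radical is in fact a linear subspace, which uses \eqref{PhiBet}.

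Second, and more seriously, in the real case $p+q=4m$ with $p,q$ odd you infer simplicity from the non-existence of the idempotents $\tfrac12(1\pm u_z)$; that inference is invalid as stated, since a proper ideal need not be of the form $\A\cdot(1\pm u_z)$. The paper instead proves (Lemma \ref{Moulinette}) that $\bbO_{p,q}\cong\bbO_{p,q-1}\otimes_\R\C\cong\bbO_{p+q-1}$ as a real algebra and invokes the fact that a simple unital complex algebra remains simple when viewed over $\R$ --- which also yields Corollary \ref{IsoPr} for free. Your observation that $\R[1,u_z]\cong\C$ can be turned into a direct proof, but only by re-running the minimal-element reduction: any proper ideal would have to contain an element of the form $1+\lambda u_z$, and since $u_z$ lies in the associative nucleus, multiplying by $1-\lambda u_z$ produces the nonzero scalar $1+\lambda^2$. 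As written, that step is missing, so the case that you yourself flag as requiring ``genuine care'' is the one place where the proposal does not yet constitute a proof.
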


This theorem will be proved in Section \ref{ProoSimProp}.

The arguments developed in the proof of Theorem \ref{SimProp}
allow us to link the complex and the real algebras
in the particular cases below.
Let us use the notation $\bbO_n^\R$ and $\bbM_n^\R$ 
when we consider the algebras $\bbO_n$ and $\bbM_n$ as
$2^{n+1}$-dimensional real algebras.
We have the following statement.

\begin{cor}
\label{IsoPr}
(i)
If $p+q=4m$ and $p,q$ are odd, then $\bbO_{p,q}\cong\bbO_{p+q-1}^\R$.

(ii)
If $p+q=4m+2$ and $p,q$ are odd, then $\bbM_{p,q}\cong\bbM_{p+q-1}^\R$.
\end{cor}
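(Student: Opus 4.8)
The plan is to realize each simple real algebra in the two families as the realification of a complex algebra, the complex structure being provided by the top-degree element $\omega:=u_{\mathbf{1}}$, where $\mathbf{1}=(1,\ldots,1)$. I treat case (i) first; case (ii) is identical with $\bbM$ in place of $\bbO$. The first step is to show that $\omega$ is central and that $\omega^2=-1$. I compute $\b(\mathbf{1},y)$ and $\phi(\mathbf{1},y,z)$ from \eqref{Genalp1}--\eqref{Genalp2} using $\a_\bbO$ and the $4$-periodic table \eqref{GenFuncTab}. Since $|\mathbf{1}|=p+q=4m\equiv0$ and $\a_\bbO$ depends only on the weight, a short parity check gives $\b(\mathbf{1},y)=0$ for all $y$, while $\phi(\mathbf{1},y,z)=(n-2)\sum_{j\neq k}y_jz_k\equiv0$; by symmetry of $\phi$ the element $\omega$ thus lies in the centre and in the nucleus. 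Next I evaluate $\omega^2=(-1)^{f_\bbO(\mathbf{1},\mathbf{1})}$: the cubic and quadratic parts contribute $3\binom{4m}{3}+\binom{4m+1}{2}\equiv0\pmod2$, while the signature term contributes $p$. The hypothesis that $p$ (equivalently $q$) is odd is exactly what forces $f_\bbO(\mathbf{1},\mathbf{1})\equiv1$, hence $\omega^2=-1$. (These facts are already extracted in the proof of Theorem \ref{SimProp}: the same $\omega$ that splits $\bbO_{4m}$ over $\C$ squares to $-1$ in the present signature, yielding a complex structure rather than a pair of central idempotents.)

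For the complexification step, let $R\subset\bbO_{p,q}$ be the real span of the $u_x$ with $x_n=0$. Restricting $f_\bbO$ to such indices involves only $1,\dots,n-1$, so $R$ is a real subalgebra, namely $\bbO_{p',q'}$ with $p'+q'=4m-1$ (the signature losing its last generator). Because $\omega$ is central with $\omega^2=-1$ and maps $\{u_x:x_n=0\}$ bijectively, up to sign, onto $\{u_x:x_n=1\}$, one has $\bbO_{p,q}=R\oplus\omega R$ as real vector spaces, and nuclearity of $\omega$ gives $(a+\omega b)(c+\omega d)=(ac-bd)+\omega(bc+ad)$ for $a,b,c,d\in R$. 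Thus $\R[\omega]\cong\C$ sits centrally and $\bbO_{p,q}\cong R\otimes_\R\C$ is the complexification of $R$.

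To conclude, the complexification of $R=\bbO_{p',q'}$ is the complex twisted group algebra over $(\Z_2)^{4m-1}$ with twisting function $f_\bbO$ together with the inherited signature. By Proposition \ref{SignProp} the signature is irrelevant over $\C$, so $R\otimes_\R\C\cong\bbO_{4m-1}$. Hence $\bbO_{p,q}$, viewed as a real algebra, is the realification $\bbO_{4m-1}^\R=\bbO_{p+q-1}^\R$, proving (i). Part (ii) follows verbatim: for $n=4m+2$ with $p,q$ odd, the $\a_\bbM$ row of \eqref{GenFuncTab} again gives $\b(\mathbf{1},\cdot)=0$ and $\phi(\mathbf{1},\cdot,\cdot)=0$, while $3\binom{4m+2}{3}\equiv0$ forces $\omega^2=(-1)^p=-1$; then $R\cong\bbM_{p',q'}$ with $p'+q'=4m+1$ and $R\otimes_\R\C\cong\bbM_{4m+1}$.

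The main obstacle is the parity bookkeeping in the first step: the entire argument rests on obtaining $\omega^2=-1$ rather than $+1$, and this holds precisely when $p,q$ are odd. This single sign is what distinguishes the simple real forms treated here from the non-simple complex algebras $\bbO_{4m}$ and $\bbM_{4m+2}$, where the central $\omega$ squares to $+1$ and splits the algebra instead.
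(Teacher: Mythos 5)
Your proposal is correct and follows essentially the same route as the paper: the paper's proof also identifies the central element $u_z$ with $u_z^2=(-1)^p=-1$, uses it as a complex structure to obtain $\bbO_{p,q}\cong\bbO_{p,q-1}\otimes\C$ (Lemma \ref{Moulinette}), and then identifies the complexification with $\bbO_{p+q-1}$ viewed as a real algebra. Your direct-sum formulation $R\oplus\omega R$ and the explicit appeal to Proposition \ref{SignProp} for the last step are just a slightly more spelled-out version of the same argument.
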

\noindent
This statement is proved in Section \ref{Thmii}.

\begin{rem}
To explain the meaning of the above statement,
we notice that, in the case where the complex algebras
split into a direct sum, the real algebras can still be simple.
In this case, all the simple real algebras are isomorphic to
the complex algebra with $n-1$ generators.
In particular, all the algebras $\bbO_{p,q}$ and $\bbO_{p',q'}$
with $p+q=p'+q'=4m$ and $p$ and $p'$ odd are isomorphic
to each other (and similarly for the $\bbM$-series).
A very similar property holds for the Clifford algebras.
\end{rem}

Theorem \ref{SimProp} immediately implies the following.

\begin{cor}
\label{NoisomProp}
The algebras $\bbO_{n}$ and $\bbM_n$ with even $n$ are not isomorphic.
\end{cor}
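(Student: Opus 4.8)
The plan is to exploit the fact that simplicity is an isomorphism invariant: if $\A \cong \A'$ as algebras, then $\A$ is simple if and only if $\A'$ is. Since Theorem \ref{SimProp} provides a complete description of when the algebras $\bbO_n$ and $\bbM_n$ are simple, it suffices to show that for every even $n$, exactly one of the two algebras $\bbO_n,\bbM_n$ is simple while the other is not.

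First I would split the even integers into the two residue classes modulo $4$. Every even $n$ satisfies either $n=4m$ or $n=4m+2$ for some integer $m$, and these two classes are disjoint and together exhaust all even values. By Theorem \ref{SimProp}(i), the algebra $\bbO_n$ fails to be simple precisely when $n=4m$, whereas $\bbM_n$ fails to be simple precisely when $n=4m+2$.

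In the case $n=4m$, the algebra $\bbO_n$ decomposes as $\bbO_{4m}\cong\bbO_{4m-1}\oplus\bbO_{4m-1}$ and is therefore not simple, while $\bbM_n$ (with $n$ not of the form $4m'+2$) is simple. In the case $n=4m+2$, the roles are reversed: $\bbM_n\cong\bbM_{4m+1}\oplus\bbM_{4m+1}$ is not simple, whereas $\bbO_n$ (with $n$ not divisible by $4$) is simple. In either case, one of the two algebras is simple and the other is not, so no algebra isomorphism between them can exist.

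I do not expect any genuine obstacle here: the entire content is already packaged in Theorem \ref{SimProp}, and the corollary follows from the elementary observation that simplicity is preserved under isomorphism, combined with the two-case parity check above. The only point to verify is that the exceptional residue classes $0$ and $2$ modulo $4$ are disjoint and cover all even $n$, which is immediate.
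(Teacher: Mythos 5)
Your argument is correct and is precisely what the paper intends: the corollary is stated as an immediate consequence of Theorem \ref{SimProp}, and your two-case parity check (even $n$ is either $4m$ or $4m+2$, so exactly one of $\bbO_n$, $\bbM_n$ is simple) together with the invariance of simplicity under isomorphism is the whole content. No gaps.
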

\noindent
This implies, in particular, that the real algebras $\bbO_{p,q}$
and $\bbM_{p',q'}$ with $p+q=p'+q'=2m$ are not isomorphic.

%%%%%%%%%%%%%%%%%%%%%%%%%%%%%%%%%
\subsection{The first algebras of the series}
%%%%%%%%%%%%%%%%%%%%%%%%%%%%%%%%%

Let us consider the first examples of the introduced algebras.
It is natural to ask if some of the introduced algebras can be isomorphic to the other ones.

\begin{prop}
\label{Sporadic}
(i)
For $n=3$, one has:
$$
\bbO_{3,0}\cong\bbO_{2,1}\cong\bbO_{1,2}\not\cong\bbO_{0,3}.
$$
The first three algebras are isomorphic to the algebra of split-octonions,
while $\bbO_{0,3}\cong\bbO$.

(ii)
For $n=4$, one has:
$$
\bbO_{4,0}\cong\bbO_{2,2}\cong\bbO_{3,0}\oplus\bbO_{3,0}\,,
\qquad
\bbO_{0,4}\cong\bbO_{0,3}\oplus\bbO_{0,3}.
$$
In particular, $\bbO_{4,0}$ and $\bbO_{2,2}$ are not isomorphic to $\bbO_{0,4}$.
\end{prop}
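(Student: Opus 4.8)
The plan is to reduce everything to the already-established machinery: the generating functions $\a_\bbO$ of \eqref{NashAlp}, the simplicity criterion of Theorem \ref{SimProp}, the isomorphism Corollary \ref{IsoPr}, and the two isomorphism lemmas \ref{LemA}, \ref{LemB} together with Proposition \ref{SignProp}. The point of the statement is entirely about \emph{real} forms, so the generating function alone will not settle it (over $\R$ it only determines the algebra up to signature), and the signature term must be tracked carefully.

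\textbf{Part (i).} First I would record that all the algebras $\bbO_{p,q}$ with $p+q=3$ share the same function $\phi=\d f$, hence the same $\a_\bbO$, and differ only by the signature $\sum_{i\le p}x_iy_i$. For the three split cases I would exhibit an explicit change of generators realizing the isomorphisms $\bbO_{3,0}\cong\bbO_{2,1}\cong\bbO_{1,2}$; the cleanest route is to note that each is an automorphism $T$ of $(\Z_2)^3$ as in Lemma \ref{LemB} (possibly combined with a coboundary via Lemma \ref{LemA}), permuting/mixing the generators so as to shift the signature term while preserving $\phi$. Concretely one checks that replacing some $u_i$ by a product $u_j\cdot u_k$ changes the sign of the square, and that for $p+q=3$ with $p\ge1$ one can always move between these signatures while keeping $\phi$ fixed. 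That the Euclidean (split) case is genuinely distinct from $\bbO_{0,3}\cong\bbO$ follows because the latter is a division algebra (its norm is anisotropic) while the former contains zero divisors; alternatively, $\bbO_{0,3}$ is positive definite whereas the split-octonion norm has signature $(4,4)$, so the two cannot be graded-isomorphic. The identification $\bbO_{0,3}\cong\bbO$ is immediate from the Introduction, where the twisting function of $\bbO$ is exactly $f_\bbO$ with all $u_i^2=-1$.

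\textbf{Part (ii).} Here the key input is Theorem \ref{SimProp}(ii): for $p+q=4$ we have $4m$ with $m=1$, so $\bbO_{p,q}$ is simple iff $p,q$ are both odd. Thus $\bbO_{4,0}$, $\bbO_{2,2}$, $\bbO_{0,4}$ are all non-simple and split as a direct sum of two copies of a $2^{3+1}$-dimensional real algebra. To identify the summands I would apply the structural description underlying the proof of Theorem \ref{SimProp}: when $\bbO_n$ splits, $\bbO_{4}\cong\bbO_{3}\oplus\bbO_{3}$, and passing to the appropriate real form the two summands are the corresponding $\bbO_{p',q'}$ with $p'+q'=3$. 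For $\bbO_{4,0}$ and $\bbO_{2,2}$ the induced signature on the summands is the split one, giving $\bbO_{3,0}\oplus\bbO_{3,0}$, whereas for $\bbO_{0,4}$ it is the definite one, giving $\bbO_{0,3}\oplus\bbO_{0,3}$. The isomorphism $\bbO_{4,0}\cong\bbO_{2,2}$ then follows either from the already-proven $\bbO_{3,0}\cong\bbO_{2,1}$ on the summands, or directly from Corollary \ref{IsoPr} is not applicable (that needs $p,q$ odd), so one argues on the summands. Finally $\bbO_{4,0}\not\cong\bbO_{0,4}$ because their summands $\bbO_{3,0}$ and $\bbO_{0,3}$ are non-isomorphic by part (i).

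\textbf{Main obstacle.} The routine linear-algebra verifications of the explicit generator changes are harmless; the genuinely delicate point is bookkeeping the \emph{signature} as it passes to the direct summands when the algebra splits. One must verify that the idempotents decomposing $\bbO_{p,q}$ are built from a central element whose square and grading are controlled by $p,q\bmod 4$, and then determine precisely which real form $\bbO_{p',q'}$ (split versus definite) each factor carries. Getting this signature tracking exactly right — rather than merely identifying the complexified summand — is where the real content of part (ii) lies, and it is the step I would treat most carefully, leaning on the detailed splitting analysis in the proof of Theorem \ref{SimProp}.
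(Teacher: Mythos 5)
Your proposal is correct and follows essentially the same route as the paper: the isomorphisms among the split forms are obtained from automorphisms of $(\Z_2)^n$ (Lemma \ref{LemB}) combined with coboundaries (Lemma \ref{LemA}), and the direct-sum decompositions come from the central element $u_z$ with $u_z^2=1$ exactly as in the proof of Theorem \ref{SimProp}. The only cosmetic difference is that the paper establishes $\bbO_{4,0}\cong\bbO_{2,2}$ directly by an explicit triangular automorphism $x_i'=x_1+\cdots+x_i$ of $(\Z_2)^4$, whereas you deduce it from the isomorphism $\bbO_{3,0}\cong\bbO_{2,1}$ of the summands; both are valid.
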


\begin{proof}
The above isomorphisms are combination of the general
isomorphisms of type (a) and (b), see Section \ref{IsomSec}.
The involved automorphisms of $(\Z_2)^3$ and $(\Z_2)^4$ are
$$
\begin{array}{lcl}
x_1'=x_1,&&x_1'=x_1,\\[4pt]
x_2'=x_1+x_2,&&x_2'=x_1+x_2,\\[4pt]
x_3'=x_1+x_2+x_3, \;\;\;\;\;\;\;\;\;&&x_3'=x_1+x_2+x_3,\\[4pt]
&&x_4'=x_1+x_2+x_3+x_4.
\end{array}
$$
Then, the twisting functions of the above isomorphic algebras
coincide modulo coboundary.
\end{proof}

Let us notice that the very first algebras of the $\bbO$-series
are all obtained as a combination of the algebras
of octonions and split-octonions.
In this sense, we do not obtain new algebras among them.

In the $\bbM$-case, we have the following isomorphism.

\begin{prop}
\label{M3Prop}
One has
$$
\bbM_{1,2}\cong\bbM_{0,3},
$$
\end{prop}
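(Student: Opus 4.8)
The plan is to follow the scheme of Proposition~\ref{Sporadic}: try to realise the isomorphism as a composite of a map of type Lemma~\ref{LemB} (a linear automorphism $T$ of $(\Z_2)^3$) and a map of type Lemma~\ref{LemA} (addition of a coboundary). First I would note that the two algebras share the same complex quasialgebra: their twisting functions are $f_{0,3}=f_\bbM$ and $f_{1,2}=f_\bbM+x_1y_1$, so they differ only by the symmetric $2$-cocycle $x_1y_1$, which flips the square of a single generator. By Proposition~\ref{SignProp} this difference disappears over $\C$, so the entire content is to absorb the term $x_1y_1$ over $\R$.

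Thus I would search for $T\in\GL_3(\Z_2)$ and $b\colon(\Z_2)^3\to\Z_2$ with $f_{0,3}(Tx,Ty)=f_{1,2}(x,y)+\d b(x,y)$. This is reduced to a finite computation by two observations. Since $\phi=\d f_\bbM=\sum_{i\neq j\neq k}x_iy_jz_k$ is, for $n=3$, the permanent --- hence over $\Z_2$ the determinant --- of the matrix with rows $x,y,z$, and $\det T=1$ over $\Z_2$, the function $\phi$ is $\GL_3(\Z_2)$-invariant; therefore $f_{0,3}\circ T$ and $f_{1,2}$ have the same $\phi$ and their difference is a genuine $2$-cocycle. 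A grading-preserving isomorphism moreover requires $\b$ to be unchanged, and by~\eqref{bef} the difference is then symmetric, so by the structure of $H^2((\Z_2)^3;\Z_2)$ its class is read off the diagonal: at $e_i$ it equals $f_\bbM(Te_i,Te_i)+\delta_{i1}=\prod_k (Te_i)_k+\delta_{i1}$, using $f_\bbM(v,v)=v_1v_2v_3$ and $f_\bbM(e_i,e_i)=0$.

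The difficulty I expect is the rigidity of $\b_\bbM$. In the octonionic case of Proposition~\ref{Sporadic} the form $\b_\bbO$ is invariant under all of $\GL_3(\Z_2)$, so the signature can be transported freely; here $\b_\bbM$ is preserved only by those $T$ that fix the top vector $(1,1,1)$, and for such $T$ no column $Te_i$ can equal $(1,1,1)$, so every product $\prod_k(Te_i)_k$ vanishes. The diagonal above then reduces to $\delta_{i1}$, i.e.\ the residual cocycle is $\equiv x_1y_1\not\equiv0$, and the coboundary equation has no solution: no grading-preserving isomorphism exists. I therefore expect the genuine proof to be \emph{non-graded} --- one must produce an inhomogeneous commuting triple $v_1,v_2,v_3\in\bbM_{0,3}$ that generates, carries the square pattern of $\bbM_{1,2}$, and still satisfies $v_i\cdot(v_j\cdot v_k)=-(v_i\cdot v_j)\cdot v_k$, and then conclude by the presentation of Proposition~\ref{InvSnProp}. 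The hard point is that inhomogeneous generators tend to introduce spurious cross terms violating this antiassociativity, so the choice must be engineered to be compatible with the isomorphism-invariant trace form $x\mapsto 2^{-n}\,\mathrm{tr}(L_{x\cdot x})$.
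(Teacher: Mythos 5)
Your cohomological setup is sound, and your obstruction computation is accurate \emph{under the convention you adopt} ($f_{0,3}=f_\bbM$, $f_{1,2}=f_\bbM+x_1y_1$): any $T$ preserving $\b_\bbM$ must fix $(1,1,1)$, the residual symmetric cocycle then has diagonal $\delta_{i1}$, and no graded isomorphism exists. But this should have triggered a re-examination rather than a flight to non-graded isomorphisms. The quantity you invoke in your last sentence, $a\mapsto 2^{-n}\,\mathrm{tr}(L_{a\cdot a})=\sum_x(-1)^{f(x,x)}a_x^2$, is an invariant of the algebra (not merely of the grading), and its signature is $(7,1)$ for $f_\bbM$ but $(5,3)$ for $f_\bbM+x_1y_1$. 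So under your convention the two algebras are not isomorphic by \emph{any} map, graded or not; your proposed fallback --- engineering an inhomogeneous commuting, antiassociating triple of generators --- cannot succeed, and in any case you did not carry it out, so no proof of the statement is obtained.

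The resolution is a signature-convention issue, and the paper's actual proof is exactly the one-line graded argument you tried first: apply Lemma \ref{LemB} with $x_1'=x_1+x_2+x_3$, $x_2'=x_2$, $x_3'=x_3$. This works once $\bbM_{0,3}$ is taken, as prescribed by \eqref{Squar}, to be the real form with \emph{all three} generators squaring to $-1$, i.e.\ $f_{0,3}=f_\bbM+x_1y_1+x_2y_2+x_3y_3$ with diagonal $f_{0,3}(x,x)=\a_\bbM(x)$ (equal to $1$ exactly when $|x|=1$), while $\bbM_{1,2}$ has exactly one generator squaring to $-1$. Then, since $Te_1=e_1$, $Te_2=(1,1,0)$, $Te_3=(1,0,1)$ and $T$ fixes $(1,1,1)$, one gets $f_{0,3}(Tx,Tx)=x_1x_2x_3+x_1=f_{1,2}(x,x)$; as $\b$ and $\phi$ are preserved, Proposition \ref{VeryFirstProp} together with the matching diagonals forces the difference to be a coboundary --- the same mechanism as in Proposition \ref{Sporadic}. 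In your diagonal formula the term $\prod_k(Te_i)_k$ should then be $\a_\bbM(Te_i)=\delta_{i1}$, which cancels your residual $x_1y_1$. In short: right method and correct computations, but you followed the introduction's formula for the signature term rather than \eqref{Squar}, concluded the graded route was closed, and replaced it by a plan that is both unexecuted and unworkable.
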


\begin{proof}
This isomorphism can be obtained by the following automorphism of $(\Z_2)^3$.
$$
x_1'=x_1+x_2+x_3,\quad
x_2'=x_2,\quad
x_3'=x_3
$$
This algebra is not isomorphic to $\bbO_{0,3}$ or $\bbO_{3,0}$.
\end{proof}

The next algebras,
$\bbO_5$ and $\bbM_5$, as well as all of the real algebras
$\bbO_{p,q}$ and $\bbM_{p,q}$
with $p+q=5$, are not combinations of the classical algebras.
Since these algebras are simple, they are not direct sums
of lower-dimensional algebras.
The next statement shows that these algebras are not
tensor products of classical algebras.
Note that the only ``candidate'' for an isomorphism of this kind
is the tensor product of the octonion algebra and the algebra of complex
$(2\times2)$-matrices.

\begin{prop}
\label{IndProp}
Neither of the algebras $\bbO_5$ and $\bbM_5$ is isomorphic to the tensor product
of the octonion algebra~$\bbO$ and the algebra $\C[2]$ of complex
$(2\times2)$-matrices:
$$
\bbO_5\not\cong\bbO\otimes\C[2],
\qquad
\bbM_5\not\cong\bbO\otimes\C[2].
$$
\end{prop}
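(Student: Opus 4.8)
The plan is to distinguish the three algebras by computing their \emph{nuclei}, an invariant preserved by any algebra isomorphism (not merely by graded ones, so that the grading-dependent function $\phi$ cannot be used directly). Recall that the nucleus $\mathrm{Nuc}(\A)$ of an algebra $\A$ is the subspace of all $a$ such that the associator $(a,x,y)=(x,a,y)=(x,y,a)=0$ for every $x,y\in\A$, where $(a,b,c):=(a\cdot b)\cdot c-a\cdot(b\cdot c)$. Since $\mathrm{Nuc}$ is defined purely in terms of the multiplication, $\dim\mathrm{Nuc}$ is an isomorphism invariant, and I will show that it equals $1$ for $\bbO_5$ and $\bbM_5$ but $4$ for $\bbO\otimes\C[2]$.

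First I would compute the nucleus of $\bbO\otimes\C[2]$. Because $\C[2]$ is associative, a direct expansion yields the factorization $(a_1\otimes b_1,\,a_2\otimes b_2,\,a_3\otimes b_3)=(a_1,a_2,a_3)_\bbO\otimes b_1b_2b_3$, so the only source of nonassociativity is the octonionic factor. Writing an arbitrary element as $t=\sum_\gamma o_\gamma\otimes m_\gamma$ with the $m_\gamma\in\C[2]$ linearly independent, the vanishing of $(t,\,x\otimes 1,\,y\otimes 1)=\sum_\gamma (o_\gamma,x,y)_\bbO\otimes m_\gamma$ for all $x,y\in\bbO$ forces $(o_\gamma,x,y)_\bbO=0$ for each $\gamma$, i.e. $o_\gamma\in\mathrm{Nuc}(\bbO)=\C$. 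Conversely every element of $\C\otimes\C[2]=1\otimes\C[2]$ clearly associates with everything. Hence $\mathrm{Nuc}(\bbO\otimes\C[2])=\C\otimes\C[2]\cong\C[2]$ is $4$-dimensional.

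Next I would compute the nuclei of $\bbO_5$ and $\bbM_5$ using their graded structure. Since $u_x\cdot(u_y\cdot u_z)=(-1)^{\phi(x,y,z)}(u_x\cdot u_y)\cdot u_z$, the associator of homogeneous elements vanishes precisely when $\phi(x,y,z)=0$; as the nucleus of a graded algebra is a graded subspace, it suffices to decide which homogeneous $u_x$ lie in it, and by symmetry of $\phi$ (Section \ref{DefAlgNash}) the three nucleus conditions reduce to $\phi(x,y,z)=0$ for all $y,z$. Using $\phi(x,y,z)=\sum_{i\neq j\neq k}x_iy_jz_k$ and testing against $y=e_j,\,z=e_k$ (the standard generators, $j\neq k$) gives $\phi(x,e_j,e_k)=|x|-x_j-x_k \pmod 2$; a short case analysis on the weight $|x|\in\{1,\dots,5\}$ shows that for every $x\neq 0$ one can choose $j\neq k$ making this equal to $1$. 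Therefore the only homogeneous element in the nucleus is $u_0=1$, so $\mathrm{Nuc}(\bbO_5)=\C$ is $1$-dimensional; the identical computation applies to $\bbM_5$, which shares the same function $\phi$ (the two series differ only in $\b$, which does not enter the associator).

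Comparing dimensions ($1$ versus $4$) then yields $\bbO_5\not\cong\bbO\otimes\C[2]$ and $\bbM_5\not\cong\bbO\otimes\C[2]$. The main technical point to handle with care is the reverse inclusion in the tensor-product computation, namely that no ``mixed'' element outside $1\otimes\C[2]$ can associate with everything; the linear-independence regrouping of the $\C[2]$-components is what makes this step rigorous. By contrast, the weight case-analysis for $\bbO_5$ and $\bbM_5$ is entirely routine once the formula $\phi(x,e_j,e_k)=|x|-x_j-x_k$ is in hand. It is worth noting that the \emph{center} of all three algebras is only $\C$, so it is really the nucleus, and not the center, that separates them.
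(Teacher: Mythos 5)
Your proof is correct, but it takes a genuinely different route from the paper. The paper distinguishes the algebras by centralizer dimensions: it exhibits a homogeneous element of $\bbO_5$ (resp.\ $\bbM_5$) whose centralizer has dimension $24$, and then shows through a case analysis (reducing, via a technical lemma, to a subalgebra $\C[4]\subset\bbO\otimes\C[2]$ and a bound on matrix centralizers) that no non-central element of $\bbO\otimes\C[2]$ has a centralizer that large. You instead use the associative nucleus, which is likewise an invariant of the ungraded algebra structure. Your two computations are both sound: the factorization of the associator $(a_1\otimes b_1,a_2\otimes b_2,a_3\otimes b_3)=(a_1,a_2,a_3)_{\bbO}\otimes b_1b_2b_3$, combined with the linear independence of the $m_\gamma$ and the classical fact $\mathrm{Nuc}(\bbO)=\C$, gives $\mathrm{Nuc}(\bbO\otimes\C[2])=1\otimes\C[2]$ cleanly; and on the graded side the nucleus is indeed graded (the components $(u_x,u_y,u_z)$ for distinct $x$ lie in distinct graded pieces $u_{x+y+z}$, so the homogeneous reduction is legitimate), while the identity $\phi(x,e_j,e_k)=|x|-x_j-x_k \bmod 2$ and the short weight analysis on $(\Z_2)^5$ rule out every $x\not=0$. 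Since $\phi$ is common to $f_{\bbO}$ and $f_{\bbM}$, both nuclei are $\C\,1$. What your approach buys is brevity and robustness: the whole tensor-product side collapses to one line, there is no case analysis, and the argument visibly uses only the multiplication (not the grading). What the paper's approach buys is a sharper structural picture of $\bbO\otimes\C[2]$ (the explicit description of elements with large centralizers), at the cost of more delicate computations. One small presentational caveat: you assert in passing that the nucleus of a graded algebra is always graded; it is safer to justify this here exactly as above (distinct graded components of the associator), which your setup already contains implicitly.
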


\begin{proof}
Let us consider the element $u=u_{(1,1,1,1,0)}$ in $\bbO_5$
and the element $u=u_{(1,1,0,0,0)}$ in $\bbM_5$.
Each of these elements has a very big centralizer $Z_u$ of $\dim{}Z_u=24$.
Indeed, the above element of $\bbO_5$
commutes with itself and with any homogeneous element $u_x$
of the weight $|x|=0,1,3,5$ as well as 6 elements such that $|x|=2$.
The centralizer $Z_u$ is the vector space spanned by these 24 homogeneous elements,
and similarly in the $\bbM_5$ case.
We will show that the algebra $\bbO\otimes\C[2]$ does not contain
such an element.

Assume, \textit{ad absurdum}, that an element $u\in\bbO\otimes\C[2]$
has a centralizer of dimension~$\geq24$.
Consider the subspace $\bbO\otimes1\oplus1\otimes\C[2]$ of the algebra $\bbO\otimes\C[2]$.
It is 12-dimensional, so that its intersection with $Z_u$ is of dimension at least 4.
It follows that $Z_u$ contains at least two independent elements of the form
$$
z_1=e_1\otimes1+1\otimes{}m_1,
\qquad
z_2=e_2\otimes1+1\otimes{}m_2,
$$
where $e_1$ and $e_2$ are pure imaginary octonions and
$m_1$ and $m_2$ are traceless matrices.

Without loss of generality, we can assume that one of
the following holds
\begin{enumerate}
\item
the generic case:
$e_1,e_2$ and
$m_1,m_2$ are linearly independent and
pairwise anticommute,
\item
$e_2=0$ and $m_1,m_2$ are linearly independent and anticommute,
\item
$m_2=0$  and $e_1,e_2$ are linearly independent and anticommute.
\end{enumerate}

We will give the details of the proof in the case (1).
Let us write
$$
u=u_0\otimes 1 +u_1\otimes m_1 +u_2\otimes m_2 +u_{12}\otimes m_1m_2 
$$
where $u_0,u_1,u_2,u_{12}\in \bbO$.

\begin{lem}
\label{TechnLem}
The element $u$ is a linear combination of the following two elements:
$$
1\otimes1,
\qquad
e_1\otimes{}m_1+e_2\otimes{}m_2-e_1e_2\otimes{}m_1m_2.
$$
\end{lem}

\begin{proof}
Denote by $[\;,\;]$ the usual commutator, one has
$$
\begin{array}{rcl}
[u,z_1]&=&[u_0,e_1]\otimes1+[u_1,e_1]\otimes{}m_1+
\left([u_2,e_1]-2u_{12}\right)\otimes{}m_2+
\left([u_{12},e_1]-2u_{2}\right)\otimes{}m_1m_2,\\[8pt]
[u,z_2]&=&[u_0,e_2]\otimes1+[u_2,e_2]\otimes{}m_2+
\left([u_1,e_2]+2u_{12}\right)\otimes{}m_1+
\left([u_{12},e_2]+2u_{1}\right)\otimes{}m_1m_2.
\end{array}
$$
One obtains $[u_0, e_1]=[u_0,e_2]=0$, so that $u_0$ is proportional to 1.
Furthermore, one also obtains $[u_1,e_1]=0$ and $[u_2,e_2]=0$ that implies
$$
u_1=\l_1\,e_1+\mu_1\,1,
\qquad
u_2=\l_2\,e_2+\mu_2\,1.
$$
The equations $[u_2,e_1]-2u_{12}=0$ and $[u_1,e_2]+2u_{12}$ give
$$
u_{12}=\l_2\,e_2e_1
\qquad\hbox{and}\qquad
u_{12}=-\l_1\,e_1e_2,
$$
hence $\l_1=\l_2$, since $e_1$ and $e_2$ anticommute by assumption.
Finally, the equations $[u_{12},e_1]-2u_{2}=0$ and $[u_{12},e_2]+2u_{1}=0$
lead to $\mu_1=\mu_2=0.$

Hence the lemma.
\end{proof}

In the case (1), one obtains a contradiction because of the following statement.

\begin{lem}
\label{TechnLemBis}
One has $\dim{}Z_u\leq22$.
\end{lem}

\begin{proof}
Lemma \ref{TechnLem} implies that the element $u$ 
belongs to a subalgebra 
$$
\C[4]=\C[2]\otimes\C[2]\subset\bbO\otimes\C[2].
$$
We use the well-known classical fact that, for
an arbitrary element $u\in\C[4]$, the dimension of the centralizer inside $\C[4]$:
$$
\{X\in\C[4]\,|\,[X,u]=0\}
$$
is at most 10 (i.e., the codimension is $\leq6$).
Furthermore, the 4-dimensional space
of the elements $e_3\otimes1$, where $e_3\in\bbO$ anticommutes with $e_1,e_2$
is transversal to~$Z_u$.

It follows that the codimension of $Z_u$ is at least 10.
Hence the lemma.
\end{proof}

The cases (2) and (3) are less involved.
In the case (2), $u$ is proportional to
$e_1\otimes1$ and one checks that $Z_u=e_1\otimes\C[2]\oplus1\otimes\C[2]$
is of dimension 8.
In the case (3), $u$ is proportional to $1\otimes{}m_1$ so that 
$Z_u=\bbO\otimes1\oplus\bbO\otimes{}m_1$ is of dimension 16.
In each case, we obtain a contradiction.
\end{proof}

%%%%%%%%%%%%%%%%%%%%%%%%%%%%%%%%%
\subsection{The commutation graph}
%%%%%%%%%%%%%%%%%%%%%%%%%%%%%%%%%

We associate a non-oriented graph, that we call the commutation graph, to every
twisted group algebra
in the following way.
The vertices of the graph are the elements of $\left(\Z_2\right)^n$.
The (non-oriented) edges $x-y$ join the elements $x$ and $y$ such that
$u_x$ and $u_y$ anticommute.

\begin{prop}
\label{graphprop}
Given a complex algebra $(\C[(Z_2)^n],f)$ with symmetric function $\phi=\d{}f$,
the commutation graph completely determines the
structure of $\A$.
\end{prop}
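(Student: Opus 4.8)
The plan is to show that the commutation graph determines $\beta$, that $\beta$ then determines the generating function $\alpha$, and that $\alpha$ in turn determines the algebra up to isomorphism by Theorem~\ref{Thmalpha}. The key hypothesis is that $\phi = \delta f$ is symmetric, which by Theorem~\ref{AlphMainTh} guarantees the existence of a generating function $\alpha$, and by formula~\eqref{PhiBet} guarantees that $\phi$ is completely determined by $\beta$ as its defect of linearity.

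First I would observe that the commutation graph is precisely the data of the function $\beta$. Indeed, by definition an edge $x - y$ is present exactly when $u_x$ and $u_y$ anticommute, that is, when $\beta(x,y) = 1$; the absence of an edge records $\beta(x,y) = 0$. Since $\beta(x,x) = 0$ always, the diagonal carries no information and the graph (which has no loops) faithfully encodes the symmetric function $\beta$. Thus knowing the commutation graph is the same as knowing $\beta$ on all of $(\Z_2)^n \times (\Z_2)^n$.

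Next I would recover the full quasialgebra structure from $\beta$. Because $\phi$ is symmetric by hypothesis, identity~\eqref{PhiBet} expresses $\phi$ entirely in terms of $\beta$, so the pair $(\beta, \phi)$ is determined by the graph. By Theorem~\ref{AlphMainTh} the algebra admits a generating function $\alpha$, and $\alpha$ is recovered from $\beta$ by integration: since $\beta = \delta \alpha$, i.e. $\beta(x,y) = \alpha(x+y) + \alpha(x) + \alpha(y)$, and since $\alpha(0) = 0$, one reconstructs $\alpha$ inductively on the weight $|x|$. Concretely, setting $x = e_{i_1} + \cdots + e_{i_k}$ and peeling off one generator at a time, $\alpha(x)$ is forced by the values of $\beta$ on smaller elements together with the already-determined values of $\alpha$; the only freedom one might worry about is a class in $H^1$, but the values $\beta(e_i, e_i) = 0$ and the constraint $\alpha(0)=0$ pin down $\alpha$ up to at most the linear part, which as noted in the proof of Proposition~\ref{SimSimProp} does not affect $\beta$ or $\phi$ and hence, in the complex case, does not affect the algebra.

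Finally, once $\alpha$ is determined (up to an irrelevant linear term), Theorem~\ref{Thmalpha} states that a complex twisted group algebra is determined up to graded isomorphism by its generating function, so the structure of $\A$ is completely fixed by the commutation graph. The main obstacle I expect is the reconstruction step: verifying that $\beta$ really does determine $\alpha$ and not merely $\delta\alpha$, i.e. controlling the ambiguity living in $H^1((\Z_2)^n;\Z_2)$. The resolution is that the linear summand $\alpha_1 = \sum_i x_i$ lies in the kernel of both $\delta$ and $\delta_2$, so altering $\alpha$ by it changes neither $\beta$ nor $\phi$; combined with Proposition~\ref{VeryFirstProp}, this shows the induced complex algebras coincide, which is exactly what \emph{completely determines the structure} should mean.
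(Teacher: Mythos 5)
Your proof is correct and follows essentially the paper's own route: the graph encodes $\b$, the symmetry of $\phi$ plus formula~\eqref{PhiBet} recovers $\phi$ from $\b$, and Proposition~\ref{VeryFirstProp} then yields the isomorphism in the complex case. The detour through explicitly reconstructing the generating function $\a$ (and the $H^1$ ambiguity) is harmless but unnecessary, since once $(\b,\phi)$ is known Proposition~\ref{VeryFirstProp} — which you invoke at the end anyway, and of which Theorem~\ref{Thmalpha} is itself a corollary — already finishes the argument.
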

\begin{proof}
In the case where $\phi$ is symmetric, formula (\ref{PhiBet}) and
Proposition \ref{VeryFirstProp} imply that the graph determines the
structure of the algebra $\A$, up to signature.
\end{proof}

This means, two complex algebras, $\A$ and $\A'$ corresponding to the same
commutation graph are isomorphic.
Conversely, two algebras, $\A$ and $\A'$
with different commutation graphs,
are not isomorphic as $(\Z_2)^n$-graded algebras.
However, we do not know if there might exist an isomorphism
that does not preserve the grading.

\begin{ex}
The algebra $\bbM_{3}$ is the first non-trivial algebra of the series $\bbM_{n}$.
The corresponding commutation graph is presented in Figure \ref{M3Alg}, together
with the graph of the Clifford algebra $\Cl_3$.

\begin{figure}[hbtp]
\includegraphics[width=11cm]{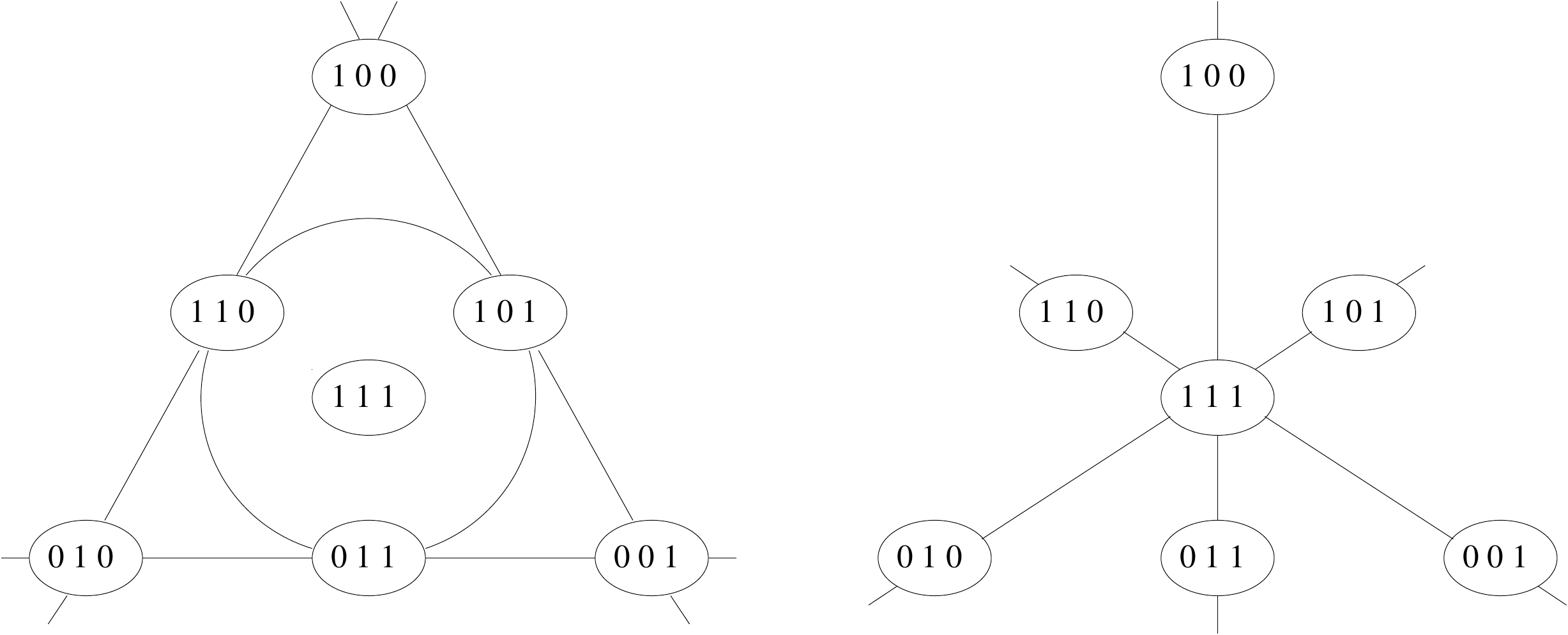}
\caption{The algebras $\Cl_3$ and $\bbM_3$.}
\label{M3Alg}
\end{figure}
\noindent

The algebra $\Cl_3$ is not simple: $\Cl_3=\C[2]\oplus\C[2]$.
It contains a central element $u_{(1,1,1)}$ corresponding to a
``singleton'' in Figure \ref{M3Alg}.

\end{ex}

\begin{rem}
(a)
The defined planar graph is \textit{dual trivalent},
that is, every edge
represented by a projective line or a circle,
see Figure \ref{M3Alg}, contains exactly 3 elements.
Indeed, any three homogeneous elements
$u_x,u_y$ and $u_{x+y}$ either commute or anticommute
with each other.
This follows from the tri-linearity of $\phi$.

(b)
We also notice that the superposition of the graphs
of $\Cl_n$ and $\bbM_n$ is precisely
the graph of the algebra $\bbO_n$.
We thus obtain the following ``formula'': $\Cl+\bbM=\bbO$.
\end{rem}

\begin{figure}[hbtp]
\includegraphics[width=11cm]{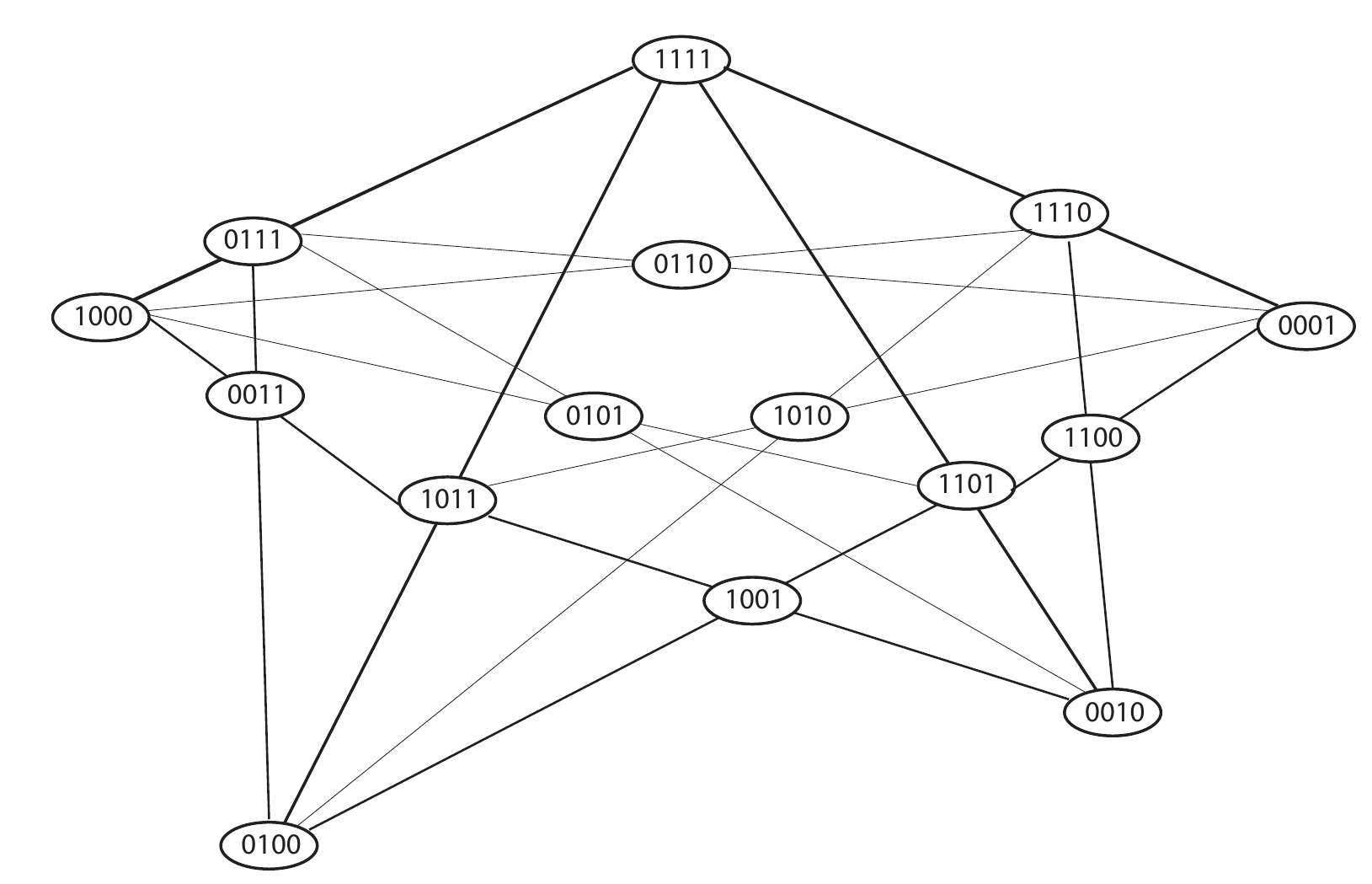}
\caption{The commutation graph of $\bbM_4$.}
\label{M4Alg}
\end{figure}

\begin{ex}
The commutation graph of the algebra $\bbM_4$
is presented in Figure \ref{M4Alg}.

The commutation graph of the Clifford algebra $\Cl_4$ is is presented in Figure \ref{C4Alg}.
Note that both algebras, $\bbM_4$ and $\Cl_4$ are simple.
The superposition of the graphs of $\bbM_4$ and $\Cl_4$ cancels
all the edges from $(1,1,1,1)$.
Therefore, the element $(1,1,1,1)$ is a singleton in the graph of
the algebra $\bbO_4$.
This corresponds to the fact that $u_{(1,1,1,1)}$ in $\bbO_4$ is a central,
in particular, $\bbO_4$ is not simple.

\begin{figure}[hbtp]
\includegraphics[width=11cm]{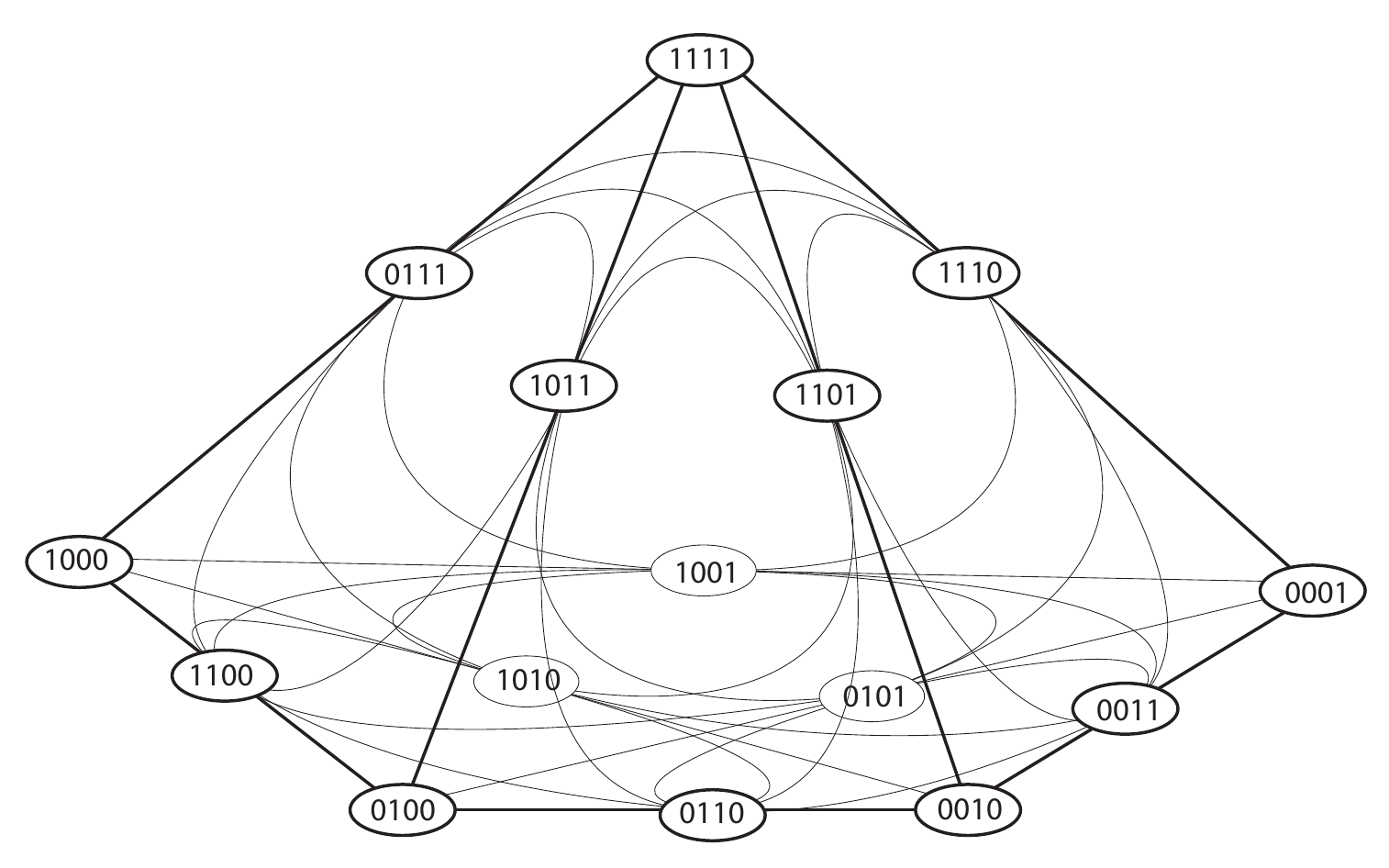}
\caption{The commutation graph of $\Cl_4$.}
\label{C4Alg}
\end{figure}

\end{ex}

The planar graph provides a nice way
to visualize the algebra $(\tA,f)$.

%%%%%%%%%%%%%%%%%%%%%%%%%%%%%%%%%
%%%%%%%%%%%%%%%%%%%%%%%%%%%%%%%%%
\section{Generating functions: existence and uniqueness}\label{AntiSex}
%%%%%%%%%%%%%%%%%%%%%%%%%%%%%%%%%
%%%%%%%%%%%%%%%%%%%%%%%%%%%%%%%%%

In this section we prove Theorem \ref{AlphMainTh} and its corollaries.
Our main tool is the notion of generating function.
We show that the structure of all the algebras we consider in this paper
is determined (up to signature) by a single function of one argument
$\a:\left(\Z_2\right)^n\to\Z_2.$
This of course simplifies the understanding of these algebras.

%%%%%%%%%%%%%%%%%%%%%%%%%%%%%%%%%
\subsection{Existence of a generating function}\label{ExProoS}
%%%%%%%%%%%%%%%%%%%%%%%%%%%%%%%%%

Given a $(\Z_2)^n$-graded quasialgebra, let us prove that
there exists a generating function $\a$ if and only
if the ternary map $\phi$ is symmetric.

The condition that $\phi$ is symmetric is of course necessary
for existence of $\a$, cf. formula (\ref{Genalp2}), let us prove that this condition is,
indeed, sufficient.

\begin{lem}
\label{dbetZero}
If $\phi$ is symmetric, then $\b$ is a 2-cocycle: $\d\b=0$.
\end{lem}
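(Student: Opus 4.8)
The plan is to compute $\d\b$ directly and show that it vanishes mod $2$ as a consequence of the symmetry of $\phi$. First I would recall, from the coboundary formula of Section~\ref{CSec} specialized to the $2$-cochain $\b$, that
$$
\d\b(x,y,z)=\b(x,y)+\b(x+y,z)+\b(x,y+z)+\b(y,z).
$$
Since everything takes values in $\Z_2$, the signs in the coboundary play no role, and the whole task reduces to checking that the terms above cancel in pairs once the symmetry hypothesis is used.

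The cleanest route uses only (\ref{bef}) and (\ref{phef}). Writing $\b=f+\tau f$, where $(\tau f)(x,y)=f(y,x)$, linearity of the coboundary gives $\d\b=\d f+\d(\tau f)=\phi+\d(\tau f)$, using $\phi=\d f$. Expanding $\d(\tau f)(x,y,z)$ term by term and comparing with $\phi(z,y,x)=\d f(z,y,x)$, I expect the two to agree identically, so that
$$
\d\b(x,y,z)=\phi(x,y,z)+\phi(z,y,x).
$$
The symmetry hypothesis (\ref{SymEq}) then gives $\phi(x,y,z)=\phi(z,y,x)$, and over $\Z_2$ the right-hand side is zero, whence $\d\b=0$. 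Alternatively, and perhaps most transparently given what is already available, I would invoke formula (\ref{PhiBet}): under symmetry of $\phi$ it yields $\b(x+y,z)=\phi(x,y,z)+\b(x,z)+\b(y,z)$ and $\b(x,y+z)=\phi(x,y,z)+\b(x,y)+\b(x,z)$. Substituting both into the expression for $\d\b$, the two copies of $\phi(x,y,z)$ cancel, as do the pairs $\b(x,y)$, $\b(y,z)$ and the two copies of $\b(x,z)$, leaving $\d\b(x,y,z)=0$.

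There is no genuine obstacle here: the statement reduces to a finite cancellation in characteristic $2$. The only point needing minimal care is bookkeeping — fixing the coboundary convention of Section~\ref{CSec} and verifying either that the expansion of $\d(\tau f)$ really equals $\phi(z,y,x)$, or that the substitutions coming from (\ref{PhiBet}) pair off exactly. Because the ground is $\Z_2$, no sign tracking is required, which is precisely what makes the argument so short.
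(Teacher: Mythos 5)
Your proposal is correct and, in its second variant, is essentially the paper's own proof: the paper simply adds the two expressions of $\phi$ in (\ref{PhiBet}), so that the two copies of $\phi(x,y,z)$ and of $\b(x,z)$ cancel over $\Z_2$, leaving exactly the 2-cocycle condition $\d\b=0$. Your first route (writing $\b=f+\tau f$ and checking $\d(\tau f)(x,y,z)=\phi(z,y,x)$, hence $\d\b=\phi(x,y,z)+\phi(z,y,x)$) is a harmless direct-computation variant that also works, since the transpositions in (\ref{SymEq}) generate the full symmetric group and give $\phi(z,y,x)=\phi(x,y,z)$.
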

\begin{proof}
If $\phi$ is symmetric then the identity (\ref{PhiBet}) is satisfied.
In particular, the sum of the two expressions of $\phi$ gives:
$$
\b(x+y,\,z)+\b(x,\,y+z)+\b(x,y)+\b(y,z)=0
$$
which is nothing but the 2-cocycle condition $\d\b=0$.
\end{proof}

Using the information about the second cohomology
space $H^2((\Z_2)^n;\Z_2)$, as in the proof of Proposition \ref{VeryFirstProp},
we deduce that $\b$ is of the form
$$
\b(x,y)=\d\a(x,y)+
\sum_{i\in{}I}x_iy_i+
\sum_{(k,\ell)\in{}J}x_ky_\ell,
$$
where $\a:(\Z_2)^n\to\Z_2$ is an arbitrary function and
where $I$ is a subset of $\{1,\ldots,n\}$ and
$J$ is a subset of $\{(k,\ell)\,|\,k<\ell\}$.
Indeed, the second and the third terms are the most general non-trivial
2-cocycles on $(\Z_2)^n$ with coefficients in $\Z_2$.

Furthermore, the function $\b$ satisfies two properties:
it is symmetric and $\b(x,x)=0$.
The second property implies that $\b$ does not contain
the  terms $x_iy_i$.
The symmetry of $\b$ means that whenever there is a term
 $x_ky_\ell$, there is $x_\ell{}y_k$,
as well.
But, $x_ky_\ell+x_\ell{}y_k$ is a coboundary of $x_kx_\ell$.
We proved that $\b=\d\a$, which is equivalent to the identity \eqref{Genalp1}.

Finally,
using the equality \eqref{PhiBet}, we also obtain the identity \eqref{Genalp2}.

Theorem \ref{AlphMainTh} is proved.

%%%%%%%%%%%%%%%%%%%%%%%%%%%%%%%%%
\subsection{Generating functions are cubic}\label{CubSecG}
%%%%%%%%%%%%%%%%%%%%%%%%%%%%%%%%%

In this section, we prove Proposition \ref{AlpDeg3}.
We show that a function $\a:(\Z_2)^n\to\Z_2$ is a generating function of a
$(\Z_2)^n$-graded quasialgebra if and only if $\a$ is a polynomial
of degree $\leq3$.

The next statement is an application of the pentagonal diagram
in Figure~\ref{5and6}.

\begin{lem}
\label{KeyLem}
A generating function $\a:(\Z_2)^n\to\Z_2$ satisfies the
equation $\d_3\a=0$, where the map $\d_3$ is defined by
\begin{equation}
\label{DelTrequat}
\begin{array}{rl}
\d_3\a\,(x,y,z,t):=&
\a(x+y+z+t)\\[4pt]
&+\a(x+y+z)+\a(x+y+t)+\a(x+z+t)+\a(y+z+t)\\[4pt]
&+\a(x+y)\!+\!\a(x+z)\!+\!\a(x+t)\!+\!\a(y+z)\!+\!\a(y+t)\!+\!\a(z+t)\\[4pt]
&+\a(x)+\a(y)+\a(z)+\a(t).
\end{array}
\end{equation}
\end{lem}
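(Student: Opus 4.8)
The plan is to recognize the operator $\d_3$ as the composition of the ordinary group-cohomology coboundary $\d$ with the polarization operator $\d_2$, and then to invoke the $3$-cocycle condition already recorded in the pentagonal diagram. Since $\a$ is a generating function, formula (\ref{Genalp2}) gives $\phi=\d_2\a$; and from $\phi=\d f$ together with $\d^2=0$ (equivalently, from the pentagonal identity (\ref{DeltaPhi})) we already know that $\phi$ is a $3$-cocycle, i.e. $\d\phi=0$. So it suffices to prove the identity $\d\phi=\d_3\a$ of $4$-cochains on $(\Z_2)^n$.

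First I would write out the coboundary of $\phi$ using the explicit formula for $\d$ on $3$-cochains, namely
$$
\d\phi(x,y,z,t)=\phi(x,y,z)+\phi(x+y,z,t)+\phi(x,y+z,t)+\phi(x,y,z+t)+\phi(y,z,t),
$$
which is precisely the left-hand side of (\ref{DeltaPhi}). I would then substitute $\phi=\d_2\a$ into each of these five summands, expanding every $\phi(\cdot,\cdot,\cdot)$ into its seven $\a$-terms according to (\ref{Genalp2}).

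The main, and essentially only, step is the resulting bookkeeping. I would collect the $35$ terms so produced and count, modulo $2$, how many times each subset-sum $\a\!\left(\sum_{i\in S}x_i\right)$ occurs, where $S$ runs over the nonempty subsets of $\{x,y,z,t\}$. One checks that each of the $15$ such terms appears an odd number of times (either once or three times), so that the total collapses exactly to the fifteen-term expression (\ref{DelTrequat}) defining $\d_3\a$. This establishes $\d\phi=\d_3\a$, and hence $\d_3\a=\d\phi=0$, which is the assertion.

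The only delicate point is the combinatorial verification that $\d\circ\d_2=\d_3$ on $\a$; this is a purely mechanical cancellation count over $\Z_2$, and it is the concrete content behind the statement that $\d_3$ is governed by the pentagonal diagram. I expect no conceptual obstacle beyond organizing this count carefully, and the symmetry of the subset-sum terms makes the cancellations transparent.
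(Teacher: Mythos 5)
Your proposal is correct and follows essentially the same route as the paper, whose proof is precisely ``substitute (\ref{Genalp2}) into $\d\phi=0$ to obtain $\d_3\a=0$''; you have merely spelled out the bookkeeping (which does check out: each of the $15$ subset-sum terms occurs once or three times among the $35$ terms, so $\d\circ\d_2=\d_3$ on functions of one argument). No gap.
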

\begin{proof}
This follows immediately from the fact that $\phi$ is a 3-cocycle:
substitute (\ref{Genalp2}) to the equation $\d\phi=0$ to obtain
$\d_3\a=0$.
\end{proof}

The following statement characterizes polynomials of degree $\leq3$.

\begin{lem}
\label{ThOrdProp}
A function $\a:(\Z_2)^n\to\Z_2$ is a polynomial of degree $\leq3$
if and only if $\d_3\a=0$.
\end{lem}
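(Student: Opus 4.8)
The plan is to prove the equivalence between the operator condition $\d_3\a=0$ and the polynomial degree bound by reducing everything to an explicit computation of $\d_3$ on monomials. The key observation is that the map $\d_3$ defined in (\ref{DelTrequat}) is the natural ``third polarization'' of $\a$, built from the complete alternating sum of $\a$ evaluated at all partial sums of the four arguments $x,y,z,t$. Since both the space of functions $(\Z_2)^n\to\Z_2$ and the operator $\d_3$ are linear over $\Z_2$, it suffices to understand the action of $\d_3$ on the monomial basis $x_{i_1}\cdots x_{i_k}$ described in Section \ref{Poly}, and then argue that $\d_3$ annihilates exactly the monomials of degree $\leq 3$.

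\textbf{Reduction to monomials.}
First I would note that since every function $\a$ is uniquely a $\Z_2$-linear combination of squarefree monomials $m_S(x)=\prod_{i\in S}x_i$ (with $S\subseteq\{1,\dots,n\}$), and $\d_3$ is $\Z_2$-linear, the problem splits: I must compute $\d_3 m_S$ for each $S$ and show that $\d_3 m_S=0$ precisely when $|S|\leq 3$. The crucial multiplicativity fact is that for a product of coordinates, $m_S(x+y)=\prod_{i\in S}(x_i+y_i)$ expands as a sum over the ways of distributing each index to one of the two arguments. More generally, substituting a sum of several of the variables $x,y,z,t$ into $m_S$ produces a sum of products where each index $i\in S$ independently chooses one of the summed variables.

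\textbf{The combinatorial core.}
The main computation, and the step I expect to be the genuine obstacle, is evaluating the full alternating sum $\d_3 m_S(x,y,z,t)$. Expanding each of the sixteen terms of (\ref{DelTrequat}) by the distribution rule above, I would group the result by the monomial it contributes in the $4\lvert S\rvert$ variables $\{x_i,y_i,z_i,t_i\}_{i\in S}$. A given monomial corresponds to a choice, for each index $i\in S$, of which of the four ``colors'' $x,y,z,t$ it carries; call the set of colors actually used $C\subseteq\{x,y,z,t\}$. The coefficient of this monomial in $\d_3 m_S$ is the number (mod $2$) of terms $\a(\text{partial sum over a subset }T)$ in (\ref{DelTrequat}) for which $T\supseteq C$, since only those partial sums contain all the required colors. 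Counting subsets $T$ of $\{x,y,z,t\}$ with $T\supseteq C$ that appear in the alternating sum, one finds this count is $2^{4-|C|}$, which is even whenever $|C|<4$ and equals $1$ (odd) exactly when $C=\{x,y,z,t\}$. Hence every surviving monomial must use all four colors, which forces $|S|\geq 4$; conversely for $|S|\geq 4$ one exhibits a four-colored monomial with odd coefficient, so $\d_3 m_S\neq 0$. This is the whole content: $\d_3 m_S=0\iff |S|\leq 3$.

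\textbf{Assembling the equivalence.}
With the monomial computation in hand, the two directions follow immediately by linearity. If $\a$ has degree $\leq 3$, then $\a$ is a combination of monomials $m_S$ with $|S|\leq 3$, each killed by $\d_3$, so $\d_3\a=0$. Conversely, write $\a=\sum_S \l_S\, m_S$ in its unique minimal form; the monomials $m_S$ with $|S|\geq 4$ produce, under $\d_3$, nonzero and $\Z_2$-linearly independent functions (distinguished by the four-colored monomials identified above), so $\d_3\a=0$ forces $\l_S=0$ for all $|S|\geq 4$, i.e.\ $\deg\a\leq 3$. The only care needed is the independence claim in the converse: distinct sets $S$ of size $\geq 4$ yield four-colored surviving terms supported on disjoint variable sets, so no cancellation between different $S$ can occur, and the argument is clean.
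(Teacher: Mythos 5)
Your argument is the standard combinatorial-polarization proof; the paper itself offers no proof beyond ``this is elementary'' and a citation of \cite{War,DV}, and what you have written is essentially the content of those references. The core computation is right: reduce to squarefree monomials $m_S$, expand $m_S\bigl(\sum_{v\in T}v\bigr)$ as a sum over colorings $c:S\to T$, and observe that the coefficient of a coloring with image $C$ is the number of subsets $T$ appearing in \eqref{DelTrequat} with $C\subseteq T$, so that only surjective colorings survive; hence $\d_3 m_S=0$ for $1\leq|S|\leq 3$ and $\d_3 m_S\neq0$ for $|S|\geq4$. The no-cancellation step in the converse is also sound, although the surviving monomials coming from distinct $S$ have \emph{distinct} index supports (the support recovers $S$), not disjoint ones as you write.

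There is one genuine slip: $\d_3$ as defined in \eqref{DelTrequat} has \emph{fifteen} terms, not sixteen --- the empty subset $T=\emptyset$, which would contribute $\a(0)$, is omitted. For a nonempty color set $C$ all $2^{4-|C|}$ supersets of $C$ do occur among the fifteen, so your parity count is correct whenever $S\neq\emptyset$; but for the constant monomial $m_\emptyset=1$ the true count is $15$, which is odd, so $\d_3(1)=1\neq0$. Your claimed criterion ``$\d_3 m_S=0\iff|S|\leq3$'' therefore fails at $S=\emptyset$, and the forward implication as you state it is false for polynomials with a nonzero constant term: $\a\equiv1$ has degree $0$ but $\d_3\a=1$. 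This is as much an imprecision in the lemma's statement as in your proof; the equivalence holds exactly for functions with $\a(0)=0$, which is automatic for generating functions (Section \ref{DefGenFn}), the only setting in which the lemma is invoked. You should either add the hypothesis $\a(0)=0$ or treat the constant term separately before assembling the two directions by linearity.
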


\begin{proof}
This is elementary, see also \cite{War,DV}.
\end{proof}

Proposition \ref{AlpDeg3} is proved.

\medskip

Let us now prove Corollary \ref{TriCol}.
If the map $\phi$ is symmetric, then Theorem \ref{AlphMainTh}
implies the existence of the generating function $\a$.
The map $\phi$ is then given by (\ref{Genalp2}).
One checks by an elementary calculation that 
$$
\phi(x+y,z,t)+\phi(x,z,t)+\phi(y,z,t)=\d_3\a(x,y,z,t).
$$
By Lemma \ref{KeyLem}, one has  $\d_3\a=0$.
It follows that $\phi$ is trilinear.

Furthermore, from \eqref{PhiBet}, we deduce that $\phi$ is alternate.

Corollary \ref{TriCol} is proved.

%%%%%%%%%%%%%%%%%%%%%%%%%%%%%%%%%
\subsection{Uniqueness of the generating function}\label{Unisex}
%%%%%%%%%%%%%%%%%%%%%%%%%%%%%%%%%

Let us show that there is a canonical way to choose a generating function.

\begin{lem}
\label{unipro}
(i)
Given a $(\Z_2)^n$-graded quasialgebra $\A$ with a generating function,
one can choose the generating function in such a way that it
satisfies
\begin{equation}
\label{NormAlp}
\left\{
\begin{array}{rl}
\a(0)=0,&\\[4pt]
\a(x)=1, & |x|=1.
\end{array}
\right.
\end{equation}

(ii)
There exists a unique generating function of $\A$ satisfying (\ref{NormAlp}).
\end{lem}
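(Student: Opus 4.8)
The plan is to prove the two parts of Lemma \ref{unipro} in sequence, exploiting the fact (from Proposition \ref{AlpDeg3}) that any generating function $\a$ is a polynomial of degree $\leq 3$, together with the observation that $\a$ is only determined by the quasialgebra structure up to a controlled ambiguity. The key remark is that the functions $\b=\d\a$ and $\phi=\d_2\a$ depend on $\a$ only through its degree-$2$ and degree-$3$ homogeneous parts; the degree-$1$ part of $\a$ is a linear function $\ell(x)=\sum_i \l_i x_i$, and since $\d\ell=0$ and $\d_2\ell=0$ (a linear function polarizes to zero), adding any linear function to $\a$ leaves both $\b$ and $\phi$ unchanged. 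This is exactly the statement, made earlier in the proof of Proposition \ref{SimSimProp}, that $\a_1$ does not contribute to the quasialgebra structure. So the linear part of $\a$ is completely free, and this is the only freedom available.

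For part (i), I would argue as follows. We already know $\a(0)=0$ from the definition of a generating function (formula (\ref{Genalp1}) and the remark following it). Given any generating function $\a_0$, write its degree-$1$ part as $\sum_i c_i x_i$ with $c_i = \a_0(\e_i)$, where $\e_i$ denotes the $i$-th standard generator (so $|\e_i|=1$). Since the values $\a_0(x)$ for $|x|=1$ are precisely the coefficients $c_i$, I want to correct each of them to $1$. Set
\begin{equation}
\a(x) = \a_0(x) + \sum_{i=1}^n (1+c_i)\,x_i.
\end{equation}
The added term is linear, hence does not change $\b$ or $\phi$, so $\a$ is still a generating function for $\A$. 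Moreover $\a(0)=0$ is preserved and, evaluating at $x=\e_j$, every degree-$\geq2$ monomial vanishes and one computes $\a(\e_j)=c_j + (1+c_j)=1$. Thus $\a$ satisfies (\ref{NormAlp}).

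For part (ii), uniqueness, suppose $\a$ and $\a'$ both satisfy (\ref{NormAlp}) and are generating functions of the same $\A$. Their difference $\gamma=\a+\a'$ then satisfies $\d\gamma=\b-\b=0$ and $\d_2\gamma=\phi-\phi=0$; being a generating-function difference it is a polynomial of degree $\leq 3$ (by Proposition \ref{AlpDeg3}, or directly by Lemma \ref{ThOrdProp}). The condition $\d\gamma=0$ forces $\gamma$ to be \emph{additive}, i.e.\ $\gamma(x+y)=\gamma(x)+\gamma(y)$; combined with it being a polynomial, this means $\gamma$ has no nonconstant monomials of degree $\geq 2$ (a genuine degree-$2$ or degree-$3$ monomial fails additivity), so $\gamma$ is linear: $\gamma(x)=\sum_i d_i x_i$. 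But the normalization (\ref{NormAlp}) gives $\gamma(\e_i)=\a(\e_i)+\a'(\e_i)=1+1=0$ over $\Z_2$, whence each $d_i=\gamma(\e_i)=0$. Therefore $\gamma\equiv 0$ and $\a=\a'$.

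The main obstacle, and the only step requiring a genuine argument rather than bookkeeping, is verifying that $\d\gamma=0$ together with $\deg\gamma\leq 3$ forces $\gamma$ to be linear. I would handle this by expanding $\gamma$ in its uniquely-represented polynomial form (Section \ref{Poly}) and checking that the polarization identity $\d\gamma(x,y)=\gamma(x+y)+\gamma(x)+\gamma(y)$ sends any homogeneous component of degree $d\geq 2$ to a nonzero polynomial of degree $d$ in the pair of arguments, so the vanishing of $\d\gamma$ kills each such component separately. This is the same elementary polarization principle invoked for $\d_3$ in Lemma \ref{ThOrdProp}, applied one degree lower, and presents no real difficulty.
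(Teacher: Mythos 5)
Your proposal is correct and follows essentially the same route as the paper: both exploit that the generating function is determined up to addition of a $1$-cocycle (equivalently, a linear function), use such an addition to normalize the values on weight-one elements, and obtain uniqueness because the difference of two normalized generating functions is an additive function vanishing on the standard basis, hence zero. Your explicit correction term $\sum_i(1+c_i)x_i$ and the direct verification that $\d\gamma=0$ forces linearity are just slightly more detailed versions of the paper's argument.
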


\begin{proof}
Part (i).
Every generating function $\a$ vanishes on the zero element
$0=(0,\ldots,0)$, cf. Section~\ref{DefGenFn}.
Furthermore, a generating function corresponding to a given algebra $\A$,
is defined up to a 1-cocycle on $(\Z_2)^n$.
Indeed, the functions $\b=\d\a$ and $\phi=\d_2\a$
that define the quasialgebra structure do not change if one adds a 1-cocycle to $\a$.
Since every 1-cocycle is a linear function, we obtain
$$
\a(x)\sim\a(x)+\sum_{1\leq{}i\leq{}n}\l_i\,x_i.
$$
One therefore can normalize $\a$ in such a way that $\a(x)=1$ for all $x$
such that $|x|=1$.

Part (ii).
The generating function normalized in this way is unique.
Indeed, any other function, say $\a'$, satisfying (\ref{NormAlp}) differs from $\a$
by a polynomial of degree $\geq2$, so that $\a-\a'$ cannot be a 1-cocycle.
Therefore, $\b'\not=\b$ which means the quasialgebra structure is different.
\end{proof}

We will assume the normalization (\ref{NormAlp}) in the sequel, whenever we
speak of \textit{the} generating function corresponding to a given algebra.

Let us now consider an algebra $\A$ with $n$ generators $u_1,\ldots,u_n$.
The group of permutations $\gS_n$ acts on $\A$ by permuting the generators.

\begin{cor}
\label{SimLem}
If the group of permutations $\gS_n$ acts on $\A$ by automorphisms,
then the corresponding generating function $\a$ is $\gS_n$-invariant.
\end{cor}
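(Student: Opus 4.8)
The plan is to reduce the statement to the uniqueness part of Lemma~\ref{unipro}. First I would fix a permutation $\sigma\in\gS_n$ and denote by $T_\sigma$ the corresponding automorphism of $\A$, so that $T_\sigma(u_i)=u_{\sigma(i)}$ on the generators \eqref{General}; I write $\sigma\cdot x$ for the natural (coordinate-permuting, hence linear) action of $\sigma$ on $(\Z_2)^n$. The first step is to observe that $T_\sigma$ is automatically a \emph{graded} automorphism: since $\A$ is generated by the $u_i$ and each homogeneous component is one-dimensional, every basis element $u_x$ is a monomial in the $u_i$ with $i$ ranging over the support of $x$, and applying $T_\sigma$ produces a monomial of degree $\sigma\cdot x$. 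As the component of degree $\sigma\cdot x$ is spanned by $u_{\sigma\cdot x}$, one gets $T_\sigma(u_x)=c_{\sigma,x}\,u_{\sigma\cdot x}$ for some nonzero scalar $c_{\sigma,x}$.

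The second step is to transport the quasialgebra structure. Applying $T_\sigma$ to the defining relations \eqref{DeffiC} and \eqref{Deffi} and using that the same product of scalar factors $c_{\sigma,\cdot}$ occurs on both sides of each relation (the same monomials appear, merely reordered and reassociated) so that they cancel, I would read off
\begin{equation*}
\b(\sigma\cdot x,\,\sigma\cdot y)=\b(x,y),
\qquad
\phi(\sigma\cdot x,\,\sigma\cdot y,\,\sigma\cdot z)=\phi(x,y,z),
\end{equation*}
so that both $\b$ and $\phi$ are $\gS_n$-invariant.

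Finally I would invoke uniqueness. Set $\tilde\a(x):=\a(\sigma\cdot x)$. Because $\sigma$ acts linearly on $(\Z_2)^n$, the polarization formul{\ae} \eqref{Genalp1} and \eqref{Genalp2} give $\d\tilde\a=\b$ and $\d_2\tilde\a=\phi$, i.e.\ $\tilde\a$ is again a generating function of $\A$. Moreover $\tilde\a$ satisfies the normalization \eqref{NormAlp}, since $\sigma$ fixes $0$ and preserves the weight $|x|$. By Lemma~\ref{unipro}(ii) the normalized generating function is unique, whence $\tilde\a=\a$; that is, $\a(\sigma\cdot x)=\a(x)$ for all $x$ and all $\sigma$, which is exactly the $\gS_n$-invariance of $\a$.

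The one point that requires care — and which I expect to be the main obstacle — is the first step: justifying in the non-associative setting that $T_\sigma$ must respect the grading (equivalently, that it scales each one-dimensional homogeneous component), since without this the cancellation of scalars in the second step, and hence the invariance of $\b$ and $\phi$, would not go through. Once gradedness is secured, the remainder is the formal argument above, whose substance is entirely carried by the already-established uniqueness of the normalized generating function.
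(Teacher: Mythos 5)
Your proposal is correct and follows essentially the same route as the paper: the paper's proof also observes that $\a\circ\sigma$ is again a generating function satisfying the normalization \eqref{NormAlp} and then invokes the uniqueness statement of Lemma~\ref{unipro}(ii). Your write-up merely makes explicit the steps the paper leaves implicit (that the permutation action is graded and that $\b$ and $\phi$ are therefore $\gS_n$-invariant), and these details are verified correctly.
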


\begin{proof}
Let $\a$ be a generating function.
Since the algebra $\A$ is stable with respect to the $\gS_n$-action,
the function $\a\circ{}\s$ is again a generating function.
If, moreover, $\a$ satisfies~(\ref{NormAlp}), then
$\a\circ{}\s$ also satisfies this condition.
The uniqueness Lemma \ref{unipro} implies that
$\a\circ{}\s=\a$.
\end{proof}

Note that the converse statement holds in the complex case,
but fails in the real case.

%%%%%%%%%%%%%%%%%%%%%%%%%%%%%%%%%
\subsection{From the generating function to the twisting function}\label{FroSec}
%%%%%%%%%%%%%%%%%%%%%%%%%%%%%%%%%

Given an arbitrary polynomial map $\a:(\Z_2)^n\to\Z_2$ of $\deg\a\leq3$
such that $\a(0)=0$,
there is a simple way to associate a twisting function $f$
such that $(\bbK[(\Z_2)^n],f)$ admits $\a$ as a generating function.

\begin{prop}
\label{XXProp}
There exists a twisting function $f$
satisfying the property
\begin{equation}
\label{XXAlpha}
f(x,x)=\a(x).
\end{equation}
\end{prop}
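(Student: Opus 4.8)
The plan is to produce $f$ explicitly by polarizing the cubic polynomial $\a$ in a controlled way, mimicking the structure already seen in the examples $f_\bbO$ and $f_\bbM$. Since $\deg\a\le 3$ and $\a(0)=0$, we may write $\a$ as a sum of its homogeneous pieces $\a = \a^{(3)}+\a^{(2)}+\a^{(1)}$, with
$$
\a^{(3)}(x)=\sum_{i<j<k}\lambda_{ijk}\,x_ix_jx_k,
\qquad
\a^{(2)}(x)=\sum_{i<j}\nu_{ij}\,x_ix_j,
\qquad
\a^{(1)}(x)=\sum_i \kappa_i\,x_i.
$$
For each homogeneous piece I would write down a natural two-argument lift. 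For the cubic part the natural choice is $f^{(3)}(x,y)=\sum_{i<j<k}\lambda_{ijk}\,(x_ix_jy_k+x_iy_jx_k+y_ix_jx_k)$, which is precisely the pattern of (\ref{ForgProd}); for the quadratic part $f^{(2)}(x,y)=\sum_{i<j}\nu_{ij}\,x_iy_j$; and for the linear part $f^{(1)}(x,y)=\sum_i\kappa_i\,x_iy_i$. Setting $f=f^{(3)}+f^{(2)}+f^{(1)}$ gives a candidate twisting function.

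The verification then splits into two independent checks. First, $f(0,\cdot)=f(\cdot,0)=0$, which is immediate since every monomial of $f$ contains at least one $x$-factor and at least one $y$-factor. Second, and this is the heart of the matter, I must verify the diagonal identity (\ref{XXAlpha}): substituting $y=x$ and using $x_i^2=x_i$ in $\Z_2$, each lift collapses back onto its homogeneous piece. Concretely $f^{(3)}(x,x)=\sum_{i<j<k}3\,x_ix_jx_k=\sum_{i<j<k}x_ix_jx_k=\a^{(3)}(x)$ since $3\equiv 1\pmod 2$, while $f^{(2)}(x,x)=\a^{(2)}(x)$ and $f^{(1)}(x,x)=\a^{(1)}(x)$ trivially. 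Summing, $f(x,x)=\a(x)$.

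It remains to confirm that this $f$ does in fact have $\a$ as its generating function, i.e.\ that $\b=\d\a$ and $\phi=\d_2\a$ for the quasialgebra structure attached to $f$ by (\ref{bef}) and (\ref{phef}). By Theorem \ref{AlphMainTh}, once we know the twisted algebra $(\tA,f)$ has a symmetric $\phi$, a generating function exists; and by the normalization/uniqueness discussion one checks the generating function coincides with $\a$. The cleanest route is to observe directly that $\phi=\d f$ computed from the above $f$ is symmetric and trilinear—indeed the cubic lift $f^{(3)}$ is exactly the one producing the symmetric $\phi(x,y,z)=\sum_{i\neq j\neq k}x_iy_jz_k$, while $f^{(2)}$ and $f^{(1)}$ are $2$-cocycles contributing nothing to $\phi$. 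The main obstacle, and the only place requiring genuine care, is the mod-$2$ bookkeeping in the diagonal substitution for the cubic term: one must be sure that the three symmetrized monomials $x_ix_jy_k+x_iy_jx_k+y_ix_jx_k$ collapse to a single copy of $x_ix_jx_k$ (the factor $3$, not $2$ or $0$), which is exactly what makes $\deg 3$ special and is the reason the statement is asserted only for $\deg\a\le 3$.
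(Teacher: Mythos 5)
Your construction is exactly the paper's: the proof of Proposition \ref{XXProp} consists precisely of the monomial-by-monomial lifting rule \eqref{Fakset} ($x_ix_jx_k\mapsto x_ix_jy_k+x_iy_jx_k+y_ix_jx_k$, $x_ix_j\mapsto x_iy_j$, $x_i\mapsto x_iy_i$), and your mod-$2$ check of the diagonal identity (the factor $3\equiv 1$) is the intended, if unstated, verification. The additional discussion confirming that $\a$ is the generating function of the resulting algebra goes slightly beyond what the paper writes down but is consistent with it.
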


\begin{proof}
Let us give an explicit formula for a twisting function $f$.
The procedure is linear, we associate to every monomial in $\a$
a function in two variables via the following rule:
\begin{equation}
\label{Fakset}
\begin{array}{rcl}
x_ix_jx_k&\longmapsto&x_ix_jy_k+x_iy_jx_k+y_ix_jx_k\\[4pt]
x_ix_j&\longmapsto&x_iy_j,\\[4pt]
x_i& \longmapsto & x_iy_i
\end{array}
\end{equation}
where $i<j<k$.
\end{proof}

%%%%%%%%%%%%%%%%%%%%%%%%%%%%%%%%%
%%%%%%%%%%%%%%%%%%%%%%%%%%%%%%%%%
\section{Proof of the simplicity criterion}\label{ProoSimProp}
%%%%%%%%%%%%%%%%%%%%%%%%%%%%%%%%%
%%%%%%%%%%%%%%%%%%%%%%%%%%%%%%%%%

In this section, we prove Theorem \ref{SimProp}.
We use the notation $\A$ to refer to any of the algebras
$\bbO_n, \bbO_{p,q}$ and $\bbM_n, \bbM_{p,q}$.

%%%%%%%%%%%%%%%%%%%%%%%%%%%%%%%%%
\subsection{The idea of the proof}\label{IdeaSect}
%%%%%%%%%%%%%%%%%%%%%%%%%%%%%%%%%
Our proof of simplicity of a  twisted group algebra $\A$ will
be based on the following lemma.

\begin{lem}\label{ASimple}
If for every homogeneous element $u_x$ in $\A$
there exists an element $u_y$ in $\A$ such that $u_x$ and $u_y$
anticommute, then $\A$ is simple.
\end{lem}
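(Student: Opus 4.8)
The plan is to show that any nonzero two-sided ideal $\I$ of $\A$ must be the whole algebra, by exploiting the homogeneity of the grading together with the anticommutation hypothesis. First I would take a nonzero element $a\in\I$ and write it in the homogeneous basis as $a=\sum_{x}a_x\,u_x$ with $a_x\in\bbK$, not all zero. The key idea is a \emph{purification} step: I want to produce from $a$ a nonzero homogeneous element lying in $\I$, since an ideal that contains one invertible homogeneous element $u_x$ (and each $u_x$ satisfies $u_x^2=\pm1$, hence is invertible) must contain $u_x^{-1}\cdot u_x\cdot u_z=\pm u_z$ for every $z$, and therefore all of $\A$.

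The mechanism for purification is the anticommutation hypothesis. Suppose $a_x\neq0$ and $a_y\neq0$ for two distinct indices $x\neq y$. By the hypothesis there is some $u_w$ anticommuting with $u_x$; more usefully, since $\b$ is bilinear (it is $\d\a$ for the generating function $\a$, by Theorem~\ref{AlphMainTh}), I can try to find a homogeneous $u_w$ that anticommutes with $u_x$ but commutes with $u_y$, i.e.\ $\b(w,x)=1$ while $\b(w,y)=0$. Then the element
$$
u_w\cdot a\cdot u_w^{-1}=\sum_z (-1)^{\b(w,z)}\,a_z\,u_z
$$
has the same support as $a$ but with signs flipped exactly on those $z$ with $\b(w,z)=1$. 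Forming $a\pm u_w\cdot a\cdot u_w^{-1}\in\I$ kills either the $u_x$ term or the $u_y$ term while preserving the other, strictly reducing the size of the support. The main obstacle, and the point requiring care, is guaranteeing that such a separating $w$ exists: I must show that for any two distinct $x,y$ in the support, the linear functionals $\b(\,\cdot\,,x)$ and $\b(\,\cdot\,,y)$ on $(\Z_2)^n$ are distinct, equivalently that $\b(\,\cdot\,,x+y)\not\equiv0$, which is precisely the statement that $u_{x+y}$ anticommutes with \emph{some} element --- exactly the hypothesis of the lemma applied to the nonzero index $x+y$.

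Granting that separation step, I iterate: each application strictly decreases the number of nonzero coefficients, so after finitely many steps I arrive at a nonzero homogeneous element $c\,u_x\in\I$ with $c\neq0$. Since $u_x^2=\pm1$, the element $u_x$ is invertible in $\A$, so $u_x\in\I$, and then $u_x^{-1}\in\I$ forces $u_0=1\in\I$, whence $\I=\A$. I would organize the write-up so that the bilinearity of $\b$ (needed both for the conjugation formula $u_w\cdot u_z\cdot u_w^{-1}=(-1)^{\b(w,z)}u_z$ and for the separation argument) is invoked cleanly; this bilinearity is available because all the algebras $\bbO_n,\bbM_n,\bbO_{p,q},\bbM_{p,q}$ have the symmetric $\phi$ of Section~\ref{DefAlgNash}, so by \eqref{Genalp1} the function $\b=\d\a$ is genuinely bilinear. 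The only genuinely delicate part is the existence of the separating index $w$, and I expect that to reduce, as indicated above, directly to the hypothesis of the lemma rather than to any new computation.
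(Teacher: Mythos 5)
Your overall strategy --- take a nonzero element of the ideal with minimal homogeneous support, use the anticommutation hypothesis to shorten the support, and finish with an invertible homogeneous element --- is exactly the paper's. But the step you yourself flag as the delicate one contains a genuine error: you assert that $\b$ is bilinear because $\b=\d\a$. That is false for the algebras this lemma is applied to. By formula \eqref{PhiBet} one has
$$
\b(w,x)+\b(w,y)=\b(w,x+y)+\phi(w,x,y),
$$
so $\phi$ is precisely the defect of additivity of $\b$ in each argument, and $\phi\not\equiv0$ for $\bbO_n$ and $\bbM_n$ (otherwise they would be associative). Consequently $\b(\,\cdot\,,x)$ is not a linear functional, and your claimed equivalence ``$\b(\,\cdot\,,x)\neq\b(\,\cdot\,,y)$ iff $\b(\,\cdot\,,x+y)\not\equiv0$'' does not follow; the hypothesis of the lemma gives you a $w$ with $\b(w,x+y)=1$, but the extra term $\phi(w,x,y)$ may cancel it. So the existence of a separating $w$ for an \emph{arbitrary} pair $x\neq y$ in the support is not established.

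The paper sidesteps this entirely: after multiplying the minimal-support element by $u_{x_1}$ (invertible) one may assume its support contains $0$, i.e.\ $u=1+\l_2u_{x_2}+\cdots+\l_ku_{x_k}$. Separating $0$ from $x_2$ only requires $\b(w,0)=0$ (automatic) and $\b(w,x_2)=1$, which is literally the hypothesis applied to $u_{x_2}$; the commutator $u\cdot u_w-u_w\cdot u$ then kills the $1$-component, keeps the $x_2$-component, and so is a nonzero element of the ideal with strictly fewer homogeneous components, contradicting minimality. If you insert this normalization before each reduction step (re-normalizing after each pass, since the $0$-component is destroyed), your argument goes through; as written, it has a gap at its crux. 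A minor further remark: your conjugation identity $u_w\cdot u_z\cdot u_w^{-1}=(-1)^{\b(w,z)}u_z$ is correct, but it relies on $\phi$ being alternate (Corollary \ref{TriCol}), not on bilinearity of $\b$.
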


\begin{proof}
Let us suppose that there exists a nonzero proper two-sided ideal $\I$ in $\A$.
Every element $u$ in $\I$ is a linear combination of some homogeneous elements of $\A$. 
We write
$$
u=\l_1\,u_{x_1}+\cdots+\l_k\,u_{x_k}.
$$
Among all the elements of $\I$ we choose an element such that the number
$k$ of homogeneous components is the smallest possible. 
We can assume that $k\geq 2$, otherwise $u$ is homogeneous and therefore
$u^2$ is non-zero and proportional to $1$, so that $\I=\A$. 
In addition,  up to multiplication by $u_{x_1}$ and scalar normalization 
we can assume that 
$$
u=1+\l_2\,u_{x_2}+\cdots+\l_k\,u_{x_k}.
$$
If there exists an element $u_y\in \A$ anticommuting with $u_{x_2}$
then one obtains that $u\cdot{}u_y-u_y\cdot{}u$ is a nonzero element in $\I$ 
with a shorter decomposition into homogeneous components.
This is a contradiction with the choice of $u$.
Therefore, $\A$ has no proper ideal. 
\end{proof}

We now need to study central elements in $\A$, i.e., the elements commuting 
with every element of $\A$.

%%%%%%%%%%%%%%%%%%%%%%%%%%%%%%%%%
\subsection{Central elements}
%%%%%%%%%%%%%%%%%%%%%%%%%%%%%%%%%
In this section we study the \textit{commutative center} $\Zc(\A)$ of $\A$, i.e.,
$$
\Zc(\A)=\{w\in \A| \;w\cdot a=a\cdot w, \;\hbox{for all}\; a\in \A\}.
$$
Note that, in the case where $\A$ admits a generating function,
formula \eqref{PhiBet} implies that the commutative center is contained in the
associative nucleus of~$\A$, so that the commutative center
coincides with the usual notion of center, see \cite{ZSS}, p. 136
for more details.

The unit $1$ of $\A$ is obviously an element of the center.
We say that $\A$ has a trivial center if $\Zc(\A)=\bbK\,1$.

Consider the following particular element
$$
z=(1, \ldots, 1)
$$
in $(\Z_2)^n$, with all the components equal to $1$,
and the associated homogeneous element $u_z$ in $\A$.

\begin{lem}
\label{zcentral}
The element $u_z$ in $\A$ is central if and only if 
\begin{enumerate}
\item
$n=4m$ in the cases $\A=\bbO_n, \bbO_{p,q}$;
\item 
$n=4m+2$ in the cases $\A=\bbM_n, \bbM_{p,q}$.

\end{enumerate}
\end{lem}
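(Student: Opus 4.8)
The plan is to decide when $u_z$ with $z=(1,\ldots,1)$ commutes with every generator $u_i$, and then to promote this to commutation with all of $\A$. Since $\A$ has a generating function $\a$ (namely $\a_\bbO$ or $\a_\bbM$, possibly shifted by a signature, which by Proposition \ref{SignProp} and the remarks in Section \ref{SigSec} does not affect $\b$), the commutation defect is controlled entirely by $\b=\d\a$. By definition (\ref{DeffiC}) and (\ref{Genalp1}),
$$
u_z\cdot u_x=(-1)^{\b(z,x)}\,u_x\cdot u_z,
\qquad
\b(z,x)=\a(z+x)+\a(z)+\a(x).
$$
So $u_z$ is central if and only if $\b(z,x)=0$ for all $x$. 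First I would reduce this to the generators: because $\phi$ is trilinear (Corollary \ref{TriCol}) and hence $\b$ is additive in its second argument modulo $\phi$-corrections via (\ref{PhiBet}), it suffices to understand $\b(z,x)$ as $x$ ranges over a spanning set. In fact the cleanest route is to test $u_z$ against a single generator $u_i=u_{e_i}$ and use the weight description.

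The key computational step uses the $4$-periodic table (\ref{GenFuncTab}) together with the fact that $\a$ depends only on the weight $|x|$. For $x=e_i$ we have $|z+e_i|=n-1$, $|z|=n$, $|e_i|=1$, so
$$
\b(z,e_i)=\a(n-1)+\a(n)+\a(1),
$$
where I abusively write $\a(m)$ for the common value of $\a$ on weight-$m$ elements. Reading the $\a_\bbO$ row, which is $1,1,1,0$ repeating, one finds $\a_\bbO(n-1)+\a_\bbO(n)+\a_\bbO(1)=0$ exactly when $n\equiv 0 \pmod 4$; reading the $\a_\bbM$ row $1,0,0,0$, one finds $\a_\bbM(n-1)+\a_\bbM(n)+\a_\bbM(1)=0$ exactly when $n\equiv 2\pmod 4$. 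This already singles out $n=4m$ for the $\bbO$-series and $n=4m+2$ for the $\bbM$-series as the candidate cases, matching the statement. I would present this as a short table lookup rather than expanding the cubic polynomials.

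It then remains to upgrade ``$u_z$ commutes with every generator'' to ``$u_z$ is central,'' i.e.\ $\b(z,x)=0$ for all $x$, not just for $x=e_i$. Here the main obstacle, and the step requiring care, is that $\b$ is \emph{not} bilinear in general (the algebras are non-associative, and $\b$ is only additive up to the trilinear correction $\phi$ recorded in (\ref{PhiBet})). So I cannot simply say $\b(z,x)=\sum_i x_i\,\b(z,e_i)$. The fix is to observe that $z=(1,\ldots,1)$ is the top element, and to compute $\b(z,x)$ directly from $\b(z,x)=\a(z+x)+\a(z)+\a(x)$: writing $|x|=k$ gives $|z+x|=n-k$, so
$$
\b(z,x)=\a(n-k)+\a(n)+\a(k),
$$
a function of $n$ and $k$ alone. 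One then checks from the $4$-periodicity that, in the regime $n\equiv 0\pmod 4$ (resp.\ $n\equiv 2 \pmod 4$), the quantity $\a_\bbO(n-k)+\a_\bbO(n)+\a_\bbO(k)$ (resp.\ the $\a_\bbM$ analogue) vanishes for every $k$, not merely $k=1$. This is a finite verification over the four residue classes of $k$ modulo $4$, and it simultaneously establishes the converse: if $n$ is not of the stated form, already $k=1$ gives $\b(z,e_i)=1$, so $u_z$ fails to commute with $u_i$. I would organize the proof around the single displayed formula for $\b(z,x)$ and then dispatch both directions by the periodicity table.
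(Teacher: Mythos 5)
Your overall strategy is the paper's: reduce centrality to the vanishing of $\b(z,x)=\a(z+x)+\a(z)+\a(x)$, observe that $\a$ depends only on the weight, write $\b(z,x)=\a(n-k)+\a(n)+\a(k)$ with $k=|x|$, and settle everything by the $4$-periodic table. Your caution about $\b$ not being bilinear is also well placed. But there is a concrete arithmetic error that breaks your ``only if'' direction. You claim that testing against a generator ($k=1$) already singles out the stated residues, i.e.\ that $\a_\bbO(n-1)+\a_\bbO(n)+\a_\bbO(1)=0$ exactly when $n\equiv 0\pmod 4$, and later that ``if $n$ is not of the stated form, already $k=1$ gives $\b(z,e_i)=1$.'' This fails for $n\equiv 1\pmod 4$: there $n-1\equiv 0$, so $\a_\bbO(n-1)=0$, $\a_\bbO(n)=1$, $\a_\bbO(1)=1$, and the sum is $0$. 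The same happens in the $\bbM$-case: $\a_\bbM(n-1)+\a_\bbM(n)+\a_\bbM(1)=0+1+1=0$ when $n\equiv 1\pmod 4$. So for $n\equiv 1\pmod 4$ the element $u_z$ commutes with every generator in both series, yet it is not central --- a direct illustration of the non-bilinearity of $\b$ that you yourself flagged. To detect non-centrality in this residue class you must test against a weight-$2$ element: for $k=2$ one gets $\a_\bbO(n-2)+\a_\bbO(n)+\a_\bbO(2)=1+1+1=1$ and $\a_\bbM(n-2)+\a_\bbM(n)+\a_\bbM(2)=0+1+0=1$. This is exactly what the paper does (it chooses a witness $y$ of weight $|r-2|+1$, which equals $2$ when $n=4m+1$).

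The repair is entirely within your own framework: carry out the ``finite verification over the four residue classes'' of $k$ that you announce for the ``if'' direction also in the ``only if'' direction, exhibiting for each excluded residue of $n$ some weight $k$ (not necessarily $k=1$) with $\a(n-k)+\a(n)+\a(k)=1$. As written, however, the candidate-selection step and the converse both rest on the false $k=1$ claim, so the proof does not go through for $n\equiv 1\pmod 4$.
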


\begin{proof} 
The element $u_z$ in $\A$ is central if and only if for all $y\in(\Z_2)^n$ one has $\b(y,z)=0$.
We use the generating function $\a$. 
Recall that $\b(y,z)=\a(y+z)+\a(y)+\a(z)$. 
The value $\a(x)$ depends only on the weight $|x|$, see Table \eqref{GenFuncTab}.
For every $y$ in  $\Z_2^n$, one has
$$
|z+y|=|z|-|y|.
$$

Case (1).
According to the Table \eqref{GenFuncTab}, one has $\a(x)=0$ if and only if
$|x|$ is a multiple of 4.

Assume $n=4m$. One gets $\a(z)=0$ and for every $y$ one has $\a(y)=0$ if and only if $\a(y+z)=0$. 
So, in that case, one always has  $\a(y)=\a(y+z)$ and therefore $\b(y,z)=0$.

Assume $n=4m+r$, $r=1,2$ or $3$. 
We can always choose an element $y$ such that $|y|=|r-2|+1$. We get
$$
\a(z)=\a(y)=\a(y+z)=1.
$$
Hence, $\b(y,z)=1$. This implies that $u_z$ is not central.

Case (2).
 According to the Table \eqref{GenFuncTab}, one has $\a(x)=0$  if and only if
$|x|$ is not equal to $1\mod4$.
 
 Assume $n=4m+2$. 
 One gets $\a(z)=0$ and for every $y$ one has $|y|=1 \mod 4$  if and only if $|y+z|=1 \mod 4$. 
 So, in that case, one always has  $\a(y)=\a(y+z)$ and therefore $\b(y,z)=0$.
 
Assume $n=4m+r$, $r=0,1$ or $3$. 
We choose the element $y=(1,0,\ldots,0)$, if $r=0,3$, or $y=(1,1,0,\ldots,0)$, if $r=1$. 
We easily compute
$
\b(y,z)=1.
$
This implies that $u_z$ is not central.
\end{proof}

Let us consider the case where $u_z$ is not central.

\begin{lem}
\label{nocentre}
If $u_z$ is not central, then $\A$ has a trivial center. 
\end{lem}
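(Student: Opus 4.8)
The plan is to show that any central element must be a scalar multiple of the unit, given that $u_z$ is not central. Since the center $\Zc(\A)$ is spanned by homogeneous elements (as the grading is by the abelian group $(\Z_2)^n$ and the commutation relations $\b$ respect the grading), it suffices to prove that the only homogeneous element $u_w$ that is central is $u_0=1$. So first I would reduce to the homogeneous case: if $w = \sum \l_i u_{x_i}$ is central, then for each $y$, comparing the homogeneous components of $w\cdot u_y$ and $u_y\cdot w$ forces $\b(x_i,y)=0$ for every $i$ with $\l_i\neq 0$; hence each $u_{x_i}$ appearing in $w$ is itself central.

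Thus the core of the argument is: if $u_w$ is central (equivalently $\b(w,y)=0$ for all $y\in(\Z_2)^n$) then $w=0$. I would argue this by contraposition, showing that for every nonzero $w\in(\Z_2)^n$ there exists $y$ with $\b(w,y)=1$. Using the generating function, $\b(w,y)=\a(w+y)+\a(w)+\a(y)$, and recalling that $\a$ depends only on the weight $|x|$ and is $4$-periodic according to Table (\ref{GenFuncTab}), this becomes a purely combinatorial statement about weights. The hypothesis that $u_z$ is not central supplies, via Lemma \ref{zcentral}, the constraint on $n\bmod 4$ that I expect to be exactly what makes the combinatorial statement true.

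Concretely, for a given nonzero $w$ with weight $|w|=k$ ($1\le k\le n$), I would choose $y$ with small weight (weight $1$ or $2$, possibly with support inside or outside the support of $w$) and compute $|w+y|$ in terms of $k$; since $\a$ is determined by weight mod $4$, I can tabulate $\b(w,y)$ and exhibit a choice of $y$ giving $\b(w,y)=1$. The case analysis splits according to $k\bmod 4$ and according to whether we are in the $\bbO$- or $\bbM$-series, mirroring the computation already carried out in the proof of Lemma \ref{zcentral}; indeed the full-weight vector $z$ is just the extreme case $k=n$, and the hypothesis ``$u_z$ is not central'' rules out the residue of $n$ that would otherwise obstruct the argument. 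The main obstacle will be handling the boundary situations where the support of $w$ is all of $(\Z_2)^n$ (i.e.\ $w=z$ itself) or where adding a weight-one or weight-two $y$ cannot decrease the weight because the support is already full; in those cases I would instead use that $\b(w,z)=\b(z,w)$ and invoke the non-centrality of $u_z$ directly, or pass to the complement $w+z$, whose weight is smaller and for which the generic argument applies. Once every nonzero homogeneous $w$ is shown to be non-central, the reduction above yields $\Zc(\A)=\bbK\,1$, which is the assertion of the lemma.
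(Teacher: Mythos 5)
Your proposal follows the paper's proof essentially verbatim: reduce to homogeneous elements, then for each $x$ with $0<|x|<n$ exhibit a $y$ with $\b(x,y)=1$ by exploiting the fact that $\a$ depends only on $|x|$ and is $4$-periodic, reserving the hypothesis on $u_z$ precisely for the full-support case $x=z$, exactly as the paper does. The one detail to watch when you carry out the tabulation is that for the $\bbO$-series with $|x|\equiv 0\pmod 4$ every weight-$1$ or weight-$2$ element $y$ whose support lies entirely inside or entirely outside the support of $x$ gives $\b(x,y)=0$; the working choice is a weight-$2$ element meeting both the support of $x$ and its complement (the paper instead takes a shifted copy of $x$ of the same weight), so your candidate list must include such mixed-support elements.
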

\begin{proof}
It suffices to prove that for every homogeneous element $u_x$ in $\A$,
that is not proportional to $1$, there exists an element $u_y$ in $\A$, 
such that $u_x$ and $u_y$ anticommute.
Indeed, if $u$ is central, then each homogeneous component of $u$ is central.

Let us fix $x\in (\Z_2)^n$ and the corresponding homogeneous element $u_x\in \A$,
such that $x$ is neither $0$, nor $z$. 
We want to find an element $y\in (\Z_2)^n$ such that $\b(x,y)=1$ or equivalently 
$u_x$ anticommutes with $u_y$.
Using the invariance of the functions $\a$ and $\b$ under permutations of the coordinates, 
we can assume that $x$ is of the form 
$$
x=(1,\ldots,1,0, \ldots, 0),
$$
where first $|x|$ entries are equal to 1 and the last entries are equal to 0.  
We assume $0<|x|<n$, so that, $x$ starts with 1 and ends by 0.

%We use the values of  the function $\a$ given in Table \eqref{GenFuncTab}.

{\bf Case $\A=\bbO_n$ or $\bbO_{p,q}$}.
If $|x|\not=4\ell$, then 
we use exactly the same arguments as in the proof of Lemma \ref{zcentral}
in order to find a suitable $y$
(one can also take one of the elements $y=(1,0,\cdots, 0)$ or $y=(0,\cdots, 0, 1)$). 
Assume $|x|=4\ell$.
Consider the element
$$
y=(0,1, \ldots,1,0,\ldots,0),
$$
with $|y|=|x|$.
One has $\a(x)=\a(y)=0$ and $\a(x+y)=1$. So we also have $\b(x,y)=1$ and deduce $u_x$ anticommutes with $u_y$.

\textbf{Case $\A=\bbM_n$ or $\bbM_{p,q}$}.
Similarly to the proof of Lemma \ref{zcentral}, 
if $k\not=4\ell+2$ then we can find a $y$ such that $u_y$ anticommutes with $u_x$.
If $k=4\ell+2$ then $\a(x)=0$. 
The element $y=(0,\cdots, 0, 1)$ satisfies $\a(y)=1$ and $\a(x+y)=0$.
\end{proof}

Consider now the case where $u_z$ is a central element.
There are two different possibilities: ${u_z}^2=1$, or ${u_z}^2=-1$.

\begin{lem}
\label{ADecomp}
If $u_z\in\A$ is a central element and if ${u_z}^2=1,$
then the algebra splits into a direct sum of two subalgebras:
$$
\A=\A^+\oplus \A^-,
$$
where $\A^+:=\A\cdot(1+u_z)$ and $\A^-:=\A\cdot(1-u_z)$.
\end{lem}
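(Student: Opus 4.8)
The plan is to show that the two claimed subspaces $\A^+$ and $\A^-$ are complementary ideals whose direct sum is all of $\A$. The key structural fact is that $u_z$ is central and satisfies $u_z^2=1$, so the two elements $e^+=\tfrac12(1+u_z)$ and $e^-=\tfrac12(1-u_z)$ behave like orthogonal idempotents. First I would verify the idempotent relations: since $u_z^2=1$, a direct computation gives $(e^\pm)^2=e^\pm$ and $e^+e^-=e^-e^+=\tfrac14(1-u_z^2)=0$, together with $e^++e^-=1$. Because $u_z$ is central, both $e^+$ and $e^-$ are central as well, so left and right multiplication by them agree and there is no ambiguity in the definition $\A^\pm=\A\cdot e^\pm$.

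Next I would establish the direct sum decomposition at the level of vector spaces. For any $a\in\A$ we can write $a=a\cdot 1=a\cdot e^+ + a\cdot e^-$, which shows $\A=\A^++\A^-$. For the sum to be direct, suppose $a\cdot e^+=b\cdot e^-$ lies in the intersection; multiplying on the right by $e^+$ and using $e^+e^+=e^+$, $e^-e^+=0$ gives $a\cdot e^+=0$, so the intersection is trivial. Thus $\A=\A^+\oplus\A^-$ as vector spaces, and each $\A^\pm$ is a two-sided ideal because $e^\pm$ is a central idempotent: for $a\cdot e^+\in\A^+$ and any $c\in\A$, one has $c\cdot(a\cdot e^+)=(c\cdot a)\cdot e^+\in\A^+$, using centrality of $e^+$ to absorb the idempotent on the correct side; the right-multiplication case is symmetric.

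The one point that genuinely requires care, and which I expect to be the main obstacle, is that $\A$ is \emph{not associative}, so the manipulations above cannot freely reassociate products. The rescue is the remark recorded just before the statement: when $\A$ admits a generating function, formula (\ref{PhiBet}) forces the commutative center to lie inside the associative nucleus. Since $u_z$ (and hence $e^\pm$) is central, every product of the form $c\cdot(a\cdot e^\pm)$ or $(c\cdot a)\cdot e^\pm$ associates, because one of the three factors is a nucleus element. I would therefore begin the argument by invoking this fact explicitly, so that all the reassociations used in computing $(e^\pm)^2$, in the orthogonality $e^+e^-=0$, and in checking the ideal property are legitimate. With that in hand, the direct-sum decomposition follows, and each $\A^\pm$ is itself a unital algebra with unit $e^\pm$, so $\A=\A^+\oplus\A^-$ is a decomposition into two subalgebras, as claimed.
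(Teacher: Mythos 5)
Your proposal is correct and follows essentially the same route as the paper: both arguments hinge on the orthogonal idempotents $\tfrac12(1\pm u_z)$ coming from $u_z^2=1$, and both justify the needed reassociations by observing that centrality of $u_z$ together with formula (\ref{PhiBet}) forces $\phi(\cdot,\cdot,z)=0$, i.e.\ $u_z$ lies in the associative nucleus. Your write-up is slightly more explicit about the triviality of the intersection and the two-sided ideal property, but these are the same computations the paper records in \eqref{calcuz} and \eqref{assoz}.
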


\begin{proof}
Using ${u_z}^2=1,$ one immediately obtains
\begin{equation}
\label{calcuz}
\begin{array}{rcl}
(1\pm u_z)^2&=&2\,(1\pm u_z),\\[4pt]
(1+u_z)\cdot(1-u_z)&=&0.
\end{array}
\end{equation}
In addition, using the expression of $\phi$ in terms of $\b$ given in \eqref{PhiBet}
and the fact that $\b(\cdot, z)=0$, one deduces that $\phi(\cdot,\cdot,z)=0$ and thus
$a\cdot(b\cdot{}u_z)=(a\cdot{}b)\cdot{}u_z$ 
for all $a,b\in \A$. 
It follows that 
\begin{equation}
\label{assoz}
\left(a \cdot(1\pm u_z)\right)
\cdot
\left(b \cdot(1\pm u_z)\right)=
(a \cdot b) \cdot\left((1\pm u_z) \cdot(1\pm u_z)\right)
\end{equation}
for all $a,b\in \A$. 
This expression, together with the above computations \eqref{calcuz},
shows that  $\A^+$ and $\A^-$ are, indeed, two subalgebras of $\A$ and that they
satisfy $\A^+\cdot\A^-=\A^-\cdot \A^+=0$.
Moreover, for any $a\in \A$, one can write
$$
a=\textstyle \half\,a \cdot(1+u_z) +\half\,a \cdot(1-u_z).
$$
This implies the direct sum decomposition $\A=\A^+\oplus\A^-$.
\end{proof}

Notice that the elements $\half\,(1+u_z)$ and $\half\,(1-u_z)$
are the units of $\A^+$ and $\A^-$, respectively.

%%%%%%%%%%%%%%%%%%%%%%%%%%%%%%%%%
\subsection{Proof of Theorem \ref{SimProp}, part (i)}
%%%%%%%%%%%%%%%%%%%%%%%%%%%%%%%%%

If $n\not=4m$, then by Lemma \ref{ASimple} and Lemma \ref{nocentre} we immediately deduce
 that $\bbO_n$ is simple.
 
If $n=4m$, then $u_z$ is central and, in the complex case, one has
$u_z^2=1$.
By Lemma \ref{zcentral} and Lemma \ref{ADecomp}, we immediately deduce
 that $\bbO_n$ is not simple and one has
 $$
 \bbO_{4m}=\bbO_{4m}\cdot(1+u_z)\oplus \bbO_{4m}\cdot(1-u_z),
 $$
where $z=(1,\ldots,1)\in (\Z_2)^n$.
It remains to show that the algebras $\bbO_{4m-1}$ 
and $ \bbO_{4m}\cdot(1\pm u_z)$  are isomorphic.
Indeed, using the computations \eqref{calcuz} and \eqref{assoz}, one checks that
 the map
 $$
 u_x\longmapsto  \textstyle \half\,u_{(x,0)}\cdot(1\pm u_z),
 $$
 where $x\in(\Z_2)^{n-1}$, is the required isomorphism.
 
The proof in the case of $\bbM_n$ is completely similar.

 %%%%%%%%%%%%%%%%%%%%%%%%%%%%%%%%%
\subsection{Proof of Theorem \ref{SimProp}, part (ii)}\label{Thmii}
%%%%%%%%%%%%%%%%%%%%%%%%%%%%%%%%%
The algebras $\bbO_{p,q}$ with $p+q\not=4m$
and the algebras $\bbM_{p,q}$ with $p+q\not=4m+2$
are simple because their complexifications are.

If now $u_z$ is central, then the property $u_z^2=1$ or $-1$ becomes crucial.
Using the expressions for $f_{\bbO}$ or $f_{\bbM}$, one computes
\begin{eqnarray*}
f_{\bbO_{p,q}}(z,z)&=&
\sum_{i<j<k}z_iz_jz_k \quad  
+\sum_{i\leq j} z_iz_j
+\sum_{1\leq{}i\leq{}p}z_i \\
&=&\dfrac{n(n-1)(n-2)}{6}+ \dfrac{n(n+1)}{2}+p\\
&=&p,\mod2.
\end{eqnarray*}
And similarly, one obtains
$
f_{\bbM_{p,q}}(z,z)=p.
$
It follows that $u_z^2=(-1)^p$. 

If $p$ is even, then Lemma \ref{zcentral} just applied
guarantees that $\A$ is not simple.

Finally, if $u_z$ is central and $p$ is odd, then $u_z^2=-1$.

\begin{lem}
\label{Moulinette}
If $u_z$ is central and $p$ is odd, then
$$
\bbO_{p,q}\cong\bbO_{p,q-1}\otimes\C,
\qquad
\bbM_{p,q}\cong\bbM_{p,q-1}\otimes\C.
$$
\end{lem}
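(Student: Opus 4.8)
The plan is to turn the central element $u_z$ into a complex unit and to realize $\A$ as the complexification of a copy of itself with one generator removed. Since $p$ is odd, the computation just carried out gives $u_z^2=(-1)^p=-1$, so the real span $\R\,1\oplus\R\,u_z$ is a subalgebra of the center isomorphic to $\C$, with $u_z$ in the role of $\ii$. Moreover, centrality of $u_z$ means $\b(\cdot,z)=0$, and then formula \eqref{PhiBet} forces $\phi(\cdot,\cdot,z)=0$; by symmetry of $\phi$ this vanishing holds with $z$ in any slot, so $u_z$ lies in the associative nucleus and associates with every pair of elements. These two facts, that $u_z^2=-1$ and that $u_z$ is central and nuclear, are the only structural inputs the argument will use.

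First I would single out the subalgebra $\B\subset\A$ generated by $u_1,\ldots,u_{n-1}$. By Lemma \ref{zcentral} centrality of $u_z$ forces $n\equiv0$ (resp. $\equiv2$) modulo $4$, so $n$ is even and $q=n-p\geq1$, whence $u_n^2=-1$. The restriction of $f_\bbO$ (resp. $f_\bbM$) to tuples with vanishing last coordinate is exactly the twisting function on $n-1$ generators together with the same signature term, so $\B\cong\bbO_{p,q-1}$ (resp. $\bbM_{p,q-1}$), with basis $\{u_{(x',0)}:x'\in(\Z_2)^{n-1}\}$ of dimension $2^{n-1}$.

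Next I would propose the map $\Phi:\B\otimes_\R\C\to\A$ determined by $\Phi(a\otimes1)=a$ and $\Phi(a\otimes\ii)=a\cdot u_z$, extended $\R$-linearly. By $\R$-bilinearity of both products it suffices to test multiplicativity on the four cases where the scalar factors are $1$ or $\ii$. The cases $(a\otimes1)(b\otimes\ii)$ and $(a\otimes\ii)(b\otimes1)$ use only that $u_z$ is central and nuclear, giving $a\cdot(b\,u_z)=(a\,u_z)\cdot b=(ab)\,u_z$; the case $(a\otimes\ii)(b\otimes\ii)$ uses the nucleus property to rebracket $(a\,u_z)(b\,u_z)=(ab)\,u_z^2=-ab$, which matches $\ii^2=-1$. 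This single short computation is where care is needed: one must invoke that $u_z$ is nuclear, not merely central, to justify the rebracketing in $(a\,u_z)(b\,u_z)$.

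Finally I would check surjectivity. The image of $\Phi$ is spanned by the $u_{(x',0)}$ together with the products $u_{(x',0)}\cdot u_z=\pm\,u_{(x',0)+z}$; as $x'$ ranges over $(\Z_2)^{n-1}$ the vectors $(x',0)+z$ range over all tuples with last coordinate $1$, so every homogeneous basis element of $\A$ is reached. Since $\dim_\R(\B\otimes_\R\C)=2\cdot2^{n-1}=2^n=\dim_\R\A$, the surjective homomorphism $\Phi$ is an isomorphism, proving $\bbO_{p,q}\cong\bbO_{p,q-1}\otimes_\R\C$ and likewise for $\bbM$. The only genuine obstacle is the homomorphism verification, and it is entirely controlled by the nuclear property of $u_z$; the identification of $\B$ and the final count are routine.
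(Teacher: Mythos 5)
Your proposal is correct and follows essentially the same route as the paper: the paper defines exactly the map $u_x\otimes1\mapsto u_{(x,0)}$, $u_x\otimes\sqrt{-1}\mapsto u_{(x,0)}\cdot u_z$ and justifies it by the identity $f_{\bbO_{p,q}}((x,0),(y,0))=f_{\bbO_{p,q-1}}(x,y)$, relying (as you do) on $u_z^2=-1$ and on the centrality/nuclearity of $u_z$ already established via \eqref{PhiBet} in the proof of Lemma \ref{ADecomp}. You have merely written out the multiplicativity and surjectivity checks that the paper leaves implicit.
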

\begin{proof}
We construct an explicit isomorphism from $\bbO_{p,q-1}\otimes\C$ to $\bbO_{p,q}$ as follows.
\begin{eqnarray*}
u_x\otimes 1&\longmapsto& u_{(x,0)}\\
u_x\otimes \sqrt{-1} &\longmapsto& u_{(x,0)}\cdot u_z\;,
\end{eqnarray*}
for all $x\in (\Z_2)^{n-1}$.
We check that the above map is indeed an isomorphism of algebras by noticing that 
$f_{\bbO_{p,q}}((x,0),(y,0))=f_{\bbO_{p,q-1}}(x,y)$.
\end{proof}

Let us show that Lemma \ref{Moulinette} implies that the
(real) algebras $\bbO_{p,q}$ with $p+q=4m$ and $p$ odd
and the algebras $\bbM_{p,q}$ with $p+q=4m+2$  and $p$ odd are
 simple.
 Indeed, 
 $$
 \bbO_{p,q-1}\otimes_\R\C\cong\bbO_{p+q-1}
 $$
 viewed as a real algebras.
 We then use the following well-known fact.
 A simple unital complex algebra viewed as a real algebra remains simple.

 The proof of Theorem \ref{SimProp} is complete.
 
 Lemma \ref{Moulinette} also implies Corollary \ref{IsoPr}.

%%%%%%%%%%%%%%%%%%%%%%%%%%%%%%%%%
%%%%%%%%%%%%%%%%%%%%%%%%%%%%%%%%%
\section{Hurwitz-Radon square identities}\label{Sqrd}
%%%%%%%%%%%%%%%%%%%%%%%%%%%%%%%%%
%%%%%%%%%%%%%%%%%%%%%%%%%%%%%%%%%

In this section, we use the algebras $\bbO_n$ (and, in the real case, $\bbO_{0,n}$) to
give explicit formul{\ae} for solutions of a classical problem
of products of squares.
Recall, that the octonion algebra is related to the 8-square identity.
In an arbitrary commutative ring,
the product $(a_1^2+\cdots{}+a_8^2)\,(b_1^2+\cdots{}+b_8^2)$
is again a sum of 8 squares $c_1^2+\cdots{}+c_8^2$,
where $c_k$ are explicitly given by bilinear forms in $a_i$ and $b_j$
with coefficients $\pm1$, see, e.g., \cite{ConSmi}.
This identity is equivalent to the fact that $\bbO$ is a composition algebra,
that is, for any $a,b\in\bbO$, the norm of the product is equal to the product of the norms:
\begin{equation}
\label{NormPr}
\cN(a\cdot{}b)=\cN(a)\,\cN(b).
\end{equation}
Hurwitz proved that there is no similar $N$-square identity for $N>8$,
as there is no composition algebra in higher dimensions.

The celebrated Hurwitz-Radon Theorem \cite{Hur,Rad} establishes the maximal
number $r$, as function of $N$, such that there exists an identity
\begin{equation}
\label{Radon}
\left(a_1^2+\cdots{}+a_N^2\right)
\left(b_1^2+\cdots{}+b_r^2\right)=
\left(c_1^2+\cdots{}+c_N^2\right),
\end{equation}
where $c_k$ are bilinear forms in $a_i$ and $b_j$.
The theorem states that $r=\rho(N)$ is the maximal number,
where $\rho(N)$ is the Hurwitz-Radon function defined as follows.
Write $N$ in the form $N=2^{4m+ \ell}\,N'$, where $N'$ is odd and $\ell=0,1,2$ or $3$, then
$$
\rho(N):=8m+2^\ell.
$$
It was proved by Gabel \cite{Gab} that the bilinear forms $c_k$ can be chosen with
coefficients~$\pm1$.
Note that the only interesting case is $N=2^n$ since the general case is an immediate
corollary of this particular one.
We refer to \cite{Squa,Sha} for the history,
results and references.

In this section, we give explicit formul{\ae} for the solution to the Hurwitz-Radon equation,
see also \cite{LMO} for further development within this framework.

%%%%%%%%%%%%%%%%%%%%%%%%%%%%%%%%%
\subsection{The explicit solution}\label{Expl}
%%%%%%%%%%%%%%%%%%%%%%%%%%%%%%%%%

We give explicit solution for Hurwitz-Radon equation \eqref{Radon}
for any $N=2^n$ with $n$ not a multiple of 4.

We label the $a$-variables and the $c$-variables by elements of $(\Z_2)^n$.
In order to describe the labeling of the $b$-variables,
we consider the following particular elements of $(\Z_2)^n$:
 \begin{equation*}
\begin{array}{rcl}
e_0&:=& (0,0,\ldots,0),\\[4pt]
\overline{e_0}&:=& (1,1,\ldots,1),\\[4pt]
e_i&:=&(0,\ldots,0,1,0,\ldots,0), \text{ where 1 occurs at the $i$-th position,}\\[4pt]
\overline{e_i}&:=&(1,\ldots,1,0,1,\ldots,1), \text{ where 0 occurs at the $i$-th position,}
\end{array}
\end{equation*}
 for all $1\leq i\leq n$ and $1<j\leq n$.
We then introduce the following subset $H_n$ of $(\Z_2)^n$:
\begin{equation}\label{defHn}
\begin{array}{rcl}
H_n&=& \{ e_i, \overline{e_i}, \; 1\leq i\leq n\}, \text{ for } n=1 \mod 4,\\[6pt]
H_n&=& \{ e_i, e_1+e_j,\; 0\leq i\leq n,\; 1<j\leq n\}, \text{ for } n=2 \mod 4,\\[6pt]
H_n&=& \{ e_i, \overline{e_i},\; 0\leq i\leq n\}, \text{ for } n=3 \mod 4.\\[4pt]
\end{array}
\end{equation}
In each case, the subset $H_n$ contains exactly $\r(2^n)$ elements. 

We write the Hurwitz-Radon identity in the form
$$
\Big( \sum_{x\in (\Z_2)^n} a_x^2\;\Big)\Big( \sum_{x\in H_n} b_x^2\;\Big)
= \sum_{x\in (\Z_2)^n} c_x^2.
$$
We will establish the following.

\begin{thm}
\label{Solcx}
The bilinear forms
\begin{equation}
\label{ExplSolRad}
c_x=\sum_{y\in H_n}(-1)^{f_{\bbO}(x+y,y)}\,a_{x+y}b_y,
\end{equation}
where $f_{\bbO}$ is the twisting function of the algebra $\bbO_n$ defined in  \eqref{NashProd},
are a solution to the Hurwitz-Radon identity.
\end{thm}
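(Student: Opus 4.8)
The plan is to read the forms $c_x$ as the structure constants of a single product in $\bbO_n$. Put $u=\sum_{x\in(\Z_2)^n}a_x\,u_x$ and $v=\sum_{y\in H_n}b_y\,u_y$. Since $u_x\cdot u_y=(-1)^{f_\bbO(x,y)}u_{x+y}$, the coefficient of $u_x$ in $u\cdot v$ is
$$
\sum_{y\in H_n}(-1)^{f_\bbO(x+y,\,y)}\,a_{x+y}\,b_y=c_x.
$$
Let $\cN$ be the Euclidean norm for which the $u_x$ form an orthonormal basis, so that $\cN(u)=\sum_x a_x^2$, $\cN(v)=\sum_{y\in H_n}b_y^2$ and $\cN(u\cdot v)=\sum_x c_x^2$. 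The asserted square identity is then exactly the statement $\cN(u\cdot v)=\cN(u)\,\cN(v)$ for every $u\in\bbO_n$ and every $v$ supported on $H_n$, and this is what I would prove.

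First I would expand $\sum_x c_x^2$ and split it into the diagonal part (terms with $y=y'$) and the cross terms. The diagonal part is $\sum_x\sum_{y\in H_n}a_{x+y}^2\,b_y^2=\cN(u)\,\cN(v)$, so it remains to show that the cross terms vanish. Grouping them by unordered pairs $\{y,y'\}\subset H_n$ and writing $e:=y+y'\neq0$, the contribution of a pair is (up to a constant) the sum
$$
\sum_{s\in(\Z_2)^n}a_s\,a_{s+e}\,(-1)^{f_\bbO(s,y)+f_\bbO(s+e,\,y')}.
$$
The summands at $s$ and at $s+e$ share the coefficient $a_s a_{s+e}$, so the whole sum vanishes for all $a$ precisely when these two terms carry opposite signs, i.e.\ when
$$
f_\bbO(s,y)+f_\bbO(s,y')+f_\bbO(s+e,y)+f_\bbO(s+e,y')\equiv1\pmod2.
$$
Using trilinearity and the alternating property of $\phi=\d f_\bbO$ (Corollary \ref{TriCol}) together with $f_\bbO(x,x)=\a_\bbO(x)$ (cf.\ Proposition \ref{XXProp}), I expect the left-hand side to collapse, independently of $s$, to $\a_\bbO(y+y')$. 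Hence the cross terms of the pair $\{y,y'\}$ cancel if and only if $\a_\bbO(y+y')=1$, which by the table \eqref{GenFuncTab} means $|y+y'|\not\equiv0\pmod4$.

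It then remains to verify this purely combinatorial condition: $|y+y'|\not\equiv0\pmod4$ for all distinct $y,y'\in H_n$. This is a short case check based on the description \eqref{defHn}. For $n\equiv1$ the pairwise sums are of the types $e_i+e_j$, $\overline{e_i}+\overline{e_j}$, $e_i+\overline{e_i}=\overline{e_0}$ and $e_i+\overline{e_j}$ ($i\neq j$), of weights $2$, $2$, $n$ and $n-2$, hence $\equiv2,2,1,3\pmod4$. For $n\equiv3$ one adds the sums involving $e_0$ and $\overline{e_0}$ and finds all weights in $\{1,2,n-2,n-1,n\}\equiv\{1,2,1,2,3\}\pmod4$. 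For $n\equiv2$, where $H_n$ consists only of the $e_i$ and the $e_1+e_j$, every pairwise sum has weight $1$, $2$ or $3$. In each case no weight is a multiple of $4$, and since $H_n$ has exactly $\rho(2^n)$ elements the identity is obtained with the maximal number of $b$-variables.

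The \emph{main obstacle} is the sign computation of the middle step, reducing $f_\bbO(s,y)+f_\bbO(s,y')+f_\bbO(s+e,y)+f_\bbO(s+e,y')$ to $\a_\bbO(y+y')$. I would carry this out by expressing each combination $f_\bbO(s,\cdot)+f_\bbO(s+e,\cdot)$ through $\phi(e,s,\cdot)$, using $\phi(e,s,y)+\phi(e,s,y')=\phi(e,s,e)=0$ (alternation) to remove the $s$-dependence, and then evaluating the surviving term $f_\bbO(e,t)+f_\bbO(e,t+e)=\a_\bbO(e)$ via $\phi(e,e,t)=0$ and $f_\bbO(e,e)=\a_\bbO(e)$. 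This bookkeeping is the only delicate point; the reduction to the norm identity and the final weight count are routine.
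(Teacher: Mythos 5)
Your proposal is correct and follows essentially the same route as the paper: the paper's Proposition \ref{EuclProp} is precisely your criterion that the cross terms of $\cN(u\cdot v)$ cancel if and only if $\a_{\bbO}(y+y')=1$ on the supports, and the paper then applies it to the set $H_n$. The only (harmless) differences are that you derive the sign identity via trilinearity and alternation of $\phi$ rather than from the explicit polynomial form of $f_{\bbO}$, and that you spell out the mod-$4$ weight check on pairwise sums in $H_n$, which the paper asserts without detail.
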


In order to prove Theorem \ref{Solcx} we will need to define
the natural norm on $\bbO_n$.

%%%%%%%%%%%%%%%%%%%%%%%%%%%%%%%%%
\subsection{The Euclidean norm}\label{Norm}
%%%%%%%%%%%%%%%%%%%%%%%%%%%%%%%%%

Assume that a twisted group algebra $\A=(\bbK[(\Z_2)^n],f)$
is equipped with a generating function $\a$.
Assume furthermore that the twisting function satisfies $f(x,x)=\a(x)$, as in \eqref{XXAlpha}.

The involution on $\A$ is defined for every
$a=\sum_{x\in(\Z_2)^n}a_x\,u_x$, where $a_x\in\C$ (or in~$\R$)
are scalar coefficients and $u_x$ are the basis elements,
by the formula
$$
\bar{a}=\sum_{x\in(\Z_2)^n}(-1)^{\a(x)}\,a_x\,u_x.
$$
We then define the following norm of an element $a\in\A$:
$$
\cN(a):=\left(a\cdot{}\bar{a}\right)_0.
$$

\begin{prop}
\label{ObvPr}
The above norm is nothing but the
Euclidean norm in the standard basis:
\begin{equation}
\label{EuclEq}
\cN(a)=\sum_{x\in(\Z_2)^n}a_x^2.
\end{equation}
\end{prop}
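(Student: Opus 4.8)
The plan is to compute the scalar $(a\cdot\bar{a})_0$ by direct expansion and then read off the degree-$0$ component. Writing $a=\sum_{x}a_x\,u_x$ and, by definition of the involution, $\bar{a}=\sum_{y}(-1)^{\a(y)}\,a_y\,u_y$, and using the twisted product $u_x\cdot{}u_y=(-1)^{f(x,y)}u_{x+y}$, I would obtain
\begin{equation*}
a\cdot\bar{a}=\sum_{x,y\in(\Z_2)^n}(-1)^{\a(y)+f(x,y)}\,a_x\,a_y\,u_{x+y}.
\end{equation*}

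The key observation is that the homogeneous component of degree $0=(0,\ldots,0)$, i.e.\ the coefficient of the unit $u_0=1$, receives contributions only from those pairs $(x,y)$ with $x+y=0$. Since we work over $(\Z_2)^n$, this forces $y=x$. Hence
\begin{equation*}
\cN(a)=(a\cdot\bar{a})_0=\sum_{x\in(\Z_2)^n}(-1)^{\a(x)+f(x,x)}\,a_x^2.
\end{equation*}

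Finally, I would invoke the standing hypothesis $f(x,x)=\a(x)$ from \eqref{XXAlpha}: this gives $(-1)^{\a(x)+f(x,x)}=(-1)^{2\a(x)}=1$ for every $x$, so all signs disappear and $\cN(a)=\sum_x a_x^2$, which is exactly \eqref{EuclEq}. The computation is entirely routine; the only point that uses the hypotheses is the sign cancellation, which hinges precisely on the compatibility $f(x,x)=\a(x)$ between the twisting function and the generating function. There is no genuine obstacle here --- the substance of the statement is already encoded in the normalization \eqref{XXAlpha}, whose existence is guaranteed by Proposition \ref{XXProp}.
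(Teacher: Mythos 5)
Your computation is correct and is essentially the paper's own proof: the paper likewise notes that only the diagonal terms $y=x$ contribute to the degree-$0$ component, yielding $\sum_x(-1)^{\a(x)+f(x,x)}a_x^2$, and then cancels the signs via the normalization $f(x,x)=\a(x)$. No difference in substance.
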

\begin{proof}
One has: 
$$
\cN(a)=\sum{}(-1)^{\a(x)}\,a_x^2\,u_x\cdot{}u_x
=\sum{}(-1)^{\a(x)+f(x,x)}\,a_x^2.
$$
The result then follows from the assumption $f(x,x)=\a(x)$.
\end{proof}

The following statement is a general criterion for $a,b\in\A$ to satisfy the
composition equation (\ref{NormPr}).
This criterion will be crucial for us to establish the square identities.

\begin{prop}
\label{EuclProp}
Elements $a,b\in\A$ satisfy (\ref{NormPr}), if and only if
for all $x,y,z,t\in(\Z_2)^2$ such that
$$
x+y+z+t=0,
\qquad
(x,y)\not=(z,t),
\qquad
a_x\,b_y\,a_z\,b_t\not=0,
$$
one has $\a(x+z)=\a(y+t)=1$.
\end{prop}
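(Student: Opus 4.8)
The plan is to compute both sides of the composition equation $\cN(a\cdot b)=\cN(a)\,\cN(b)$ by expanding everything in the standard basis and comparing coefficients. The key observation is that, by Proposition \ref{ObvPr}, the right-hand side is simply
$$
\cN(a)\,\cN(b)=\Big(\sum_{x}a_x^2\Big)\Big(\sum_{y}b_y^2\Big)=\sum_{x,y}a_x^2\,b_y^2,
$$
so the whole problem reduces to understanding the left-hand side.

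First I would compute $\cN(a\cdot b)=\big((a\cdot b)\cdot\overline{(a\cdot b)}\big)_0$, the coefficient of $u_0$ in $(a\cdot b)\cdot\overline{a\cdot b}$. Writing $a=\sum_x a_x u_x$ and $b=\sum_y b_y u_y$, the product $a\cdot b=\sum_{x,y}(-1)^{f(x,y)}a_x b_y\,u_{x+y}$. The conjugate of $u_{x+y}$ contributes a sign $(-1)^{\a(x+y)}$, and the coefficient of $u_0$ in a further product survives exactly when the total degree sums to zero. Collecting all four indices, the $u_0$-component of $\cN(a\cdot b)$ is a sum over quadruples $(x,y,z,t)$ with $x+y+z+t=0$ of terms $\pm\,a_x b_y a_z b_t$, where the sign is a combination of $f$-values and $\a$-values. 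The diagonal part $(x,y)=(z,t)$ reproduces precisely $\sum_{x,y}a_x^2 b_y^2$, matching $\cN(a)\cN(b)$; this uses $f(x,x)=\a(x)$ exactly as in Proposition \ref{ObvPr}.

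Therefore the composition equation holds if and only if the off-diagonal contributions cancel. Grouping each quadruple $(x,y,z,t)$ with its ``swapped'' partner, the cancellation of the two terms $a_x b_y a_z b_t$ and $a_z b_t a_x b_y$ is governed by the relative sign, which I expect to simplify to $(-1)^{\b(x+z,\,y+t)}$ or an equivalent expression built from $\b$ and $\phi$. Using the identities \eqref{bef}, \eqref{phef}, and the polarization formulae \eqref{Genalp1}, \eqref{Genalp2}, together with $\phi$ being symmetric and trilinear (Corollary \ref{TriCol}), this sign should collapse to $(-1)^{\a(x+z)+\a(y+t)+\text{(something that vanishes)}}$. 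The pairs then cancel precisely when $\a(x+z)=\a(y+t)=1$, which is the stated condition.

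\textbf{The main obstacle} will be the bookkeeping of signs in the four-fold product: one must carefully track how the non-associativity factor $\phi$ enters when regrouping $(a\cdot b)\cdot\overline{(a\cdot b)}$, since the algebra is neither associative nor alternative in general, so the placement of parentheses matters. The essential simplification is that I only need the $u_0$-component, and any term contributing to degree $0$ has its four labels summing to zero, which constrains the $\phi$- and $\b$-values enough to reduce the sign to a function of $x+z$ and $y+t$ alone. Verifying that this reduction is clean—i.e., that the cross terms from non-associativity do not spoil the pairing—is the delicate point, and I would handle it by expressing all signs through the generating function $\a$ and invoking the symmetry and alternation of $\phi$ to eliminate the unwanted contributions.
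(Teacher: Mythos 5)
Your overall skeleton is the same as the paper's: use Proposition \ref{ObvPr} to write both sides as sums over quadruples with $x+y+z+t=0$, match the diagonal $(x,y)=(z,t)$ with $\cN(a)\cN(b)$, and require the off-diagonal quadruples to cancel in pairs. However, the ``main obstacle'' you single out is not actually there: since $\cN$ is the Euclidean norm of the \emph{element}, one has $\cN(a\cdot b)=\sum_w c_w^2$ where $c_w=\sum_{x+y=w}(-1)^{f(x,y)}a_xb_y$ are the coordinates of $a\cdot b$, so you never expand $(a\cdot b)\cdot\overline{(a\cdot b)}$ as a four-fold product and no regrouping or $\phi$-bookkeeping enters. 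This immediately gives
$$
\cN(a\cdot b)=\sum_{x+y+z+t=0}(-1)^{f(x,y)+f(z,t)}\,a_x\,b_y\,a_z\,b_t .
$$

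The genuine gap is in the step you defer. The relative sign between the paired terms $(x,y,z,t)$ and $(x,t,z,y)$ has exponent $f(x,y)+f(z,t)+f(x,t)+f(z,y)$, and both closed forms you propose for it fail: because $x+y+z+t=0$ forces $x+z=y+t$, the expressions $(-1)^{\b(x+z,\,y+t)}$ and $(-1)^{\a(x+z)+\a(y+t)}$ are identically $+1$, so neither could ever produce a cancellation criterion. Moreover, the reduction does not come from the symmetry, trilinearity or alternation of $\phi$; it comes from the specific construction \eqref{Fakset} of $f$ from $\a$, in which every monomial is linear in the second argument. Substituting $t=x+y+z$ and using this linearity gives $f(x,t)=f(x,x)+f(x,y)+f(x,z)$ and $f(z,t)=f(z,x)+f(z,y)+f(z,z)$, hence
$$
f(x,y)+f(z,t)+f(x,t)+f(z,y)=f(x,x)+f(z,z)+f(x,z)+f(z,x)=\a(x)+\a(z)+\b(x,z)=\a(x+z),
$$
by \eqref{XXAlpha} and \eqref{Genalp1}. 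The pair then cancels exactly when $\a(x+z)=1$, which (as $x+z=y+t$) is the stated condition. Without isolating this linearity property of $f$, the sign computation you leave open would not close.
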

\begin{proof}
Calculating the left-hand-side of (\ref{NormPr}), we obtain
$$
\cN(a\cdot{}b)=
\sum_{x+y+z+t=0}(-1)^{f(x,y)+f(z,t)}\,a_x\,b_y\,a_z\,b_t
$$
According to (\ref{EuclEq}), the product of the norm in the right-hand-side is: 
$$
\cN(a)\,\cN(b)=\sum_{x,y}\,a_x^2\,b_y^2.
$$
It follows that the condition (\ref{NormPr}) is satisfied if and only if
$$
f(x,y)+f(z,t)+f(x,t)+f(z,y)=1,
$$
whenever $(x,y)\not=(z,t)$ and $a_x\,b_y\,a_z\,b_t\not=0$.

Taking into account the linearity of the function (\ref{Fakset})
and substituting $t=x+y+z$, one finally gets (after cancellation):
$$
f(z,x)+f(x,z)+f(x,x)+f(z,z)=1.
$$
In terms of the function $\a$ this is exactly the condition $\a(x+z)=1$.
Hence the result.
\end{proof}

%%%%%%%%%%%%%%%%%%%%%%%%%%%%%%%%%
\subsection{Proof of Theorem \ref{Solcx}}\label{Choose}
%%%%%%%%%%%%%%%%%%%%%%%%%%%%%%%%%

Let us apply Proposition \ref{EuclProp} to the case of the algebra $\bbO_n$.

Given the variables $(a_x)_{x\in (\Z_2)^n}$ and $(b_x)_{x\in H_n}$,
where $H_n$ is the subset defined in \eqref{defHn}, form the following vectors in $\bbO_n$,
$$
a=\sum_{x\in (\Z_2)^n} a_x\,u_x, 
\qquad  b=\sum_{y\in H_n} b_y\,u_y.
$$
Taking two distinct elements $y,t\in H_n$ one always has $\a_{\bbO}(y+t)=1$. 
Therefore, from Proposition \ref{EuclProp} one deduces that 
$\cN(a)\cN(b)=\cN(a\cdot{}b).$
Writing this equality in terms of coordinates of the three elements
$a, b$ and $c=a\cdot{}b$, one obtains the result.

Theorem \ref{Solcx} is proved.

\medskip

Let us give one more classical identity that can be realized in the algebra $\bbO_n$.

\begin{ex}
The most obvious choice of two elements $a,b\in \bbO_n$ 
that satisfy the condition~(\ref{NormPr})
is: $a=a_0\,u_0+\sum{}a_i\,u_i$ and $b=b_0\,u_0+\sum{}b_i\,u_i$.
One immediately obtains in this case the following elementary but elegant identity:
$$
(a_0^2+\cdots+a_n^2)\,(b_0^2+\cdots+b_n^2)=
(a_0\,b_0+\cdots+a_n\,b_n)^2+
\sum_{0\leq{}i<j\leq{}n}(a_i\,b_j-b_j\,a_i)^2,
$$
for an arbitrary $n$, known as the Lagrange identity.
\end{ex}

%%%%%%%%%%%%%%%%%%%%%%%%%%%%%%%%%
%%%%%%%%%%%%%%%%%%%%%%%%%%%%%%%%%
\section{Relation to code loops}\label{LaSec}
%%%%%%%%%%%%%%%%%%%%%%%%%%%%%%%%%
%%%%%%%%%%%%%%%%%%%%%%%%%%%%%%%%%

The constructions of the algebras that we use in this work
are closely related to some constructions in the theory of Moufang Loops. 
In particular, they lead to examples of Code Loops \cite{Gri}. 
In this section, we apply our approach in order to obtain an explicit construction of the famous Parker Loop. 

\bigskip

%%%%%%%%%%%%%%%%%%%%%%%%%%%%%%%%%
\paragraph{\bf The loop of the basis elements.}
%%%%%%%%%%%%%%%%%%%%%%%%%%%%%%%%%
The structure of loop is a nonassociative version of a group
(see, e.g., \cite{Goo}).

\begin{prop}
\label{MouPr}
The basis elements together with their opposites, $\{\pm u_x, x\in (\Z_2)^n\}$,
in a twisted algebra $(\tA,f)$, form a loop with respect to the multiplication rule.
Moreover, this loop is a Moufang loop whenever $\phi=\d{}f$ is symmetric.
\end{prop}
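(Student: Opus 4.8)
The plan is to treat the two assertions separately, first verifying the loop axioms and then establishing one Moufang identity.

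\textbf{Loop structure.} The set $L=\{\pm u_x : x\in(\Z_2)^n\}$ is closed under the product \eqref{TwistProd}, since $(\e_1 u_x)\cdot(\e_2 u_y)=\e_1\e_2(-1)^{f(x,y)}u_{x+y}\in L$ for signs $\e_1,\e_2\in\{\pm1\}$. Because $f(0,\cdot)=f(\cdot,0)=0$, the element $u_0=1$ is a two-sided identity. For the division axioms, given $a=\e_1 u_x$ and $b=\e_2 u_y$ I would solve $a\cdot w=b$ by writing $w=\e u_z$: the grading forces $z=x+y$ (uniquely, in $(\Z_2)^n$) and then the sign $\e$ is uniquely determined; the equation $w\cdot a=b$ is handled identically. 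Hence left and right division each have a unique solution, so $L$ is a loop.

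\textbf{Reduction of the Moufang identity.} I will verify the left Moufang identity $a\cdot(b\cdot(a\cdot c))=((a\cdot b)\cdot a)\cdot c$. Since $-1=-u_0$ is a central scalar that associates with everything, it suffices to check this on basis elements $a=u_x$, $b=u_y$, $c=u_z$. Starting from $u_x\cdot(u_y\cdot(u_x\cdot u_z))$, I would reassociate in three steps using \eqref{Deffi}: first inside $u_y\cdot(u_x\cdot u_z)$, then across the outer bracket (treating $u_y u_x$ as a homogeneous element of degree $x+y$), and finally inside $u_x\cdot(u_y\cdot u_x)$. No commutations are ever needed, so only $\phi$ appears, each step contributing one factor $(-1)^{\phi(\cdot,\cdot,\cdot)}$. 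This transforms the left side into $((u_x u_y)u_x)u_z$ multiplied by the sign $(-1)^{E}$, where
\[
E=\phi(y,x,z)+\phi(x,x+y,z)+\phi(x,y,x).
\]
Thus the Moufang identity holds on $L$ if and only if $E=0$ in $\Z_2$.

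\textbf{Conclusion.} Assume $\phi=\d f$ is symmetric. By Corollary~\ref{TriCol}, $\phi$ is then trilinear and alternate. The alternate property \eqref{AlT} gives $\phi(x,y,x)=0$; trilinearity together with \eqref{AlT} gives $\phi(x,x+y,z)=\phi(x,x,z)+\phi(x,y,z)=\phi(x,y,z)$; and symmetry gives $\phi(y,x,z)=\phi(x,y,z)$. Hence $E=\phi(x,y,z)+\phi(x,y,z)+0=0$, and the Moufang identity holds on all of $L$.

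The calculations are routine once organized in this way; the one point requiring care is the reduction to basis elements, where one must observe that $-1$ lies in both the center and the associative nucleus of $L$, so that the identity for arbitrary $\pm u_x$ collapses to the identity for the $u_x$. I expect the only place an error could slip in to be the bookkeeping of the three associator factors, in particular the appearance of the middle degree $x+y$ when applying \eqref{Deffi} to the product $u_y u_x$.
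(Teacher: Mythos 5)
Your proposal is correct and follows essentially the same route as the paper: the paper also dismisses the loop axioms as evident and derives the Moufang identity from the fact that a symmetric $\phi$ is trilinear and alternate (Corollary~\ref{TriCol}). You simply supply the associator bookkeeping
$E=\phi(y,x,z)+\phi(x,x+y,z)+\phi(x,y,x)$ that the paper leaves implicit, and your computation checks out.
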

\begin{proof}
The fact that the elements $\pm u_x$ form a loop is evident.
If the function $\phi=\d f$ is symmetric,
then this loop satisfies the Moufang identity:
$$
u\cdot (v \cdot (u \cdot w))= ((u\cdot v) \cdot u) \cdot w
$$
for all $u,v,w$.
Indeed, the symmetry of $\phi$ implies that $\phi $ is also trilinear and alternate,
see Corollary \ref{TriCol}. 
\end{proof}

Let us mention that the Moufang loops associated with the octonions and
split-octonions are important classical objects invented by Coxeter \cite{Cox}.

\bigskip

%%%%%%%%%%%%%%%%%%%%%%%%%%%%%%%%%
\paragraph{\bf Code loops.}
%%%%%%%%%%%%%%%%%%%%%%%%%%%%%%%%%
The notion of code loops has been introduced by Griess, \cite{Gri}. We recall the construction and main results. A doubly even binary code is a subspace $V$ in $(\Z_2)^n$ such that any vectors in $V$ has weight a multiple of 4. It was shown that there exists a function~ $f$ from $V\times V $ to $\Z_2$, called a \textit{factor set} in \cite{Gri}, satisfying

\begin{enumerate}
\item $f(x,x)=\textstyle\frac{1}{4}|x|$,

\item $f(x,y)+f(y,x)=\half |x\cap y|$,

\item $\d f(x,y,z) = |x\cap y\cap z|$,
\end{enumerate}
where $|x\cap y|$ (resp. $|x\cap y\cap z|$) is the number of nonzero coordinates  in both $x$ and $y$ (resp. all of $x, y, z$).
The associated code loop $\Lc (V)$ is the set $\{\pm u_x, x\in V\}$ together with the multiplication law
$$
u_x\cdot u_y=(-1)^{f(x,y)}\,u_{x+y}.
$$
The most important example of code loop, is the Parker loop that plays an important r\^ole
in the theory of sporadic finite simple groups. 
The Parker loop is the code loop obtained from the Golay code. 
This code  can be described as the 12-dimensional subspace of $(\Z_2)^{24}$ 
given as the span of the rows of the following matrix, see \cite{ConSlo},

\begin{tiny}
\begin{equation*}
G=\left(
\begin{array}{cccccccccccccccccccccccc}
1&&&&&&&&&&&&1&0&1&0&0&0&1&1&1&0&1&1\\
&1&&&&&&&&&&& 1&1&0&1&0&0&0&1&1&1&0&1\\
&&1&&&&&&&&&& 0&1&1&0&1&0&0&0&1&1&1&1\\
&&&1&&&&&&&&& 1&0&1&1&0&1&0&0&0&1&1&1\\
&&&&1&&&&&&&& 1&1&0&1&1&0&1&0&0&0&1&1\\
&&&&&1&&&&&&& 1&1&1&0&1&1&0&1&0&0&0&1\\
&&&&&&1&&&&&& 0&1&1&1&0&1&1&0&1&0&0&1\\
&&&&&&&1&&&&& 0&0&1&1&1&0&1&1&0&1&0&1\\
&&&&&&&&1&&&& 0&0&0&1&1&1&0&1&1&0&1&1\\
&&&&&&&&&1&&& 1&0&0&0&1&1&1&0&1&1&0&1\\
&&&&&&&&&&1&& 0&1&0&0&0&1&1&1&0&1&1&1\\
&&&&&&&&&&&1& 1&1&1&1&1&1&1&1&1&1&1&0\\
\end{array}
\right)
\end{equation*}
\end{tiny}

\bigskip

%%%%%%%%%%%%%%%%%%%%%%%%%%%%%%%%%
\paragraph{\bf An explicit formula for the Parker loop.}
%%%%%%%%%%%%%%%%%%%%%%%%%%%%%%%%%

Let us now give the generating function of the Parker loop.
We identify the Golay code with the space $(\Z_2)^{12}$, in such a way
that the $i$-th row of the matrix $G$, denoted $\ell_i$, is identified with the
$i$-th standard basic vector
$e_i=(0, \ldots, 0, 1, 0 \ldots,0)$ of $(\Z_2)^{12}$.
As previously, we write $u_i=u_{e_i}=u_{\ell_i}$ the corresponding vector in the Parker loop.
The coordinates of an element $x\in(\Z_2)^{12}$ are denoted by
$(x_1,\ldots,x_{11},x_{12})$. 

\begin{prop}
The Parker loop is given by the following generating function $\a$ from 
$(\Z_2)^{12}$ to $\Z_2$.
\begin{equation}
\label{PLGF}
\begin{array}{rcl}
\a_G(x)&=&\displaystyle
\sum_{1\leq i \leq 11 }x_ix_{i+1}\left(x_{i+5}+x_{i+8}+x_{i+9}\right)
+x_ix_{i+2}\,(x_{i+6}+x_{i+8})\\[12pt]
&&\displaystyle
\; +\; x_{12}\,\Big(\sum_{1\leq i \leq 11}x_i +\sum_{1\leq i <j\leq 11}x_ix_j\Big),
\end{array}
\end{equation}
where the indices of $x_{i+k}$ are understood modulo 11.
\end{prop}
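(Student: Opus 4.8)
The plan is to reduce the statement to a combinatorial computation of weights and intersections of the rows of $G$. Recall that the factor set of a code loop satisfies $f(x,x)=\tfrac14|x|$, $f(x,y)+f(y,x)=\half|x\cap y|$ and $\d f(x,y,z)=|x\cap y\cap z|$, where $|\cdot|$ is the weight of the associated codeword in $(\Z_2)^{24}$. In the language of this paper the first condition is precisely $f(x,x)=\a(x)$ in the sense of \eqref{XXAlpha}, while the second and third read $\b=\half|x\cap y|$ and $\phi=\d f=|x\cap y\cap z|$. Combining $f(x,x)=\a(x)$ with condition~(1) therefore singles out the natural candidate for the generating function,
$$
\a(x)=\tfrac14\,|x|\pmod 2,
$$
the quarter-weight of the codeword attached to $x$. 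The two things to establish are then: that this $\a$ is genuinely a generating function reproducing the Parker-loop structure, and that it agrees with the explicit polynomial \eqref{PLGF} (up to the linear ambiguity allowed by Lemma~\ref{unipro}).

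\textbf{Verifying that $\frac14|\cdot|$ is a generating function.} Using $|x+y|=|x|+|y|-2|x\cap y|$ one computes, from \eqref{Genalp1}, that $\d\a(x,y)=\tfrac14\bigl(|x+y|+|x|+|y|\bigr)=\tfrac{|x|}2+\tfrac{|y|}2-\tfrac{|x\cap y|}2$; since $V$ is doubly even, $|x|$ and $|y|$ are multiples of $4$, so the first two terms vanish modulo~$2$ and $\d\a=\half|x\cap y|=\b$. For the ternary part I would substitute the inclusion--exclusion formula $|x+y+z|=|x|+|y|+|z|-2(|x\cap y|+|x\cap z|+|y\cap z|)+4|x\cap y\cap z|$ into \eqref{Genalp2}: the weights (multiples of $4$) and the pairwise intersections (which are even, again by double-evenness) all drop out modulo~$2$, leaving $\d_2\a=|x\cap y\cap z|=\phi$. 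This shows $\a=\tfrac14|\cdot|$ generates exactly the quasialgebra structure of the Parker loop.

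\textbf{Expanding the quarter-weight explicitly.} The heart of the proof is to turn $\tfrac14|x|\bmod 2$ into a cubic polynomial in $x_1,\dots,x_{12}$. Writing $x=\sum_i x_i\ell_i$ and applying the bit identity $\bigoplus_i b_i=\sum_{\emptyset\neq S}(-2)^{|S|-1}\prod_{i\in S}b_i$ coordinatewise (with $b_i=x_i(\ell_i)_c$, summed over coordinates $c$), one obtains, since every $|S|\geq 4$ term is divisible by $8$,
$$
|x|\equiv\sum_i x_i|\ell_i|-2\sum_{i<j}x_ix_j\,|\ell_i\cap\ell_j|+4\!\!\sum_{i<j<k}\!\!x_ix_jx_k\,|\ell_i\cap\ell_j\cap\ell_k|\pmod 8 .
$$
Dividing by $4$ presents $\a$ as a combination of the monomials $x_i$, $x_ix_j$, $x_ix_jx_k$ with coefficients $\tfrac14|\ell_i|$, $\tfrac12|\ell_i\cap\ell_j|$ and $|\ell_i\cap\ell_j\cap\ell_k|$ read modulo~$2$. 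It then remains to read off these single, double and triple intersection numbers from $G$. Here I would exploit the quasi-cyclic structure: $\ell_1,\dots,\ell_{11}$ are cyclic shifts of one another (the reason \eqref{PLGF} has indices taken modulo~$11$) while $\ell_{12}$ is the distinguished almost-all-ones row, so one only needs $|\ell_1\cap\ell_{1+a}|$ and $|\ell_1\cap\ell_{1+a}\cap\ell_{1+b}|$ for each relative distance, plus the intersections involving $\ell_{12}$, and propagates by the $\Z/11$-symmetry. Matching coefficients against \eqref{PLGF} completes the argument.

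\textbf{Main obstacle and a caveat.} The principal difficulty is this final bookkeeping: carrying out the intersection count for the Golay matrix and checking that the cyclic pattern collapses to $x_ix_{i+1}(x_{i+5}+x_{i+8}+x_{i+9})+x_ix_{i+2}(x_{i+6}+x_{i+8})$ together with the $x_{12}$-part. A secondary point to handle with care is the linear ambiguity: one has $\tfrac14|\ell_{12}|=3\equiv 1$, so the quarter-weight contains a linear term $x_{12}$ absent from \eqref{PLGF}; thus $\a_G$ represents the generating function only up to the $1$-cocycle $x_{12}$. By Lemma~\ref{unipro} this additive linear term changes neither $\b$, nor $\phi$, nor the loop, so one records that \eqref{PLGF} is the chosen (linear-free) representative and the proof is unaffected.
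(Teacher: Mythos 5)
Your argument is sound and reaches the statement by a genuinely different route in its first half. The paper proceeds abstractly: it observes that $\phi=\d f=|x\cap y\cap z|$ is symmetric, invokes Theorem \ref{AlphMainTh} to get \emph{existence} of a generating function, bounds its degree by Proposition \ref{AlpDeg3}, and then determines the quadratic and cubic coefficients one monomial at a time from the rules ``$x_ix_j$ occurs iff $u_i,u_j$ anticommute'' and ``$x_ix_jx_k$ occurs iff $u_i,u_j,u_k$ antiassociate''. You instead exhibit the closed-form candidate $\a(x)=\tfrac14|x|\bmod 2$ and verify directly, via $|x+y|=|x|+|y|-2|x\cap y|$ and the triple inclusion--exclusion identity together with double-evenness, that $\d\a=\half|x\cap y|$ and $\d_2\a=|x\cap y\cap z|$; your mod-$8$ expansion of the quarter-weight then produces exactly the same coefficients ($\tfrac12|\ell_i\cap\ell_j|$ and $|\ell_i\cap\ell_j\cap\ell_k|$ read mod $2$) that the paper extracts from the anticommutation/antiassociation criteria, so the two methods converge to the identical endgame. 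What your approach buys is an intrinsic description of the generating function ($\a=\tfrac14|\cdot|$ on any doubly even code, not just the Golay code) and a uniform derivation of the polynomial coefficients; what it costs is having to justify the division by $4$ inside a congruence mod $8$, which you do correctly. Your handling of the linear discrepancy $\tfrac14|\ell_{12}|\equiv 1$ via the $1$-cocycle ambiguity of Lemma \ref{unipro} matches the paper's remark that linear terms do not affect $\b$ and $\phi$.

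One caveat: the step you label ``bookkeeping'' --- computing $|\ell_i\cap\ell_j|$ and $|\ell_i\cap\ell_j\cap\ell_k|$ from the matrix $G$ and matching against \eqref{PLGF} --- is where the specific content of the proposition actually lives, and you have not carried it out. The paper is equally terse here (it records only sample values such as $|\ell_{12}\cap\ell_i|=6$ and asserts the result), so this does not put you behind the paper's own proof, but be aware that until those intersection numbers are tabulated and shown to collapse under the $\Z/11$ cyclic symmetry to the stated pattern, neither argument has verified the particular polynomial \eqref{PLGF}, only the framework that determines it.
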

\begin{proof}
The ternary function 
$$
\phi(x,y,z)=\d{}f(x,y,z)= |x\cap y\cap z|
$$
is obviously symmetric in $x,y,z$.
Theorem \ref{AlphMainTh} then implies the existence of a
generating function $\a_G$. 
By Proposition \ref{AlpDeg3} we know that $\a_G$ is a polynomial
of degree $\leq3$. 
Moreover, linear terms in $\a_G$ do not contribute in the quasialgebra structure
($i.e$ do not contribute in the expression of $\b$ and $\phi$, see \eqref{Genalp1}).
To determine the quadratic and cubic terms, we use the following equivalences
$$
\a_G \text{ contains the term } x_ix_j, i\not=j
\Longleftrightarrow u_i, u_j \text{ anti-commute,}
$$
$$
\a_G \text{ contains the term } x_ix_jx_k, i\not=j\not=k
\Longleftrightarrow u_i, u_j, u_k \text{ anti-associate}.
$$
For instance, the construction of the Parker loop gives that 
$u_i$ and $u_j$ commute for all $1\leq i,j \leq11$, since
$|\ell_{i}\cap \ell_j|=8$, for $1\leq i,j \leq11$. 
Thus, $\a_G$ does not contain any of the quadratic terms $x_ix_j$,  $1\leq i\not=j \leq11$.
But, $u_{12}$ anti-commutes with $u_i$, $i \leq11$, 
since $|\ell_{12}\cap \ell_i|=6$, $i \leq11$. 
So that  the terms $x_{12}x_i$, $ i \leq11$, do appear in the expression of $\a_G$.
Similarly, one has to determine which one of the triples $u_i, u_j, u_k$ anti-associate
to determine the cubic terms in $\a_G$. 
This yields to the expression \eqref{PLGF}
\end{proof}

The explicit formula for the factor set $f$ in coordinates on $(\Z_2)^{12}$
is immediately obtained by (\ref{Fakset}).
Note that the signature in this case is $(11,1)$, so that we have to add
$x_{12}y_{12}$ to~(\ref{Fakset}).

\begin{rem}
The difference between the loops generated by the basis elements of
$\bbO_n$ and $\bbM_n$ and the Parker loop is that the function
\eqref{PLGF} is not $\gS_n$-invariant.
Our classification results cannot be applied in this case.
\end{rem}

We hope that the notion of generating function can be a useful
tool for study of code loops.

%%%%%%%%%%%%%%%%%%%%%%%%%%%%%%%%%
%%%%%%%%%%%%%%%%%%%%%%%%%%%%%%%%%
\section{Appendix: linear algebra and differential calculus over $\Z_2$}
%%%%%%%%%%%%%%%%%%%%%%%%%%%%%%%%%
%%%%%%%%%%%%%%%%%%%%%%%%%%%%%%%%%

The purpose of this Appendix is to relate the algebraic problems we
study to the general framework of linear algebra over $\Z_2$
which is a classical domain.

\bigskip

%%%%%%%%%%%%%%%%%%%%%%%%%%%%%%%%%
\paragraph{\bf Automorphisms of $(\Z_2)^n$ and linear equivalence.}
%%%%%%%%%%%%%%%%%%%%%%%%%%%%%%%%%

All the algebraic structures on $(\Z_2)^n$ we consider are invariant
with respect to the action of the group automorphisms
$$
\mathrm{Aut}((\Z_2)^n)\cong\GL(n,\Z_2).
$$
For instance, the generating function $\a:(\Z_2)^n\to\Z_2$, as well as
$\b$ and $\phi$, are considered up to the $\mathrm{Aut}((\Z_2)^n)$-equivalence
(called ``congruence'' in the classic literature \cite{Alb}).

\bigskip

%%%%%%%%%%%%%%%%%%%%%%%%%%%%%%%%%
\paragraph{\bf Quadratic forms.}
%%%%%%%%%%%%%%%%%%%%%%%%%%%%%%%%%

The interest to describe an algebra in terms of a generating function
can be illustrated in the case of the Clifford algebras.

There are exactly  two non-equivalent
non-degenerate quadratic forms on $(\Z_2)^{2m}$ with coefficients in $\Z_2$
(see \cite{Alb,Dieu} for the details):
\begin{equation}
\label{Darboux}
\a(x)=
x_1x_{m+1}+\cdots+x_mx_{2m}+\l\,(x_m^2+x_{2m}^2),
\end{equation}
where $\l=0$ or $1$.
Note that sometimes the case $\l=1$ is not considered (see \cite{KMRT}, p.xix)
since the extra term is actually linear, for $x_i^2=x_i$.
The corresponding polar bilinear form $\b=\d\a$
and the trilinear form $\phi=\d_2\a$ do not depend on $\l$.
The corresponding twisted group algebra is isomorphic to the
Clifford algebra $\Cl_n$.

The normal form (\ref{Darboux}) is written in the standard Darboux basis,
this formula has several algebraic corollaries.
For instance, we immediately obtain the well-known factorization of the complex Clifford algebras:
$$
\Cl_{2m}\cong
\Cl_2^{\otimes{}m}\cong
\C[2^m],
$$
where $\C[2^m]$ are $(2^m\times2^m)$-matrices.
Indeed, the function (\ref{Darboux}), with $\l=0$, is nothing
but the sum of $m$ generating functions of $\Cl_2$.
The other classical symmetry and periodicity theorems for the Clifford algebras
can also be deduced in this way.

Let us mention that bilinear forms over $\Z_2$ is still an interesting subject~\cite{Leb}.

\bigskip

%%%%%%%%%%%%%%%%%%%%%%%%%%%%%%%%%
\paragraph{\bf Cubic polynomials.}
%%%%%%%%%%%%%%%%%%%%%%%%%%%%%%%%%

In this paper, we were led to consider polynomials $\a:(\Z_2)^n\to\Z_2$ of degree~3:
$$
\a(x)=\sum_{i<j<k}\a^3_{ijk}\,x_ix_jx_k+
\sum_{i<j}\a^2_{ij}\,x_ix_j,
$$
where $\a^3_{ijk}$ and $\a^2_{ij}$ are arbitrary coefficients
(equal to 1 or 0).
It turns out that it is far of being obvious to understand what means $\a$ is ``non-degenerate''.

To every polynomial $\a$, we associate a binary function $\b=\d\a$ and a trilinear form
$\phi=\d_2\a$, see formula (\ref{Genalp2}),
which is of course just the polarization (or linearization) of~$\a$.
The form $\phi$ is alternate:
$\phi(x,x,.)=\phi(x,.,x)=\phi(.,x,x)=0$ and depends only on the
homogeneous part of degree 3 of $\a$, i.e., only on $\a^3_{ijk}$.
There are three different ways to understand the notion of non-degeneracy.

(1)
The most naive way: $\a$ (and $\phi$) is non-degenerate
if for all linearly independent $x,y\in(\Z_2)^n$, the linear
function $\phi(x,y,.)\not\equiv0$.
One can show that, with this definition,
{\it  there are no non-degenerate cubic forms on $(\Z_2)^{n}$
for $n\geq3$.}
This is of course not the way we take.

(2)
The second way to understand non-degeneracy is as follows.
The trilinear map $\phi$ itself defines an $n$-dimensional algebra.
Indeed, identifying $(\Z_2)^n$ with its dual space,
the trilinear function $\phi$ 
defines a product $(x,y)\mapsto\phi(x,y,.)$.
One can say that $\phi$ (and $\a$) is non-degenerate
if this algebra is simple.
This second way is much more interesting and is related to
many different subjects.
For instance, classification of simple Lie (super)algebras over $\Z_2$
is still an open problem, see \cite{BGL} and references therein.
This definition also depends only on the
homogeneous part of degree 3 of $\a$.

(3)
We understand non-degeneracy yet in a different way.
We say that $\a$ is non-degenerate if for
all linearly independent $x,y$ there exists $z$ such that
$$
\b(x,z)\not=0,
\qquad
\b(y,z)=0,
$$
where $\b=\d\a$.
This is equivalent to the fact that the algebra with the generated
function~$\a$ is simple, cf. Section~\ref{ProoSimProp}.

We believe that every non-degenerate (in the above sense) polynomial of degree 3
on~$(\Z_2)^{n}$ is equivalent to one of the two forms (\ref{NashAlp}) and (\ref{NashAlpBis}).
Note that a positive answer would imply the uniqueness results of Section \ref{UniqReSec}
without the $\gS_n$-invariance assumption.

\bigskip

%%%%%%%%%%%%%%%%%%%%%%%%%%%%%%%%%
\paragraph{\bf Higher differentials.}
%%%%%%%%%%%%%%%%%%%%%%%%%%%%%%%%%

Cohomology of abelian groups with coefficients in $\Z_2$ is
a well-known and quite elementary theory explained in many
textbooks.
Yet, it can offer some surprises.

Throughout this work, we encountered
and extensively used the linear operators $\d_k$,
for $k=1,2,3$, cf. (\ref{Genalp2}) and~(\ref{DelTrequat}),
that associate a $k$-cochain on $(\Z_2)^n$ to a function.
These operators were defined in \cite{War}, 
and used in the Moufang loops theory, \cite{Gri,DV,NV}.
Operations of this type are called \textit{natural} or \textit{invariant}
since they commute with the action of $\mathrm{Aut}((\Z_2)^n)$.
The operator $\d_k$ fits to the usual understanding of ``higher derivation'' since
the condition $\d_k\a=0$ is equivalent to the fact that $\a$
is a polynomial of degree $\leq{}k$.

The cohomological meaning of $\d_k$ is as follows.
In the case of an abelian group $G$, the cochain complex with coefficients in $\Z_2$
has a richer structure.
There exist $k$ natural operators acting from $C^k(G;\Z_2)$ to $C^{k+1}(G;\Z_2)$ :

 \SelectTips{eu}{12}%
 \xymatrix{
&C^1(G;\Z_2)\ar@{->}[rr]^\d
&&C^2(G;\Z_2) \ar@<6pt>[rr]^{\d_{1,0}} \ar@<-8pt>[rr]^{\d_{0,1}}
&&C^3(G;\Z_2) \ar@<13pt>[rr]^{\d_{1,0,0}} \ar@<-13pt>[rr]^{\d_{0,0,1}} \ar@{->}[rr]^{\d_{0,1,0}}&&
\;\cdots\\ 
}
\medskip
\noindent
where $\d_{0,\ldots,1,\ldots,0}$ is a ``partial differential'', i.e., the differential with respect to
one variable.
For instance, if $\b\in C^2(G;\Z_2)$ is a function in two variables, then
$$
\d_{1,0}\b(x,y,z)=\b(x+y,z)+\b(x,z)+\b(y,z).
$$
In this formula $z$ is understood as a parameter and one can write
$\d_{1,0}\b(x,y,z)=\d\g(x,y)$, where $\g=\b(.,z)$.
At each order one has
$$
\d=\d_{1,0,\ldots,0}+\d_{0,1,\ldots0}+\cdots+\d_{0,\ldots,0,1}.
$$

If $\a\in{}C^1(G;\Z_2)$, then an \textit{arbitrary} sequence of the partial derivatives
gives the same result: $\d_k\a$, for example one has
$$
\d_2\a=\d_{1,0}\circ\d\a=\d_{0,1}\circ\d\a,
\qquad
\d_3\a=\d_{1,0,0}\circ\d_{1,0}\circ\d\a=\cdots=\d_{0,0,1}\circ\d_{0,1}\circ\d\a,
$$
etc.
The first of the above equations corresponds to the formula
\eqref{PhiBet} since $\b=\d\a$.

\bigskip

%%%%%%%%%%%%%%%%%

\end{document}